\theoremstyle{plain}
\newtheorem{thm}{Theorem}[section]
\newtheorem{lem}[thm]{Lemma}
\newtheorem{prop}[thm]{Proposition}
\theoremstyle{definition}
\newtheorem{defn}[thm]{Definition}
\newtheorem{rem}[thm]{Remark}
\newtheorem{example}[thm]{Example}
\numberwithin{equation}{section}
\newcommand{\res}{\mathop{\hbox{\vrule height 7pt width .5pt depth 0pt
\vrule height .5pt width 6pt depth 0pt}}\nolimits}
\newcommand{\N}{\mathbb N} 
\newcommand{\R}{\mathbb R} 
\newcommand{\wto}{\rightharpoonup}
\newcommand{\wsto}{\stackrel{*}{\rightharpoonup}}
\newcommand{\e}{\varepsilon}
\newcommand{\LL}{{\mathcal L}}
\newcommand{\HH}{{\mathcal H}}
\newcommand{\M}{{\mathcal M}}
\newcommand{\A}{{\Acal}}
\newcommand{\restrict}{\begin{picture}(10,8)\put(2,0){\line(0,1){7}}\put(1.8,0){\line(1,0){7}}\end{picture}}
\newcommand{\Crm}{\mathrm{C}}
\newcommand{\Hrm}{\mathrm{H}}
\newcommand{\Lrm}{\mathrm{L}}
\newcommand{\Acal}{\mathcal{A}}
\newcommand{\Dcal}{\mathcal{D}}
\newcommand{\Lcal}{\mathcal{L}}
\newcommand{\Ocal}{\mathcal{O}}
\newcommand{\Rcal}{\mathcal{R}}
\newcommand{\Abb}{\mathbb{A}}
\newcommand{\Cbb}{\mathbb{C}}
\DeclareMathOperator{\dist}{dist}
\DeclareMathOperator{\supp}{supp}
\DeclareMathOperator{\diag}{diag}
\DeclareMathOperator{\Glim}{\Gamma\mathrm{-lim}}
\newcommand{\ee}{\mathrm{e}}
\newcommand{\ii}{\mathrm{i}}
\newcommand{\set}[2]{\left\{\, #1 \ \ \textup{\textbf{:}}\ \ #2 \,\right\}}
\newcommand{\setb}[2]{\bigl\{\, #1 \ \ \textup{\textbf{:}}\ \ #2 \,\bigr\}}
\newcommand{\setB}[2]{\Bigl\{\, #1 \ \ \textup{\textbf{:}}\ \ #2 \,\Bigr\}}
\newcommand{\setBB}[2]{\biggl\{\, #1 \ \ \textup{\textbf{:}}\ \ #2 \,\biggr\}}
\newcommand{\norm}[1]{\|#1\|}
\newcommand{\normb}[1]{\bigl\|#1\bigr\|}
\newcommand{\abs}[1]{|#1|}
\newcommand{\absBB}[1]{\biggl|#1\biggr|}
\newcommand{\altnorm}[1]{{\left\vert\kern-0.25ex\left\vert\kern-0.25ex\left\vert #1 \right\vert\kern-0.25ex\right\vert\kern-0.25ex\right\vert}}
\newcommand{\dprb}[1]{\bigl\langle #1 \bigr\rangle}
\newcommand{\cl}[1]{\overline{#1}}
\newcommand{\di}{\mathrm{d}}
\newcommand{\dd}{\;\mathrm{d}}
\newcommand{\toup}{\uparrow}
\newcommand{\todown}{\downarrow}
\newcommand{\BigO}{\mathrm{\textup{O}}}
\newcommand{\SmallO}{\mathrm{\textup{o}}}
\newcommand{\sbullet}{\begin{picture}(1,1)(-0.5,-2.5)\circle*{2}\end{picture}}
\newcommand{\frarg}{\,\sbullet\,}
\def\XXint#1#2#3{{\setbox0=\hbox{$#1{#2#3}{\int}$} 
\vcenter{\hbox{$#2#3$}}\kern-.5\wd0}}
\DeclareMathOperator{\tr}{tr}
\DeclareMathOperator{\Id}{Id}
\DeclareMathOperator{\dive}{div}
\DeclareMathOperator{\cof}{cof}
\newcommand{\Mstwo}{\mathbb{M}^{2{\times}2}_{\mathrm{sym}}}
\newcommand{\Msn}{\mathbb{M}^{n{\times}n}_{\mathrm{sym}}}
\begin{document}
 
\title[Shape optimization of light structures]{Shape optimization of light structures and the vanishing mass conjecture}

\author[J.-F. Babadjian]{Jean-Fran\c cois Babadjian}
\address[J.-F. Babadjian]{Universit\'e Paris-Saclay, CNRS, Laboratoire de math\'ematiques d'Orsay, 91405, Orsay, France}
\email{jean-francois.babadjian@universite-paris-saclay.fr}

\author[F. Iurlano]{Flaviana Iurlano}
\address[F. Iurlano]{Sorbonne Universit\'e, CNRS, Universit\'e de Paris, Laboratoire Jacques-Louis Lions, F-75005 Paris, France}
\email{iurlano@ljll.math.upmc.fr}

\author[F. Rindler]{Filip Rindler}
\address[F. Rindler]{Mathematics Institute, University of Warwick, Coventry CV4 7AL, UK.}
\email{F.Rindler@warwick.ac.uk}

\subjclass[2010]{}

\keywords{}

\begin{abstract}
This work proves rigorous results about the vanishing-mass limit of the classical problem to find a shape with minimal elastic compliance. Contrary to all previous results in the mathematical literature, which utilize a soft mass constraint by introducing a Lagrange multiplier, we here consider the hard mass constraint. Our results are the first to establish the convergence of approximately optimal shapes of (exact) size $\e \todown 0$ to a limit generalized shape represented by a (possibly diffuse) probability measure. This limit generalized shape is a minimizer of the limit compliance, which involves a new integrand, namely the one conjectured by Bouchitt\'e in 2001 and predicted heuristically before in works of Allaire \& Kohn and Kohn \& Strang from the 1980s and 1990s. This integrand gives the energy of the limit generalized shape understood as a fine oscillation of (optimal) lower-dimensional structures. Its appearance is surprising since the integrand in the original compliance is just a quadratic form and the non-convexity of the problem is not immediately obvious. In fact, it is the interaction of the mass constraint with the requirement of attaining the loading (in the form of a divergence-constraint) that gives rise to this new integrand. We also present connections to the theory of Michell trusses, first formulated in 1904, and show how our results can be interpreted as a rigorous justification of that theory on the level of functionals in both two and three dimensions, settling this open problem. Our proofs rest on compensated compactness arguments applied to an explicit family of (symmetric) $\dive$-quasiconvex quadratic forms, computations involving the Hashin--Shtrikman bounds for the Kohn--Strang integrand, and the characterization of limit minimizers due to Bouchitt\'e \& Buttazzo.

\medskip

\noindent\textsc{Keywords:} Shape optimization, elasticity, Kohn--Strang functional, compensated compactness, variational problems with PDE constraints.

\medskip

\noindent\textsc{Date:} \today
\end{abstract}

\maketitle



\section{Introduction}

One of the main questions in the theory of shape optimization is the following (see, e.g.,~\cite{Allaire02book,Bouchitte03} for an introduction):
\begin{quote}
Given a bounded Lipschitz domain $\Omega \subset \R^n$ ($n = 2,3$), what is the optimal shape (Lipschitz subdomain) $\omega \subset \Omega$ of prescribed volume $\LL^n(\omega) = \e$ that minimizes a given cost functional?
\end{quote}
Here and in the sequel, a Lipschitz domain is understood to be an open, connected set with Lipschitz-regular boundary. A common choice for the cost functional is the \emph{elastic compliance}~\cite[Section~4.2.1]{Allaire02book}
\begin{equation}\label{eq:hatc}
  \hat{c}(\omega) := -\min_{\hat v \in \Hrm^1(\R^n;\R^n) /\Rcal} \left\{ \int_\omega j(e(\hat v))  \dd x -  \dprb{ f,\hat v } \right\},
  \end{equation}
where $\Hrm^1(\R^n;\R^n) /\Rcal$ denotes the quotient space of $\Hrm^1(\R^n;\R^n)$ by the rigid body motions $\Rcal$, i.e., the affine maps $x \mapsto r(x)=Sx+b$ with $S\in \mathbb M^{n \times n}_{\rm skew}$ a skew-symmetric matrix and $b \in \R^n$. The integrand $j$ is a quadratic form, $j(\xi) = \frac12 \Cbb \xi : \xi$ for symmetric matrices $\xi \in \Msn$, with $\Cbb$ a positive definite and symmetric 4'th-order tensor, which describes the material response as a function of the linearized strain
\[
  e(\hat v):=\frac12(\nabla\hat v+\nabla\hat v^T)
\]
of a displacement $\hat v \in \Hrm^1(\R^n;\R^n)$. Furthermore, $f \in \Hrm^{-1}(\R^n;\R^n)$ is an external loading compactly supported in $\overline \Omega$ and satisfying the balance condition
\[
  \dprb{ f,r }=0  \quad\text{for all $r \in \Rcal$.}
\]
Note that this means that $f$ can itself be written as the divergence of some symmetric matrix field, see Proposition \ref{prop:W1pR*}.

It might happen that $\hat c(\omega)=+\infty$ in~\eqref{eq:hatc} if, e.g., the support of $f$ is not contained in the closure $\overline\omega$ of $\omega$ (so that $\omega$ cannot support the loading $f$). For that reason, as in~\cite[Section 4.2.1]{Allaire02book}, we assume for simplicity that the load
\begin{equation}\label{eq:forces}
f\in\Lrm^2(\partial\Omega;\R^n)
\end{equation}
is concentrated on $\partial\Omega$ and that every admissible shape $\omega$ satisfies
\[
  \partial\Omega\subset\partial\omega.
\]
We thus define for $0 < \e < \LL^n(\Omega)$ the set of \emph{admissible shapes} as follows:
\begin{equation}  \label{eq:Ae}
  \A_\e := \setb{ \omega \subset \Omega }{ \text{$\omega$ Lipschitz domain, $\partial\Omega\subset\partial\omega$, and $\LL^n(\omega) = \e$} }.
\end{equation}
In particular, we require $\omega$ to be connected (which is natural from the point of view of applications). Also observe that competitors in~\eqref{eq:hatc} can then be taken in the space $\Hrm^1(\omega;\R^n)/\Rcal$ instead of $\Hrm^1(\R^n;\R^n)/\Rcal$.

An application of the Direct Method in the Calculus of Variations together with Korn's inequality (see~\cite{Nitsche81,Horgan95,Temam84book}) shows that for each $\omega \in \A_\e$ the minimization problem~\eqref{eq:hatc} has a unique solution $u \in \Hrm^1(\omega;\R^n)/\Rcal$, which satisfies
$$-\dive (\Cbb e(u)\chi_\omega)=f \qquad \text{in }\Dcal'(\R^n;\R^n),$$
where $\chi_\omega$ is the characteristic function of the set $\omega$, i.e.,\ $\chi_\omega(x) = 1$ if $x \in \omega$ and $0$ otherwise, and ``$\dive$'' is the row-wise (distributional) divergence. This means that $u$ is a weak solution of
\[
  \left\{\begin{aligned}
-\dive (\Cbb e(u)) &=0 \quad\text{in }\omega,\\
\Cbb e(u)\nu &=0  \quad\text{on }\partial\omega\setminus\partial\Omega,\\
\Cbb e(u)\nu &=f \quad\text{on }\partial\Omega,
  \end{aligned}\right.
\]
where $\nu$ denotes the unit outer normal to $\partial \omega$.

The shape optimization question corresponds to finding the ``stiffest'' (least compliant) material shape $\omega \in \A_\e$ under the given side conditions. This question is of tremendous practical importance since in a huge number of engineering problems one is tasked with finding an optimal shape of a component. In fact, recent innovations in the engineering space, such as additive manufacturing (e.g., ``3D printing''), have opened up a new level of freedom in shaping materials and thus only increased the importance of this task.

In many applications of shape optimization, the amount of material to be distributed is small relative to the size of the surrounding domain. Thus, immediately the question arises how the optimization behaves in the \emph{vanishing mass limit} $\e \todown 0$. It is clear that in order to investigate this question, one needs to broaden the notion of ``shape'' to also encompass representations of sequences of sets that exhibit fine oscillations or concentrations on (Lebesgue-)nullsets. Then, it is in particular unclear what an optimal ``displacement'' should be. In fact, as is well known in the theory of shape optimization (cf.~\cite[Section~4.2]{Allaire02book}), there may be no limit displacement in the classical sense. In order to arrive at a mathematically tractable formulation, one therefore first dualizes the problem and switches to the stress formulation. Indeed, standard arguments in convex duality theory (see, e.g.,~\cite[Proposition~VI.2.3]{EkelandTemam76book}) allow us to rewrite the minimization in the definition of $\hat c(\omega)$ using the Legendre--Fenchel convex conjugate
\[
  j^*(\tau) := \sup_{\xi \in \Msn} \bigl\{ \xi:\tau - j(\xi) \bigr\},  \qquad \tau \in \Msn,
\]
of $j$ as follows:
\begin{align*}
&\min_{\hat v \in \Hrm^1(\omega;\R^n)/\Rcal} \left\{ \int_\omega j(e(\hat v))  \dd x -  \int_{\partial\Omega}f\cdot \hat v\dd\HH^{n-1}\right\}\\
&\qquad = \min_{\hat v \in \Hrm^1(\omega;\R^n)/\Rcal} \; \max_{\hat \sigma \in \Lrm^2(\omega;\Msn)}\biggl\{\int_\omega \hat \sigma:e(\hat v) \dd x \\
&\qquad\qquad\qquad\qquad\qquad - \int_\omega j^*(\hat \sigma)  \dd x -\int_{\partial\Omega} f\cdot \hat v\dd\HH^{n-1}  \biggr\}\\
&\qquad = \max_{\hat \sigma \in \Lrm^2(\omega;\Msn)} \; \min_{\hat v \in\Hrm^1(\omega;\R^n)/\Rcal} \biggl\{\int_\omega \hat \sigma:e(\hat v) \dd x \\
&\qquad\qquad\qquad\qquad\qquad - \int_\omega j^*(\hat \sigma)  \dd x-\int_{\partial\Omega} f\cdot \hat v\dd\HH^{n-1}  \biggr\} .
\end{align*}
Thus,
\[
\min_{\hat v \in \Hrm^1(\omega;\R^n)/\Rcal} \left\{ \int_\omega j(e(\hat v))  \dd x -  \int_{\partial\Omega}f\cdot \hat v\dd\HH^{n-1}\right\}
=\max_{\hat \sigma \in \Sigma(\omega)} \left\{ - \int_\omega j^*(\hat \sigma)  \dd x\right\},
\]
where $\Sigma(\omega)$ is the set of all \emph{statically admissible stresses}, that is, those $\hat \sigma \in \Lrm^2(\omega;\Msn)$ such that 
\[
\int_\omega \hat \sigma:e(\hat v) \dd x  - \int_{\partial\Omega} f\cdot \hat v\dd\HH^{n-1} =0, \qquad \hat v \in\mathrm{ H}^1(\omega;\R^n)/\Rcal.
\]
Note that since $\omega$ has a Lipschitz boundary, $\hat \sigma \in \Sigma(\omega)$ if and only if (recall~\eqref{eq:forces})
$$  -\dive(\hat \sigma \chi_\omega)=f \quad\text{in $\Dcal'(\R^n;\R^n)$,} $$
that is, in the sense of distributions on $\R^n$. The compliance $\hat{c}(\omega)$ of a shape $\omega \in \A_\e$ can thus equivalently be written as
\[
  \hat{c}(\omega) = \min_{\hat \sigma \in \Lrm^2(\omega;\Msn)} \set{ \int_\omega j^*(\hat \sigma)  \dd x }{ -\dive(\hat \sigma \chi_\omega)=f \text{ in } \Dcal'(\R^n;\R^n) }.
\]
Again, it can be immediately checked via an application of the Direct Method that this minimization problem is well-posed.

In this formulation we can now investigate the vanishing-mass limit $\e \todown 0$. Upon observing that (see~\cite[Theorem~2.3~(i)]{BouchitteButtazzo01})
\[
  \inf \, \setb{ \hat{c}(\omega) }{ \omega \in \A_\e } \sim \frac{1}{\e} \qquad\text{as $\e \todown 0$},
\]
we need to rescale the problem by writing $\hat \sigma=\e^{-1}\sigma$, such that the compliance becomes
\[
  \hat{c}(\omega)=\frac{1}{\e}c_\e(\omega),
\]
where
\[
  c_\e(\omega):=\min_{\sigma \in \Lrm^2(\omega;\Msn)} \setBB{ \int_\Omega j^*(\sigma) \frac{\chi_\omega}{\e}  \dd x }{ -\dive \left(\sigma\frac{\chi_\omega}{\e}\right)=f \text{ in }\Dcal'(\R^n;\R^n) }.
\]
Then, for every probability measure $\mu \in \M^1(\overline\Omega)$, we define the functional
\[
  \mathscr C_\e(\mu):=
\begin{cases}
c_\e(\omega) & \text{if }\mu=\frac{1}{\e}\LL^n\res \omega \text{ for some } \omega \in \A_\e,\\
+\infty & \text{otherwise.}
\end{cases}
\]
Note that if $\mu=\frac{1}{\e}\LL^n \res\omega $ for some $\omega \in \A_\e$ (so, in particular, $\mu$ is a probability measure), then
\[
  \mathscr C_\e(\mu)=\min_{\sigma \in \Lrm^2(\R^n,\mu;\Msn)} \setBB{ \int_\Omega j^*(\sigma)  \dd \mu }{ -\dive \left(\sigma\mu\right)=f \text{ in }\Dcal'(\R^n;\R^n)}.
\]

In this variational framework, the pivotal problem is now to identify the variational limit of $\mathscr C_\e$, that is, to find a ``limit compliance'' $\overline{\mathscr C}$ defined on the space $\M^1(\overline\Omega)$ of probability measures such that
\[
  \inf_{\omega \in \A_\e} \mathscr C_\e \biggl( \frac{\LL^n\res \omega}{\e} \biggr)
  \to \inf_{\mu \in \M^1(\overline\Omega)} \overline{\mathscr C}(\mu)  \qquad\text{as $\e \todown 0$.}
\]
Moreover, we require that approximately minimizing shapes for $\mathscr C_\e$ converge to a limit shape (given by a probability measure) that is minimizing for $\overline{\mathscr C}$ and that all minimizing shapes for $\overline{\mathscr C}$ can be recovered as limits of approximately minimizing shapes of the functionals $\mathscr C_\e$.

It has been conjectured by Bouchitt\'e in~\cite{Bouchitte03} (and implicitly in special cases by several authors before, most notably in Kohn--Strang~\cite{KohnStrang86_1,KohnStrang86_2,KohnStrang86_3} and Allaire--Kohn~\cite{AllaireKohn93}) that the limit compliance $\overline{\mathscr C}$ involves a new integrand in place of $j^*$, even though $j^*$ is convex. Instead, Bouchitt\'e proposed
\begin{equation}\label{eq:minsigma}
\overline{\mathscr C}(\mu) = \min_{\sigma \in \Lrm^2(\R^n,\mu;\Msn)} \setBB{ \int_{\R^n} \bar j^*(\sigma)  \dd \mu }{ -\dive \left(\sigma\mu\right)=f \text{ in }\Dcal'(\R^n ;\R^n) },
\end{equation}
where the \emph{infinitesimal-mass integrand} $\bar j^*$ is defined as the convex conjugate to
\[
  \bar j(\xi) := \sup_{\substack{\tau \in \Msn\\ \det \tau = 0}} \bigl\{ \xi:\tau - j^*(\tau) \bigr\},  \qquad \xi \in \Msn.
\]
The decisive feature here is that in the definition of $\bar j(\xi)$ we only take the supremum over \emph{singular} matrices $\tau$. This should be understood as only considering ``lower-dimensional'' stress tensors, corresponding to $\mu$ being a limit of lower-dimensional structures, which is also to be expected since the (nearly) optimal shapes employed by engineers are usually a combination of sheets and rods, i.e., lower-dimensional structures. We also note that the singular symmetric matrices form the \emph{wave cone} for the divergence operator (first defined in~\cite{Murat78,Murat79,Tartar79}), see Section~\ref{sc:convex}. This fact will become important later on.

More specifically, one is interested in the isotropic and homogeneous situation, where
\[
  j(\xi) := \frac12 |\xi|^2.
\]
In this case, one can compute (see~\cite{AllaireKohn93} and Lemma \ref{lem:jbar} below) that
\[
  \bar j(\xi)=\frac12 (|\xi|^2-\xi_1^2),
\]
where for $\xi \in \Msn$ we let $\xi_1,\ldots,\xi_n$ be the eigenvalues of $\xi$ ordered as singular values, i.e., $|\xi_1|\leq \cdots \leq |\xi_n|$. Another computation shows
\begin{equation} \label{eq:jbs_elast}
  \begin{aligned}
  n=2 &: \quad \bar j^*(\tau)=\frac12 (|\tau_1| + |\tau_2|)^2,
\\
  n=3 &: \quad \bar j^*(\tau)=
\begin{cases}
\frac12 [(|\tau_1|+|\tau_2|)^2+\tau_3^2] & \text{if }|\tau_1|+|\tau_2| \leq |\tau_3|,\\
\frac14 (|\tau_1|+|\tau_2|+|\tau_3|)^2 & \text{if }|\tau_1|+|\tau_2| > |\tau_3|.
\end{cases}
  \end{aligned}
\end{equation}
For additional motivation of this integrand besides the reasoning given for the abstract $\bar j^*$, we refer to Appendix~\ref{ax:ex}, where we will exhibit the approximately optimal shape of an elastic material that leads to this integrand in the vanishing-mass limit.

The above question is also intimately connected to the task of rigorously justifying the Michell truss theory. We make this relationship explicit in Section~\ref{sc:Michell} after we have stated our main result.

There are two primary mathematical issues that make the analysis of Bouchitt\'{e}'s conjecture challenging, namely the hard mass constraint for the admissible shapes, cf.~\eqref{eq:Ae}, and the difficulty in understanding asymptotic concentrations in PDE-constrained sequences of measures. We will comment on both of these issues in turn.

First, we recall that it is common in the theory of shape optimization (and indeed in many related constrained relaxation problems as well) to replace~\eqref{eq:Ae} by a soft constraint involving a Lagrange multiplier, that is, a penalization term $\frac{\kappa}{\e} \int_\Omega \chi_\omega  \dd x$ ($\kappa > 0$) is added to the functional. While on a heuristic level this is a classical approach, see, e.g.,~\cite{Allaire02book,AllaireKohn93} for a pointwise analysis, a full justification in the framework of $\Gamma$-convergence (using a soft mass constraint) has only recently been claimed, see~\cite{Olbermann17,Olbermann20} (but see Remark~\ref{rem:terrible}). With a soft mass constraint the question turns out to be considerably more tractable, since one can now essentially minimize with respect to $\chi=\chi_\omega$ \emph{pointwise} and then compute a suitable relaxation. However, it is not clear how the formulation with the soft constraint is related rigorously to the formulation with the hard constraint. Moreover, the constant $\kappa > 0$ that controls the trade-off between the mass constraint and the elastic energy is not an intrinsic quantity of the original formulation with the hard constraint and the result depends on it. The added term $\kappa \chi/\e$ gives one uniform coercivity (of a linear, not quadratic, nature) to work with, whereas in our case, we get coercivity only on \emph{moving} domains. Some of these issues are also discussed in~\cite{BabadjianIurlanoRindler19?}, where a relaxation analysis in the framework of damage mechanics in terms of the displacement is carried out.

Second, the asymptotic concentrations allowed under the PDE constraint $-\dive (\sigma \mu) = f$ are not currently known explicitly and only partial results exist, see, e.g.,~\cite{BabadjianIurlanoRindler19?,KristensenRindler10,DePhilippisRindler17,ArroyoRabasa19?,KristensenRaita19?} and the references contained therein. It follows from the recent results in~\cite{DePhilippisRindler16,ArroyoRabasaDePhilippisHirschRindler19} that $\sigma \mu$ must be absolutely continuous with respect to the $1$-dimensional Hausdorff measure and $\det \sigma(x) = 0$ for $\mu^s$-almost every $x$, where $\mu^s$ is the singular part of $\mu$. Indeed, if we also knew that $\det \sigma(x) = 0$ for $\LL^n$-almost every $x$ in the support of $\mu$, then we could just replace $j^*$ by $\bar j^*$ since $j^*$ and $\bar j^*$ agree on singular matrices (see Lemma~\ref{lem:jbar}). However, one cannot expect that $\sigma$ is singular $\LL^n$-almost everywhere. We also refer to~\cite[Example~5.1 and Remark~5.4]{BouchitteButtazzo01} for an example of a singular measure that is optimal for the problem with $\bar j^*$, but not for that with $j^*$. Also, as~\cite[Example~2.2]{BouchitteGangboSeppecher08} shows, it could happen that an optimal measure $\mu$ with a nonzero Lebesgue-absolutely continuous part is obtained as a limit of diffuse concentrations (singular measures converging weakly* to an absolutely continuous one). No general compensated compactness theory that could be used to analyze these situations exists at present.

\subsection{Main results}

Let $\Omega \subset \R^n$, $n \in \{2,3\}$, be a bounded Lipschitz domain, i.e., a bounded, open, connected set with Lipschitz boundary. The set $\A_\e$ of admissible shapes is defined in~\eqref{eq:Ae}. We further set
\begin{align*}
  X(\Omega) &:=\setb{(\sigma,\mu) }{ \mu \in \M^1(\overline\Omega) \text{ and } \sigma \in \Lrm^2(\R^n,\mu;\Msn) }, \\
  X_\e(\Omega) &:=\setBB{(\sigma,\mu) \in X(\Omega) }{ \mu=\frac{\LL^n\res \omega}{\e} \text{ for some }\omega \in \A_\e }.
\end{align*}

In this work, we only work with the (dual) integrand $j^*\colon \Msn \to [0,\infty)$,
$$j^*(\tau) := \frac12 |\tau|^2, \qquad \tau \in \Msn,$$
and $\bar j^*$ given in~\eqref{eq:jbs_elast}. The energy functionals $\mathscr E_\e \colon X(\Omega) \to [0,\infty]$ and $\overline{\mathscr E} \colon X(\Omega) \to [0,\infty)$ are defined, for $(\sigma,\mu) \in X(\Omega)$, by
\[
  \mathscr E_\e(\sigma,\mu) := \begin{cases}
    \displaystyle \int_{\R^n} \frac12 |\sigma|^2 \dd \mu   &\text{if } (\sigma,\mu) \in X_\e(\Omega),\\
    +\infty  &\text{otherwise,}
   \end{cases}
\]
and
\[
  \overline{\mathscr E}(\sigma,\mu) := \int_{\R^n} \bar j^*(\sigma) \dd \mu.\quad \label{eq:energies2}
\]
Let $f \in \Lrm^2(\partial\Omega;\R^n)$ be a given external boundary loading satisfying 
\[
  \int_{\partial\Omega} f\cdot r \dd\HH^{n-1}=0, \qquad r \in \Rcal.
\]
In the sequel, we will identify $f$ with an element of $\Hrm^{-1}(\R^n;\R^n) \cap \M(\R^n;\R^n)$. We introduce the compliances $\mathscr C_\e, \overline{\mathscr C} \colon \M^1(\overline\Omega) \to [0,\infty]$ corresponding to the energies $\mathscr E_\e, \overline{\mathscr E}$, respectively, as
\begin{equation}\label{eq:Ceps}
\mathscr C_\e(\mu) :=  \inf \setB{ \mathscr E_\e(\sigma,\mu) }{ \sigma \in \Lrm^2(\R^n,\mu;\Msn),\; -\dive (\sigma\mu)=f }
\end{equation}
and
\begin{equation}\label{eq:barC}
\overline{\mathscr C}(\mu) :=  \inf \setB{ \overline{\mathscr E}(\sigma,\mu) }{ \sigma \in \Lrm^2(\R^n,\mu;\Msn),\; -\dive (\sigma\mu)=f }
\end{equation}
for $\mu \in \M^1(\overline\Omega)$.

The main result of this work concerns the convergence of minimum values and almost-minimizers of the $\e$-compliance $\mathscr C_\e$ to, respectively, the minimum value and a minimizer of the limit compliance $\overline{\mathscr C}$.

\begin{thm} \label{thm:conv-min}
Assume that $\Omega$ is a bounded $\Crm^2$-domain, i.e., $\Omega$ is open, bounded, connected, and the boundary $\partial \Omega$ is of class $\Crm^2$.

\begin{enumerate}[(i)]
\item {\bf Convergence of almost-minimizers.} For $\e>0$ let $\omega_\e \in \A_\e$ be such that 
\[
  \mathscr C_\e \biggl( \frac{\LL^n\res \omega_\e}{\e} \biggr) \leq \inf_{\omega\in\A_\e} \mathscr C_\e \biggl( \frac{\LL^n\res \omega}{\e} \biggr)  + \alpha_\e,  \qquad\text{where}\qquad \alpha_\e \todown 0.
\]
Then, there exists a sequence $\{\e_k\}_{k \in \N}$ with $\e_k \todown 0$ such that the probability measures
\[
  \frac{1}{\e_k}\LL^n\res\omega_{\e_k},  \qquad k \in \N,
\]
converge weakly* in $\M(\R^n)$ to a solution $\bar\mu \in \M^1(\overline\Omega)$ of
\begin{equation}\label{eq:mass-opt}
\min_{\M^1(\overline\Omega)} \overline{\mathscr C},
\end{equation} 
and
\[
  \lim_{k \to \infty} \mathscr C_{\e_k} \biggl( \frac{\LL^n\res \omega_{\e_k}}{\e_k} \biggr)
  = \overline{\mathscr C}(\bar \mu)=\min_{\M^1(\overline\Omega)} \overline{\mathscr C}.
\]

\item {\bf  Convergence of minimum values.}  Let $\mu^* \in \M^1(\overline\Omega)$ be a solution of~\eqref{eq:mass-opt}. Then, for $\e>0$, there exists $\omega^*_\e \in \A_\e$ such that 
$$\frac{1}{\e}\LL^n\res\omega^*_\e \wsto \mu^*\quad\text{ in }\M(\R^n)$$
as $\e\to 0$, and
$$\min_{\M^1(\overline\Omega)} \overline{\mathscr C}= \overline{\mathscr C}(\mu^*)=\lim_{\e \to 0}\mathscr C_\e\left(\frac{1}{\e}\LL^n\res\omega^*_\e\right)= \lim_{\e \to 0} \inf_{\omega \in \A_\e} \mathscr C_\e \biggl( \frac{\LL^n\res \omega}{\e} \biggr).$$
\end{enumerate}
\end{thm}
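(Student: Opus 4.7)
I would prove the result as a $\Gamma$-convergence-type statement for the PDE-constrained functionals $\mathscr E_\e$ on pairs $(\sigma,\mu) \in X(\Omega)$ satisfying $-\dive(\sigma\mu)=f$, combined with an existence argument for the limit problem $\min_{\M^1(\overline\Omega)}\overline{\mathscr C}$. The three ingredients are: (a) compactness of almost-minimizing sequences; (b) a liminf inequality that upgrades the integrand $j^*$ to the wave-cone conjugate $\bar j^*$; (c) a recovery sequence built from thin lower-dimensional structures. Both parts of the theorem then follow by assembling these pieces with the Direct Method applied to $\overline{\mathscr C}$.

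\textbf{Compactness and liminf.} For almost-minimizers $\omega_\e \in \A_\e$ with optimal stresses $\sigma_\e$, the scaling $\inf \mathscr C_\e = O(1)$ (seen by a simple upper bound built from a single thin tube spanning the load) together with Cauchy--Schwarz yields a uniform bound on $|\sigma_\e|\mu_\e$ in $\M(\R^n)$. After extracting a subsequence, $\mu_\e \wsto \bar\mu$ in $\M^1(\overline\Omega)$ and $\sigma_\e\mu_\e \wsto \bar\sigma\bar\mu$ for some $\bar\sigma \in \Lrm^2(\R^n,\bar\mu;\Msn)$, with $-\dive(\bar\sigma\bar\mu)=f$ passing to the limit in $\Dcal'(\R^n;\R^n)$. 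The core estimate is the liminf inequality
$$\int_{\R^n} \bar j^*(\bar\sigma)\dd\bar\mu \;\leq\; \liminf_{\e\to 0}\int_{\R^n} j^*(\sigma_\e)\dd\mu_\e.$$
Since $\bar j^* \geq j^*$ pointwise, this cannot be obtained by bare lower semicontinuity; instead, I would test $\sigma_\e\mu_\e$ against an explicit family of quadratic forms that are $\dive$-quasiconvex and saturate the Hashin--Shtrikman bound associated to the Kohn--Strang integrand. Compensated compactness for such quadratic forms then forces the asymptotic concentration of $\sigma_\e\mu_\e$ to align with the wave cone of $\dive$, namely the singular symmetric matrices, effectively converting $j^*$ into $\bar j^*$ in the limit on both the diffuse and the singular parts of $\bar\mu$.

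\textbf{Recovery sequence and conclusion.} Given a minimizer $\mu^*$ of $\overline{\mathscr C}$ with optimal stress $\sigma^*$, I would approximate $\mu^*$ in the weak* topology by piecewise one- or two-dimensional measures supported on finite unions of smooth curves and surfaces, with corresponding energies converging to $\overline{\mathscr E}(\sigma^*,\mu^*)$; the explicit structure of $\bar j^*$ in~\eqref{eq:jbs_elast} and Bouchitt\'e--Buttazzo's characterization of optimal stress-measures make this approximation concrete. Each lower-dimensional piece is thickened into a thin tube or slab of prescribed volume, and the pieces are joined by thin bridges (the $\Crm^2$-regularity of $\partial\Omega$ being used to attach cleanly at the loaded boundary), producing a connected Lipschitz $\omega_\e^* \in \A_\e$ with $\LL^n(\omega_\e^*)=\e$ exactly. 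A local elasticity computation on a thin tube/slab oriented along the relevant eigenvectors of $\sigma^*$ yields a stress $\sigma_\e^*$ whose rescaled energy converges to $\int \bar j^*(\sigma^*)\dd\mu^*$, and a diagonal argument concludes part (ii). Part (i) then follows by combining compactness, the liminf inequality applied to almost-minimizers, and the matching upper bound supplied by a recovery sequence at any limit point.

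\textbf{Main obstacle.} The principal difficulty is the liminf inequality: the hard mass constraint $\LL^n(\omega_\e)=\e$ gives no extra coercivity beyond the $\Lrm^2$-energy on moving domains, and the passage from the convex integrand $j^*$ to the strictly larger $\bar j^*$ demands quantifying how much of the $\sigma_\e$-concentration escapes the wave cone of $\dive$. Since no general compensated compactness theory for PDE-constrained measures is available in this regime, the proof must lean on the specific algebraic structure of $\dive$ on symmetric matrices and on tight quadratic Hashin--Shtrikman bounds; verifying that these bounds recover $\bar j^*$ on the entire support of $\bar\mu$, in particular on its Lebesgue-absolutely continuous part where $\bar\sigma$ need not be singular pointwise, is the most delicate step, and is also where the dimensional restriction $n\in\{2,3\}$ enters through the closed-form expressions in~\eqref{eq:jbs_elast}.
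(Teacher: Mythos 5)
Your ingredients (a) and (b) match the paper: the lower bound is indeed obtained in Proposition~\ref{prop:liminf} by pairing the rescaled stresses $\tau_\e = \sqrt{\e}\,\sigma_\e\mu_\e$ against the family of $\Lambda_{\dive}$-convex quadratic forms $Q_\xi$ of Lemma~\ref{lem:supR} and then pointwise-optimizing over $\xi$ with $\rho(\xi)\le 1$, which reproduces exactly $\bar j^*$. This part is aligned with the paper's argument.

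\textbf{Where there is a genuine gap: the recovery sequence.} Your proposed construction---approximate $\mu^*$ in weak* by measures carried by finitely many smooth curves and surfaces, thicken each into a thin tube/slab, join by thin bridges, and do a local elasticity computation to produce $\sigma_\e^*$---is not the paper's route and would run into at least three concrete obstructions that the paper explicitly identifies. First, the step ``approximate an optimal $\mu^*$ together with its stress $\sigma^*$ by lower-dimensional structures with converging energy'' presupposes structural information about the Bouchitt\'e--Buttazzo minimizers (fine regularity/rectifiability of $\lambda^*=\sigma^*\mu^*$) that is precisely what is listed among the open problems at the end of the introduction; there is no available theorem giving such an approximation for a general solution of~\eqref{eq:mass-opt}. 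Second, any ``thin-bridge'' joining of pieces has to equilibrate the load $f$ exactly; the paper points out that one cannot correct the load by solving $-\dive e(v)=f'-f$ on $\omega_\e$ at the required accuracy because the Korn--Poincar\'e constant of the thinning domains $\omega_\e$ degenerates, so your local gluing computation is not under control. (The paper's Step~1d circumvents this only because the correction $\e F_\e\chi_{\omega_\e^*}$ carries an extra factor of $\e$.) Third, and decisively, you do not use that for minimizers of $\overline{\mathscr C}$ the Lagrange multiplier $\kappa=\rho^\circ(\sigma^*)$ is a \emph{constant} function $\mu^*$-a.e.\ (Bouchitt\'e--Buttazzo,~\cite{BouchitteButtazzo01}); this is what converts the hard-mass problem into a Kohn--Strang/Michell problem to which Proposition~\ref{prop:Olbermann} (the relaxation with soft divergence constraint plus the measure-preserving mollification $\lambda\mapsto\lambda^\delta$) applies, giving an $\Lrm^2$ field $\lambda_\e^m$ whose energy converges and whose level set $\{\lambda_\e^m\neq 0\}$ supplies the shape. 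Without the constancy of $\kappa$, the quadratic-completion argument of Step~1b---which shows that the thin sets $\tilde\omega_\e^m$ necessarily concentrate on $\sqrt{m}\mu^*$ rather than on some spurious measure---has nothing to grip. In short: the paper constructs the approximating shapes from a general relaxation theorem for Kohn--Strang (Proposition~\ref{prop:KS} and~\ref{prop:Olbermann}) plus set-theoretic truncations and strut additions, \emph{not} by explicitly thickening lower-dimensional carriers, and it relies critically on the constancy of $\rho^\circ(\sigma^*)$; these are the pieces missing from your proposal.
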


Usually, the convergence of minimum values and almost-minimizers arises as a consequence of a more general $\Gamma$-convergence analysis~\cite{DalMaso93book}. Indeed, it would be interesting to know if also $\Glim_{\e \todown 0} \mathscr C_\e = \overline{\mathscr C}$ with respect to weak* convergence of measures. In fact, the $\Gamma$-lower bound follows directly from our Proposition~\ref{prop:liminf}. Hence, the question is whether one can find a recovery sequence that admits the mass constraint and loading \emph{exactly}. This seems to be a deeper issue than might appear at first sight since one cannot modify the loading from $f$ to $f'$, say, by solving a PDE of the form $-\dive e(v)=f'-f$ in $\omega$. Unfortunately, the corrector $e(v)$ is not necessarily ``small'' since the Korn--Poincar\'e constant of $\omega$ cannot be assumed to be uniformly bounded (cf.~\cite{LewickaMuller11,LewickaMuller16} and also~\cite{Nitsche81,Horgan95,Temam84book}). In fact, it seems likely that for \emph{any} recovery construction of shapes $\omega_\e$ the Korn--Poincar\'e constant tends to $+\infty$ in the limit because of the requirement that the mass vanishes in the limit. Thus, one can perhaps not expect a ``stability'' with respect to small modifications in $f$ (with respect to some suitable norm). Let us also point out that some ideas about a general upper bound have been given in~\cite[Section 3]{Bouchitte03} using the primal formulation of the compliance in suitable Sobolev spaces with respect to a measure (using techniques from~\cite{BouchitteFragalaSeppecher11}).

Our proof of Theorem~\ref{thm:conv-min} instead proceeds via a different route. For the lower bound, proved in Section~\ref{sc:lower}, we modify the integrand by subtracting a suitable (symmetric) $\dive$-quasiconvex quadratic form, which has a sign in the limit by classical compensated compactness theory (see Section~\ref{sc:CC} for this construction). Pointwise optimizing over a whole family of such (symmetric) $\dive$-quasiconvex quadratic forms, we can then pass to the limit integrand $\bar j^*$ in place of $j^*$ in the vanishing mass limit $\e \todown 0$.

For the convergence of approximate minimizers to a minimizer of the limit problem, we first investigate some properties of the Kohn--Strang functional in Section~\ref{sc:relax}. In particular, we give a proof of (the upper part of) a relaxation result for the formulation with a soft-divergence constraint, Proposition~\ref{prop:Olbermann}, which was first claimed in~\cite{Olbermann17,Olbermann20} (but see Remark~\ref{rem:terrible}). Our proof rests on a suitable smoothing procedure for measures that does not increase the support (adapted from~\cite{ErnGuermond}, see Section~\ref{s:mollification}), for which we require $\Omega$ to be a $\Crm^2$ domain, and some pointwise arguments.

In the final Section~\ref{sc:upper} we then prove Theorem~\ref{thm:conv-min}. For this, we exploit that minimizers of the limit problem have special properties by the results in~\cite{BouchitteButtazzo01}. In particular, the Lagrange multiplier $\kappa = \rho^\circ(\sigma)$ is a \emph{constant} function, which allows one to invoke Proposition~\ref{prop:Olbermann} to obtain a recovery sequence for a limit minimizer.

\subsection{Relation to Michell truss theory} \label{sc:Michell}

Michell trusses, introduced in 1904 in the seminal work~\cite{Michell04}, are a particular class of structures that can be used to approximate optimal shapes. We refer to~\cite{Allaire02book,BouchitteGangboSeppecher08,Olbermann17,Olbermann20} for an overview and a modern mathematical formulation. In this section, we explain how our results can be understood in this context.

In the literature it is often stated, usually heuristically, that the Michell truss theory should be seen as the vanishing-mass limit of the shape optimization problem for elastic materials. One can typically find a formulation that roughly corresponds to the following: Given a loading $f$, one is tasked with the minimization
\begin{equation}\label{eq:min-michell}
\min_{\lambda \in \M(\R^n;\Msn)} \mathscr F(\lambda),
\end{equation}
where the \emph{Michell functional} and the \emph{Michell integrand} are given by
\[
   \mathscr F(\lambda) := 
   \begin{cases}
    \displaystyle \int_{\overline \Omega} \rho^\circ\left(\frac{\di\lambda}{\di|\lambda|}\right)\dd |\lambda|   &\text{if }\supp(\lambda) \subset \overline \Omega, \; -\dive \lambda=f \text{ in }\Dcal'(\R^n;\R^n), \\
    +\infty  &\text{otherwise,}
   \end{cases} 
\]
and
\begin{equation} \label{eq:WMi}
  \begin{aligned}
  n=2 &: \quad \rho^\circ(\tau) := |\tau_1| + |\tau_2|, \\
  n=3 &: \quad \rho^\circ(\tau) :=
\begin{cases}
 \sqrt{(|\tau_1|+|\tau_2|)^2+\tau_3^2} & \text{if }|\tau_1|+|\tau_2| \leq |\tau_3|,\\
\frac{1}{\sqrt{2}} (|\tau_1|+|\tau_2|+|\tau_3|) & \text{if }|\tau_1|+|\tau_2| > |\tau_3|,
\end{cases}
  \end{aligned}
\end{equation}
respectively. Note that from~\eqref{eq:jbs_elast} and~\eqref{eq:WMi} we see that the functions $\rho^\circ$ and $\bar j^*$ are related by
\[
  \bar j^*=\frac12 (\rho^\circ)^2,
\]
also see Section~\ref{sc:convex} for more on the underlying convex analysis involved here. Historically, $\rho^\circ$ is only equal to the predicted Michell integrand in two dimensions, whereas the picture is more complicated in three dimensions by the inclusion of plates besides bars.

The above problem has been rigorously derived in~\cite{Olbermann17,Olbermann20} by considering the Kohn--Strang-type functionals
\[
   \mathscr F_\e(\lambda) := \int_{\Omega} h_\e(\lambda) \dd x,
\]
with
\[
h_\e(\tau) :=
\begin{cases}
\frac{\e}{2} |\tau|^2 + \frac{1}{2\e} &\text{if }\tau \neq 0,\\
0 & \text{if } \tau=0,
\end{cases}
\]
where now $1/(2\e)$ takes the role of a Lagrange multiplier for the vanishing mass constraint acting on the shape $\omega=\{\lambda\neq 0\}$ (as $\e \todown 0$). Then, one takes the (symmetric) $\dive$-quasiconvex envelope (see Section~\ref{sc:CC}) of this integrand and passes to a pointwise limit using the Hashin--Shtrikman bounds from~\cite{AllaireKohn93}, see, e.g.,~\cite[Section 4.2.3]{Allaire02book} for details. 

Note that there might be many decompositions $\lambda=\sigma\mu$ for $\mu \in \M^1(\overline\Omega)$  and $\sigma \in \Lrm^1(\R^n,\mu;\Msn)$ with the property
$$\int_{\overline \Omega} \rho^\circ\left(\frac{\di\lambda}{\di|\lambda|}\right)\dd |\lambda|  =\int_{\overline \Omega} \rho^\circ(\sigma)\dd\mu.$$
However, it is proved in~\cite[Theorem~2.3~(iii)]{BouchitteButtazzo01} that any minimizer $\mu$ for $\overline{\mathscr C}$ together with its unique associated stress $\sigma$ (solving~\eqref{eq:minsigma} for $\mu$) defines a minimizer $\lambda:=\sigma\mu$ of $\mathscr F$. Conversely, every minimizer $\lambda$ of $\mathscr F$ can be decomposed as $\lambda = \sigma\mu$ for some optimal $\sigma$ and $\mu$. In addition, the minimal values are related via\begin{equation}\label{eq:entropy-cond}
\int_{\overline \Omega} \bar j^*(\sigma)\dd\mu= \int_{\overline \Omega} \frac12 \rho^\circ(\sigma)^2\dd\mu=\overline{\mathscr C}(\mu)=\frac{\mathscr F(\lambda)^2}{2}=\frac12\left( \int_{\overline \Omega} \rho^\circ(\sigma)\dd\mu\right)^2.
\end{equation}
Thus, even though $\rho^\circ$ and our $\bar j^*= \frac12(\rho^\circ)^2$ have the same \emph{pointwise} minimizers, the different scalings in $\sigma$ cause a discrepancy in the minimizers of the corresponding compliance functionals. This point is further illustrated in Appendix~\ref{ax:ex}, which contains constructions that show how finer and finer structures indeed lead the integrand $\frac12 |\sigma|^2$ to behave like the infinitesimal-mass integrand $\bar j^*$ in the limit.

Let us illustrate this point through the following example.

\begin{example}
Consider the square $\Omega := (0,1)^2$ and the parametrized set of probability measures
\[
  \mu_\delta := \delta \HH^1 \restrict ((0,1) \times \{0\}) + (1-\delta) \HH^1 \restrict (\{0\} \times (0,1)),  \qquad \delta \in [0,1].
\]
Denoting by $\{\ee_1,\ee_2\}$ the canonical basis of $\R^2$, we set
\[
f := \ee_1 \delta_{\ee_1} + \ee_2 \delta_{\ee_2} -(\ee_1+\ee_2)\delta_0,
\]
or a suitable mollification thereof. Then, we can easily see that
\[
  \sigma_\delta := \begin{cases}
    \frac{1}{\delta} \ee_1 \otimes \ee_1 &\text{on $(0,1) \times \{0\}$,}\\
    \frac{1}{1-\delta} \ee_2 \otimes \ee_2 &\text{on $\{0\} \times (0,1)$}
  \end{cases}
\]
is the optimal stress, i.e., the unique $\sigma_\delta$ satisfying $-\dive (\sigma_\delta \mu_\delta) = f$, which here decomposes into two (distributional) ODEs on the coordinate lines and can thus be solved explicitly. Consequently, setting 
$$\lambda_\delta:=\sigma_\delta \mu_\delta=\ee_1 \otimes \ee_1 \, \HH^1 \restrict ((0,1) \times \{0\}) + \ee_2 \otimes \ee_2 \, \HH^1 \restrict (\{0\} \times (0,1)),$$
which is actually independent of $\delta$, we have
\[
\mathscr F(\lambda_\delta) = 2  \qquad\text{for all $\delta \in [0,1]$.}
\]
This means that any distribution of mass between the horizontal and vertical bars gives a Michell-optimal shape in the sense above. This is, however, clearly not physical: For two orthogonal (non-interacting) elastic bars, one expects to distribute the mass equally between the two bars to minimize the total compliance (note in particular that the endpoints $\delta = 0,1$ should have \emph{infinite} compliance since they cannot carry the loading).

Our main result, Theorem~\ref{thm:conv-min}, involving the infinitesimal-mass integrand $\bar j^*$ instead, gives the limit compliance
\[
  \overline{\mathscr C}(\mu_\delta) = \int_\Omega \frac12 \bigl(|(\sigma_\delta)_1| + |(\sigma_\delta)_2| \bigr)^2 \dd \mu_\delta
  = \frac{1}{2\delta} + \frac{1}{2(1-\delta)},
\]
which attains the minimum value $2$ at the (unique) minimizer $\mu_{1/2}$, as expected. Hence, $\overline{\mathscr C}$, and not $\mathscr F$, selects the physically correct minimizer $\mu_{1/2}$. 

Note that an application of~\cite[Theorem~2.3~(iii)]{BouchitteButtazzo01} yields that the measure
$$\mu:=\frac12 \rho^\circ(\lambda_\delta)$$
coincides with $\mu_{1/2}$ (which here is also obvious), hence it is optimal for $\overline{\mathscr C}$. As a consequence, the measure $\mu_{1/2}$ is precisely the one selected by the condition~\eqref{eq:entropy-cond} which here reads as
$$ \int_{\overline \Omega} \rho^\circ(\sigma)^2 \dd\mu= \left( \int_{\overline \Omega} \rho^\circ(\sigma) \dd\mu\right)^2.$$
This identity acts as a kind of entropy selection criterion for the physically relevant decomposition $\lambda=\sigma\mu$ of the solutions $\lambda$ of~\eqref{eq:min-michell}.
\end{example}

The previous discussion shows that any optimal pair $(\sigma,\mu)$ of our limit compliance problem~\eqref{eq:barC}--\eqref{eq:mass-opt} gives rise to a minimizer $\lambda:=\sigma\mu$ of the Michell functional. We therefore recover the same structure as in~\cite{Olbermann17,Olbermann20}. However, in the two-dimensional case $n=2$ studied in~\cite{Olbermann17}, the problem is reformulated in terms of the Airy function $\phi$, which is related to the measure $\lambda$ by the relation $D^2 \phi=\cof(\lambda)$. In this formulation, a boundary integral penalizes the cost of violating the limit boundary value of the normal derivative $\nabla \phi\cdot \nu$. We refer to~\cite[Lemma 4.5]{Olbermann17} for a discussion on the relation between the boundary conditions for $\phi$ and $\lambda$. In the case $n=2$, the formulation with the Airy function entails that singularities (in the interior) essentially come from discontinuities in the gradient of the Airy function $\phi$, so the structure of singularities can be understood more finely via the theory of maps with second derivative a measure. It is unclear at present whether similar constraints can be proved to hold in dimension $n\geq 3$.

To conclude the introduction, let us mention several interesting and challenging open questions:
\begin{itemize}
\item \textit{Understanding singularities of solutions to the minimization problem~\eqref{eq:mass-opt}:} From~\cite{ArroyoRabasaDePhilippisHirschRindler19} we know that general measures $\lambda=\sigma\mu$ satisfying $-\dive \lambda \in \M(\R^n;\R^n)$ must have a singular support carried by a set of codimension at most $n-1$. We expect a more precise statement for solutions to~\eqref{eq:mass-opt} but this would require a fine analysis of singularities in optimal configurations for our limit compliance problem.
\item \textit{Providing a full $\Gamma$-convergence result for general integrands $j$ (instead of just the quadratic one $j=\frac12|\cdot|^2$) and in arbitrary dimensions:} In~\cite{Bouchitte03} a sketch is given for a possible proof of the upper bound, using the primal problem in terms of the displacement instead of the stress. However, a uniform Korn-type inequality on sequences of domains, which would allow one to implement this strategy, seems to be missing. On the other hand, an adaptation of our algebraic and compensated compactness arguments might lead to a general lower bound inequality. 
\item \textit{Understanding diffuse concentrations:} In~\cite{Bouchitte03} a possible approach to the vanishing mass conjecture in terms of (generalized) Young measure generated by the sequence $\{\sigma_\e\mu_\e\}_{\e>0}$ is outlined. However, despite recent advances in the theory of Young measures for $\Acal$-free sequences~\cite{ArroyoRabasa19?,KristensenRaita19?}, diffuse concentrations (singular measures converging weakly* to an absolutely continuous one) remain only superficially understood. It is, however, precisely these diffuse concentrations that lie at the core of the general conjecture (as is already observed in~\cite{Bouchitte03}).
\end{itemize}

\subsection{Acknowledgements}

This project has received funding from the European Research Council (ERC) under the European Union's Horizon 2020 research and innovation programme, grant agreement No 757254 (SINGULARITY). The research of J.-F.~B.\ was supported by a public grant as part of the Investissement d'avenir project, reference ANR-11-LABX-0056-LMH, LabEx LMH. F.I.\ 
acknowledges partial support from the Mission pour les Initiatives Transverses et Interdisciplinaires (MITI) of the CNRS through the project ``CalVaMec''. The authors would like to thank Giovanni Alberti, Guido De~Philippis and Gilles Francfort for helpful discussions related to this paper. The authors are grateful to the anonymous referees for their valuable comments.

\section{Preliminaries}

\subsection{Notation}

We denote by $\mathbb M^{n \times n}$ the space of $n \times n$ matrices and by $\Msn$ and $\mathbb M^{n \times n}_{\rm skew}$ the subspaces of symmetric and skew-symmetric matrices, respectively. If $\xi, \zeta \in \mathbb M^{n \times n}$, then $\xi:\zeta:=\tr(\xi^T \zeta)$ is the Frobenius scalar product between $\xi$ and $\zeta$, and $|\xi|:=\sqrt{\xi:\xi}$ is the corresponding Frobenius norm. We recall that for \emph{symmetric} matrices $\xi \in \Msn$, one has the expression $|\xi|^2 = \xi_1^2 + \cdots + \xi_n^2$ with $\xi_1,\ldots,\xi_n$ the eigenvalues of $\xi$, which in the following we always order as singular values, i.e., $|\xi_1|\leq \cdots \leq |\xi_n|$. The tensor product and symmetric tensor product of $a,b \in \R^n$ are $a \otimes b := ab^T$ and $a \odot b := \frac12 (a \otimes b + b \otimes a)$, respectively.

The Lebesgue measure in $\R^n$ is written $\Lcal^n$ and $\HH^k$ stands for the $k$-dimensional Hausdorff (outer) measure. We denote by $\M(\R^n;\Msn)$ (respectively $\M(\R^n)$) the space of $\Msn$-valued (respectively real-valued) globally bounded Radon measures. The sets $\M^+(\R^n)$ and $\M^1(\R^n)$ contain all nonnegative bounded Radon measures and probability measures in $\R^n$. Finally, if $K \subset \R^n$ is compact, we denote by $\M(K;\Msn)$ the space of $\Msn$-valued bounded Radon measures in $\R^n$ with support contained in $K$. Corresponding definitions are used for $\M(K)$, $\M^+(K)$ and $\M^1(K)$. In the sequel, if $f\in \Lrm^1(\R^n)$, we will often identify $f$ with the absolutely continuous measure $f \LL^n$.

\subsection{Compensated compactness} \label{sc:CC}

In the theory of compensated compactness, the {\it wave cone} associated to the row-wise divergence ``$\dive$'' is defined as $\Lambda_{\dive} := \bigcup_{\lambda \neq 0} \ker \Abb(\lambda)$, where $\Abb(\lambda)M := (2\pi\ii) M\lambda$ ($M \in \Msn$, $\lambda \in \R^n$) is the (principal) Fourier symbol of $\dive$. It is given by the set of all singular symmetric matrices,
$$\Lambda_{\dive}=\setb{\sigma \in \Msn }{ \det \sigma=0 }.$$

Following~\cite{Tartar79}, a locally bounded Borel-measurable function $f \colon \Msn \to \R$ is said to be {\it $\Lambda_{\dive}$-convex} (respectively {\it $\Lambda_{\dive}$-affine}) if, for all $A \in \Msn$ and $\sigma \in \Lambda_{\dive}$, the function $t \in \R \mapsto f(A+t\sigma)$ is convex (respectively affine). Another relevant notion is that of (symmetric) $\dive$-quasiconvexity~\cite{FonsecaMuller99}, which turns out to be a necessary and sufficient condition for weak lower semicontinuity of integral functionals under a divergence-free constraint (see~\cite{FonsecaMuller99} or~\cite[Lemma~2.5]{ContiMullerOrtiz20}): We call a locally bounded Borel-measurable function $f \colon \Msn \to \R$  \emph{(symmetric) $\dive$-quasiconvex} if for all $\varphi \in \Crm^\infty_{\rm per}((0,1)^n;\Msn)$ such that $\dive \varphi=0$ in $\R^n$, it holds that
$$f\biggl(\int_{(0,1)^n} \varphi(x) \dd x\biggr) \leq \int_{(0,1)^n}f(\varphi(x)) \dd x.$$
Here, $\Crm^\infty_{\rm per}((0,1)^n;\Msn)$ is the space of all $\Msn$-valued functions that are smooth and periodic on the unit torus, i.e.,\ $(0,1)^n$ with opposite edges identified. It is well known that (symmetric) $\dive$-quasiconvexity implies $\Lambda_{\dive}$-convexity (see~\cite[Lemma~2.4]{ContiMullerOrtiz20} and also~\cite[Proposition~3.4]{FonsecaMuller99}) but that the converse implication is false in general (see~\cite{Palombaro10}). However, both (symmetric) $\dive$-quasiconvexity and $\Lambda_{\dive}$-convexity are equivalent for quadratic forms. Indeed, from~\cite[Theorem~11]{Tartar79} (or~\cite[Theorem~8.30]{Rindler18book}) we get that $\Lambda_{\dive}$-convexity implies weak* lower semicontinuity for quadratic forms, which yields the $\dive$-convexity by~\cite[Theorem~3.6]{FonsecaMuller99}.

In particular, for $n=2$, the determinant is $\Lambda_{\dive}$-affine. As a consequence, we can apply the standard theory of compensated compactness (see, e.g.,~\cite[Theorem~8.30]{Rindler18book} or~\cite[Corollary~13]{Tartar79}) which ensures that if $\{\sigma_k\}_{k\in \N}$ is a sequence in $\Lrm^2(\Omega;\Mstwo)$ and $\sigma \in \Lrm^2(\Omega;\Mstwo)$, then
\[
\left\{\begin{aligned}
\sigma_k &\wto \sigma &&\text{in }\Lrm^2(\Omega;\Mstwo),\\
\dive \sigma &\to \dive \sigma &&\text{in }\Hrm^{-1}(\Omega;\R^2)
\end{aligned}\right\}
\quad
\Longrightarrow
\quad 
\det \sigma_k \wsto \det \sigma \text{ in $\M(\Omega)$.}
\]
In fact, for $n=2$, the determinant is the only nonlinear $\Lambda_{\dive}$-affine function. This is a consequence of the fact that, if $n=2$, a matrix is singular if and only if it has rank one or zero. Therefore, the $\Lambda_{\dive}$-affine functions are precisely the rank-one affine functions, i.e., the determinant and affine functions (see~\cite[Example~5.21]{Dacorogna08book}).

In dimension $n=3$, it turns out that there are no non-zero $\Lambda_{\dive}$-affine quadratic forms on symmetric matrices as a consequence of the general result in~\cite{Murat81}. Indeed, a $\Lambda_{\dive}$-affine quadratic form $q$ must vanish on all singular matrices and, in particular, must be rank-one affine. So, it has to take the form $q(\sigma)=B:{\rm cof}(\sigma)$ for some matrix $B \in \mathbb M^{3 \times 3}$ and all $\sigma \in \mathbb M^{3 \times 3}_{\text{sym}}$ (see~\cite[Theorem~5.20]{Dacorogna08book}). Since ${\rm cof}(\sigma)$ is symmetric, we can also require $B$ to be symmetric. Using next that $q$ must vanish on rank-two matrices $\sigma$, it follows that $B_{ii}=0$ for all $1 \leq i \leq 3$ (taking $\sigma=e_i \odot e_i+e_j \odot e_j$ with $i \neq j$), and then that 
$B$ is skew-symmetric (taking $\sigma=e_i \odot e_j+e_i\odot e_k$ for all $i \neq j \neq k$), hence $q\equiv 0$.

In Lemma~\ref{lem:supR} below, we will instead introduce a class of $\Lambda_{\dive}$-convex quadratic forms that can be seen as a generalizations of Tartar's quadratic form $\tau \in \Msn \mapsto (n-1)|\tau|^2-({\rm tr\,}\tau)^2$.

\subsection{Convex analysis} \label{sc:convex}

We define the function
$$\bar j(\xi):=\sup_{\tau \in \Lambda_{\dive}}\left\{\xi : \tau - \frac12|\tau|^2 \right\}, \qquad \xi \in \Msn.$$
Clearly, $\bar j$ is positively $2$-homogeneous.  Following~\cite{Bouchitte03,BouchitteButtazzo01} we introduce the \emph{gauge function} of the convex set $\{\bar j \leq \frac12\}$, i.e., the convex, continuous, and positively one-homogeneous function $\rho \colon \Msn \to [0,\infty)$ defined by
\begin{equation}\label{eqrho}
\rho(\xi):=\inf\setBB{t>0 }{ \bar j\left(\frac{\xi}{t}\right) \leq \frac12 }, \qquad \xi \in \Msn.
\end{equation}
According to the two-homogeneity of $\bar j$, we have that
\[
  \bar j = \frac12 \rho^2.
\]
Let us further introduce the \emph{polar function} $\rho^\circ \colon \Msn\to [0,\infty)$ of $\rho$ defined by
$$\rho^\circ(\tau):=\sup_{\rho(\xi)\leq 1}\xi:\tau=\sup_{\bar j(\xi)\leq \frac12}\xi:\tau, \qquad \tau \in \Msn.$$
According to~\cite[Corollary~15.3.1]{Rockafellar70book}, for the convex conjugate $\bar j^* \colon \Msn \to [0,\infty)$ of $\bar j$, defined by
\[
  \bar j^*(\tau):=\sup_{\xi\in\Msn} \bigl\{\xi:\tau-\bar j(\xi)\bigr\}, \qquad \tau \in \Msn,
\]
we have
\[
  \bar j^*=\frac12 (\rho^\circ)^2.
\]

The following lemma collects some properties of $\bar j$ (as already established in~\cite{Bouchitte03}).
\begin{lem} \label{lem:jbar}
Let $\xi \in \Msn$ and let $\xi_1,\cdots,\xi_n$ be the eigenvalues of $\xi$, ordered as singular values, $|\xi_1|\leq\cdots\leq|\xi_n|$. Then,
\begin{equation} \label{eq:jbar_formula}
  \bar j(\xi)=\frac12 (|\xi|^2-\xi_1^2).
\end{equation}
Furthermore, for all $\tau \in \Lambda_{\dive}$,
\[
  \bar j^*(\tau) = j^*(\tau).
\]
\end{lem}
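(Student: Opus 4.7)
The plan is to recast $\bar j(\xi)$ as the (halved) squared Frobenius distance from $\xi$ to the cone $\Lambda_{\dive}$ of singular symmetric matrices, and then to compute that distance explicitly via the spectral theorem. The starting observation is that completing the square in $\tau$ yields
\[
\xi : \tau - \frac12 |\tau|^2 = \frac12 |\xi|^2 - \frac12 |\xi - \tau|^2,
\]
so that, taking the supremum over $\tau \in \Lambda_{\dive}$,
\[
\bar j(\xi) = \frac12 |\xi|^2 - \frac12 \dist(\xi, \Lambda_{\dive})^2,
\]
where $\dist$ is induced by the Frobenius norm. Hence~\eqref{eq:jbar_formula} reduces to the claim that $\dist(\xi, \Lambda_{\dive}) = |\xi_1|$.

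To prove this identity, I would argue by two inequalities. For the upper bound, apply the spectral theorem to diagonalize $\xi$ in an orthonormal eigenbasis $\{e_1, \ldots, e_n\}$ with $\xi e_i = \xi_i e_i$. Then $\tau_0 := \xi - \xi_1 \, e_1 \otimes e_1$ is symmetric with eigenvalues $0, \xi_2, \ldots, \xi_n$, so $\tau_0 \in \Lambda_{\dive}$ and $|\xi - \tau_0|^2 = \xi_1^2$. For the lower bound, pick any $\tau \in \Lambda_{\dive}$ and a unit vector $v \in \ker \tau$. Completing $v$ to an orthonormal basis $\{u_1 = v, u_2, \ldots, u_n\}$ and expanding the Frobenius norm in that basis gives
\[
|\xi - \tau|^2 = \sum_{i=1}^n |(\xi - \tau) u_i|^2 \geq |(\xi - \tau) v|^2 = |\xi v|^2.
\]
Writing $v = \sum_i \alpha_i e_i$ with $\sum_i \alpha_i^2 = 1$ yields $|\xi v|^2 = \sum_i \alpha_i^2 \xi_i^2 \geq \min_i \xi_i^2 = \xi_1^2$, which proves the lower bound and hence~\eqref{eq:jbar_formula}.

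For the second identity $\bar j^*(\tau) = j^*(\tau) = \frac12 |\tau|^2$ on $\Lambda_{\dive}$, I would again argue by two inequalities. Since the supremum defining $\bar j$ is taken over the proper subset $\Lambda_{\dive} \subsetneq \Msn$, we have $\bar j \leq j^{**} = j$ pointwise on $\Msn$, whence $\bar j^* \geq j^* = j$ everywhere by order reversal. Conversely, fix $\tau \in \Lambda_{\dive}$: the very definition of $\bar j(\xi)$ applied with test matrix $\tau$ gives $\bar j(\xi) \geq \xi : \tau - \frac12 |\tau|^2$ for all $\xi \in \Msn$, so $\xi : \tau - \bar j(\xi) \leq \frac12 |\tau|^2$; taking the supremum in $\xi$ yields $\bar j^*(\tau) \leq \frac12 |\tau|^2 = j^*(\tau)$, completing the proof.

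The substantive step is the distance computation; the only slight subtlety is the ordering of eigenvalues by absolute value rather than by real value, but the direct argument via a kernel vector above handles this without recourse to Mirsky's or Hoffman--Wielandt-type inequalities.
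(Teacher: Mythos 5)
Your proof is correct, and for the first identity it takes a genuinely different route. The paper diagonalizes $\xi$ by orthogonal invariance and then asserts (with a pointer to a Lagrange-multiplier computation for the constraint $\det\tau'=0$) that the supremum over singular $\tau'$ is attained at $\diag(0,\xi_2,\ldots,\xi_n)$; you instead complete the square to rewrite $\bar j(\xi)=\frac12|\xi|^2-\frac12\dist(\xi,\Lambda_{\dive})^2$, and then prove $\dist(\xi,\Lambda_{\dive})=|\xi_1|$ by exhibiting the competitor $\tau_0=\xi-\xi_1\,e_1\otimes e_1$ for the upper bound and using a kernel vector of $\tau$ together with $|M|^2=\sum_i|Mu_i|^2$ for the lower bound. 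Your version is self-contained and avoids the implicit appeal to first-order conditions; the paper's version is shorter once one accepts the optimization claim, and makes the formula $\bar j(\xi)=\frac12(\xi_2^2+\cdots+\xi_n^2)$ visually transparent. For the second identity, the underlying idea is the same — the defining inequality of $\bar j$ gives $\bar j^*\leq j^*$ on $\Lambda_{\dive}$, and a monotonicity/containment argument gives the reverse — but you make the two inequalities explicit (order reversal of conjugation for one, the defining sup for the other), whereas the paper collapses them into a chain of equalities $\sup_{\Msn}(\xi:\tau-\bar j(\xi))=\sup_{\Lambda_{\dive}}(\xi:\tau-j(\xi))=\sup_{\Msn}(\xi:\tau-j(\xi))$ whose first step is stated without justification. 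Notably, your argument for the second part does not invoke the explicit formula~\eqref{eq:jbar_formula} at all, whereas the paper's hinges on the observation $\bar j=j$ on $\Lambda_{\dive}$, which it derives from that formula.
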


\begin{proof}
Let $\xi = P D P^T$ with $P \in \mathrm{SO}(n)$ and $D = \diag(\xi_1,\ldots,\xi_n)$. Then,
\begin{align*}
  \bar j(\xi)
  &= \sup_{\tau \in \Lambda_{\dive}}\left\{D : (P^T\tau P) - \frac12|P^T\tau P|^2 \right\} \\
  &= \sup_{\tau' \in \Lambda_{\dive}}\left\{D : \tau' - \frac12|\tau'|^2 \right\} \\
  &= \bar j(D).
\end{align*}
The expression in the brackets is maximized for $\tau' = \diag(0,\xi_2,\ldots,\xi_n)$ (this can be seen in an elementary way by adding a Lagrange multiplier for the constraint $\det(\tau') = 0$). Then,
\[
  \bar j(\xi) = \frac12 \bigl( \xi_2^2 + \cdots + \xi_n^2 \bigr)
  = \frac12 (|\xi|^2-\xi_1^2),
\]
so~\eqref{eq:jbar_formula} follows.

For the second assertion, we assume for $\tau \in \Lambda_{\dive}$ that it is diagonal (by a similar argument as above) and compute
\begin{align*}
  \bar j^*(\tau) &= \sup_{\xi \in \Msn} \left\{\xi : \tau - \bar j(\xi) \right\} \\
  &= \sup_{\xi \in \Lambda_{\dive}}\left\{\xi : \tau - j(\xi) \right\} \\
  &= \sup_{\xi \in \Msn}\left\{\xi : \tau - j(\xi) \right\} \\
  &= j^*(\tau),
\end{align*}
where we used that $\bar j(\xi) = j(\xi)$ for $\xi \in \Lambda_{\dive}$ by~\eqref{eq:jbar_formula}.
\end{proof}

From~\eqref{eq:jbar_formula} one also gets immediately that there is a $c > 0$ such that
\begin{equation} \label{eq:jbar_est}
  c^{-1} |\xi|^2 \leq \bar j(\xi) \leq c|\xi|^2,  \qquad
  c^{-1} |\xi| \leq \rho^\circ(\xi) \leq c|\xi|, \qquad \xi \in \Msn,
\end{equation}
since we may estimate $\bar j(\xi) \geq \frac12 |\xi_n|^2 \geq \frac{1}{2n}|\xi|^2$) and the upper bound is obvious. Also,
\[
  \rho(\xi) \leq 1     \qquad\Longleftrightarrow\qquad
  \bar j(\xi) \leq \frac12  \qquad\Longleftrightarrow\qquad
  \xi_2^2 +\cdots+\xi_n^2 \leq 1.
\]

In the physical cases $n=2$ or $3$, we can write $\bar j^*$ explicitly by combining the previous formula~\eqref{eq:jbar_formula} with the arguments of~\cite[p.~872]{AllaireKohn93}. In this way one obtains the expressions for $\bar j^*$ and $\rho^\circ$ in~\eqref{eq:jbs_elast} and~\eqref{eq:WMi}, respectively.

\begin{rem}\label{rem:qalpha}
In the two-dimensional case, it is easy to approximate $\bar j^*$ given by~\eqref{eq:jbs_elast} by means of quadratic forms. Indeed, for all $\alpha \in [-1,1]$, let us define the nonnegative (hence convex) quadratic form
$$q_\alpha(\tau):=\frac12|\tau|^2+\alpha \det \tau, \qquad \tau \in \Mstwo.$$
For all $\sigma \in \Mstwo$, we have
\begin{equation}\label{eq:maxalpha}
\bar j^*(\tau)=\frac12 (|\tau_1| + |\tau_2|)^2=\max_{\alpha\in \{-1,1\}} q_\alpha(\tau).
\end{equation}
\end{rem}

In the three-dimension case, a similar approximation of $\bar j^*$ is more involved. To this aim, the following algebraic lemma will be employed below to show that a certain class of quadratic forms is included in the set of the $\Lambda_{\dive}$-convex functions.
\begin{lem}\label{diagonalentries}
Let $\xi \in \Msn$ and let $\xi_1,\cdots,\xi_n$ be the eigenvalues of $\xi$, ordered as singular values, $|\xi_1|\leq\cdots\leq|\xi_n|$. Then,
\begin{equation}\label{23entries}
\xi_{22}^2+\cdots+\xi_{nn}^2\leq\xi_2^2+\cdots+\xi_n^2.
\end{equation}
\end{lem}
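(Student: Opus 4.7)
My plan is to reformulate the desired inequality using the Frobenius identity and then exploit a min-max characterization of $\xi_1^2$ as the smallest eigenvalue of $\xi^2$.

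First, I would observe that \eqref{23entries} is equivalent, after adding $\xi_{11}^2$ to the left and $\xi_1^2$ to the right of a rearranged form, to
\[
  \sum_{i=1}^n \xi_i^2 - \sum_{i=1}^n \xi_{ii}^2 \;\geq\; \xi_1^2 - \xi_{11}^2.
\]
The left-hand side is easy to identify: since $\xi$ is symmetric, the Frobenius norm satisfies
\[
  \sum_{i=1}^n \xi_i^2 \;=\; |\xi|^2 \;=\; \sum_{i=1}^n \xi_{ii}^2 + 2\sum_{i<j} \xi_{ij}^2,
\]
so the task reduces to proving the off-diagonal estimate
\[
  2\sum_{i<j}\xi_{ij}^2 \;\geq\; \xi_1^2 - \xi_{11}^2.
\]
If $\xi_{11}^2 \geq \xi_1^2$ there is nothing to prove, so assume otherwise.

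Next, I would invoke the Courant--Fischer min-max theorem applied to the positive semidefinite matrix $\xi^2$. Because the eigenvalues of $\xi$ are ordered as singular values ($|\xi_1|\leq\cdots\leq|\xi_n|$), the smallest eigenvalue of $\xi^2$ is exactly $\xi_1^2$, giving
\[
  \xi_1^2 \;=\; \min_{\|v\|=1} v^T \xi^2 v \;=\; \min_{\|v\|=1} \|\xi v\|^2.
\]
Testing with $v=e_1$ and using the symmetry $\xi_{j1}=\xi_{1j}$ yields
\[
  \xi_1^2 \;\leq\; \|\xi e_1\|^2 \;=\; \xi_{11}^2 + \sum_{j>1}\xi_{1j}^2.
\]
Therefore $\xi_1^2 - \xi_{11}^2 \leq \sum_{j>1}\xi_{1j}^2 \leq 2\sum_{i<j}\xi_{ij}^2$, which is precisely the required bound. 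Combining with the Frobenius identity above and rearranging terms yields \eqref{23entries}.

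The entire argument is purely linear-algebraic and short; there is no real obstacle, the only point requiring a small amount of care is the identification of $\xi_1^2$ (the square of the \emph{smallest-in-absolute-value} eigenvalue of $\xi$) with the smallest eigenvalue of $\xi^2$, which is what makes the Courant--Fischer estimate at $v=e_1$ point in the correct direction. Note also that the argument localizes the ``defect'' $\xi_1^2-\xi_{11}^2$ entirely into the first row/column of $\xi$, which is why only the $(1,1)$-entry is singled out on the diagonal side of the inequality.
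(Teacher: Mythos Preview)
Your argument is correct and is in fact considerably slicker than the paper's own proof. The paper establishes the general-$n$ case only by citation (to de~Oliveira and Sing) and then gives a direct argument only for $n=2,3$: the case $n=2$ is essentially your Rayleigh-quotient step, while for $n=3$ the paper proves the stronger intermediate inequality $\sup\{|\xi x|^2+|\xi y|^2 : |x|=|y|=1,\ x\cdot y=0\}\le \xi_2^2+\xi_3^2$ via an explicit (and somewhat laborious) computation with a discriminant argument. Your approach, by contrast, works uniformly in all dimensions with a two-line combination of the Frobenius identity $\sum_i\xi_i^2-\sum_i\xi_{ii}^2=2\sum_{i<j}\xi_{ij}^2$ and the Rayleigh-quotient bound $\xi_1^2\le\|\xi e_1\|^2=\xi_{11}^2+\sum_{j>1}\xi_{1j}^2$. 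The trade-off is that the paper's $n=3$ argument yields a sharper intermediate statement (the supremum over orthonormal pairs), which is, however, not used elsewhere; your route sacrifices that but is self-contained, dimension-free, and shorter.
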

\begin{proof}
This is a general result of linear algebra which can be found in~\cite{DeOliveira71} and~\cite[Corollary~on~p.~90]{Sing76}. We also give a direct argument for $n=2$ or $3$, which correspond to the physical cases of interest here. 

For $n=2$, it is well known that the spectral radius of a matrix $\xi \in \Mstwo$ can be obtained by maximizing the Rayleigh quotient
$$|\xi_2|^2=\max_{|x|=1}(x^T\xi^T\xi x)=\max_{|x|=1}|\xi x|^2.$$
Thus, taking $x=\ee_2$, the second vector of the canonical basis of $\R^2$, we get that $\xi_{22}^2 \leq \xi_2^2$, which corresponds to~\eqref{23entries} in the case $n=2$.

For $n=3$, we will prove that 
$$\sup_{\substack{|x|=|y|=1\\ x\cdot y=0}} (|\xi x|^2+|\xi y|^2) \leq \xi_2^2+\xi_3^2,$$
then~\eqref{23entries} will follow by taking $x=\ee_2$ and $y=\ee_3$.

Up to a change of variables in the supremum, we can assume that $\xi$ is diagonal, $\xi=\text{diag}(\xi_1,\xi_2,\xi_3)$. Let $f \colon \R^3\times\R^3\to\R$ be defined by
$$f(x,y):=|\xi x|^2+|\xi y|^2.$$
We will show that for all $x,y\in \R^3$, with $|x|=|y|=1$ and $x\cdot y=0$, 
\begin{equation}\label{uppestim}
f(x,y)\leq \xi_2^2+\xi_3^2.
\end{equation}
We set
$$\tilde x:=\left(0,x_2,\sqrt{x_1^2+x_3^2}\right)\qquad \text{and} \qquad \tilde y:=\left(0,-\sqrt{x_1^2+x_3^2},x_2\right)$$
which satisfy $|\tilde x|=|\tilde y|=1$ and $\tilde x\cdot \tilde y=0$. Compute
\begin{align}
f(\tilde x,\tilde y)-f(x,y)&=\xi_1^2(\tilde x_1^2+\tilde y_1^2-x_1^2- y_1^2) \\
&\qquad +\xi_2^2(\tilde x_2^2+\tilde y_2^2-x_2^2- y_2^2) \\
&\qquad +\xi_3^2(\tilde x_3^2+\tilde y_3^2-x_3^2- y_3^2)\notag\\
&= (\xi_2^2-\xi_1^2)(\tilde x_2^2+\tilde y_2^2-x_2^2- y_2^2) \\
&\qquad +(\xi_3^2-\xi_1^2)(\tilde x_3^2 +\tilde y_3^2-x_3^2- y_3^2)\notag\\
&= (\xi_2^2-\xi_1^2)(x_1^2+x_3^2-y_2^2)+(\xi_3^2-\xi_1^2)(x_1^2+x_2^2-y_3^2)\notag\\
&= (\xi_2^2-\xi_1^2)(1-x_2^2-y_2^2)+(\xi_3^2-\xi_1^2)(1-x_3^2-y_3^2),\label{positive}
\end{align}
where in the second and in the fourth equalities we used $|x|=|y|=|\tilde x|=|\tilde y|=1$, and in the third equality we used the definition of $\tilde x$ and $\tilde y$. Since by hypothesis $\xi_2^2-\xi_1^2\geq0$ and  $\xi_3^2-\xi_1^2\geq0$, the expression in~\eqref{positive} is shown to be nonnegative as soon as we have checked that
\begin{equation}\label{componentwise}
x_2^2+y_2^2\leq 1, \qquad x_3^2+y_3^2\leq 1.
\end{equation}
If $x_1=0$ and $y_1 = 0$, the inequality~\eqref{componentwise} follows trivially since $y=\pm(0,-x_3,x_2)$. Then, without loss of generality, we may assume $x_1\neq0$. Writing 
$$y_1=-\frac{x_2y_2+x_3y_3}{x_1}$$
and inserting this expression into $|y|^2=1$, we find
\begin{equation}\label{y2y3}
(x_1^2+x_2^2)y_2^2+(2x_2x_3y_3)y_2+(x_1^2y_3^2+x_3^2y_3^2-x_1^2)=0.
\end{equation}
Since the second component $y_2$ of the given $y\in\R^3$ is a solution of the previous equation, the discriminant $\Delta$ of this equation (with respect to $y_2$) must be nonnegative, that is, recalling that $|x|=1$,
\[
  \Delta=4 x_1^2 (1-x_3^2-y_3^2)\geq0.
\]
This gives the second inequality of~\eqref{componentwise}. Writing~\eqref{y2y3} instead as a quadratic equation for $y_3$, we obtain by an analogous argument that also $1-x_2^2-y_2^2 \geq 0$, that is, the first inequality of~\eqref{componentwise}. Finally, we observe
\[
  f(\tilde x,\tilde y)=\xi_2^2+\xi_3^2.
\]
Then,~\eqref{positive} implies~\eqref{uppestim} and the proof of the lemma is finished.
\end{proof}

The next lemma provides a family of $\Lambda_{\dive}$-convex quadratic forms, which includes Tartar's $\Lambda_{\dive}$-convex function $\tau \in \Msn \mapsto (n-1)|\tau|^2-({\rm tr\,}\tau)^2$.

\begin{lem}\label{lem:supR}
For $\xi \in \Msn$ such that $\rho(\xi) \leq 1$ (which is equivalent to $\xi_2^2 +\cdots+\xi_n^2 \leq 1$) we define the quadratic form $Q_\xi \colon \Msn \to \R$ via
$$Q_\xi(\tau):=\frac12|\tau|^2 - \frac12 (\xi:\tau)^2, \qquad \tau \in \Msn.$$
Then $Q_\xi$ is $\Lambda_{\rm div}$-convex and 
$$\bar j^*(\tau)
= \frac12 \sup_{\rho(\xi) \leq 1} (\xi:\tau)^2
=\sup_{\rho(\xi) \leq 1} \left\{\frac12|\tau|^2 - Q_\xi(\tau)\right\}, \qquad \tau \in \Msn.$$
\end{lem}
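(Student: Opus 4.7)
The second equality in the displayed formula is an immediate rearrangement of the definition of $Q_\xi$, so the work splits into two parts: (a) showing that $Q_\xi$ is $\Lambda_{\dive}$-convex, and (b) establishing the identity $\bar j^*(\tau) = \frac12 \sup_{\rho(\xi)\leq 1}(\xi:\tau)^2$.

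For (a), I would use the observation that, for a quadratic form $q$ on $\Msn$, the map $t\mapsto q(A+t\sigma)$ is convex for every $A\in \Msn$ if and only if $q(\sigma)\geq 0$; hence $\Lambda_{\dive}$-convexity of $Q_\xi$ reduces to showing $(\xi:\tau)^2 \leq |\tau|^2$ for every singular $\tau \in \Msn$. The plan is to diagonalize $\tau = R D R^T$ with $R\in \mathrm{O}(n)$ and $D = \diag(0,\tau_2,\ldots,\tau_n)$, exploiting the freedom to place the zero eigenvalue in position~$1$. Setting $\xi' := R^T \xi R$, the Frobenius product collapses to $\xi:\tau = \sum_{i=2}^n \xi'_{ii}\tau_i$, and Cauchy--Schwarz gives
\[
  (\xi:\tau)^2 \leq \Bigl(\sum_{i=2}^n (\xi'_{ii})^2\Bigr) \Bigl(\sum_{i=2}^n \tau_i^2\Bigr) = \Bigl(\sum_{i=2}^n (\xi'_{ii})^2\Bigr) |\tau|^2.
\]
Now $\xi'$ has the same eigenvalues (ordered as singular values) as $\xi$, so Lemma~\ref{diagonalentries} applied to $\xi'$ bounds the remaining factor by $\xi_2^2 + \cdots + \xi_n^2 = \rho(\xi)^2 \leq 1$. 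The critical point is to align the zero eigenvalue of $\tau$ with the index excluded in Lemma~\ref{diagonalentries}, so that the Cauchy--Schwarz sum involves exactly the diagonal entries that the lemma controls; this alignment is the one spot where the argument requires some care and is the main obstacle.

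For (b), I would start from the identity $\bar j = \frac12 \rho^2$ established earlier in Section~\ref{sc:convex}, so that $\bar j^*(\tau) = \sup_{\xi\in \Msn}\bigl\{\xi:\tau - \tfrac12\rho(\xi)^2\bigr\}$. Since $\rho$ is positively $1$-homogeneous and vanishes only at the origin, I would optimize ray by ray: for each $\xi \neq 0$, maximizing $t \mapsto t(\xi:\tau) - \tfrac{t^2}{2}\rho(\xi)^2$ over $t \in \R$ yields $(\xi:\tau)^2/(2\rho(\xi)^2)$. The resulting objective is positively $2$-homogeneous in $\xi$, so the supremum is attained on the unit gauge sphere and equals $\tfrac12 \sup_{\rho(\xi)=1}(\xi:\tau)^2$, which in turn agrees with $\tfrac12 \sup_{\rho(\xi)\leq 1}(\xi:\tau)^2$ because rescaling $\xi$ up to $\rho(\xi)=1$ only increases $(\xi:\tau)^2$. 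This delivers the claimed formula and, together with (a), completes the proof.
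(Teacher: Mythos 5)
Your plan is correct and follows the paper's proof essentially step for step: for the $\Lambda_{\dive}$-convexity, both arguments conjugate $\xi$ into the eigenbasis of the singular matrix $\tau$ (with $\tau_1=0$ automatic from the ordering of eigenvalues as singular values, so no extra ``alignment'' argument is needed) and then reduce to the bound $\sum_{i\geq 2}(\xi'_{ii})^2\leq 1$ from Lemma~\ref{diagonalentries} — your Cauchy--Schwarz step and the paper's analysis of the eigenvalues of $\Id - a\otimes a$ are the same computation packaged differently. For the supremum formula, your ray-by-ray optimization is a self-contained re-derivation of the identity $\bar j^*=\frac12(\rho^\circ)^2$ together with $\rho^\circ(\tau)=\sup_{\rho(\xi)\leq1}\xi:\tau$, which the paper simply cites from Section~\ref{sc:convex}.
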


\begin{proof}
Since for all $\tau \in \Msn$ we have $\bar j^*(\tau)=\frac12 \rho^\circ(\tau)^2$, we get
\[
 \bar j^*(\tau)
  = \frac12 \sup_{\rho(\xi) \leq 1} (\xi:\tau)^2
 = \sup_{\rho(\xi) \leq 1} \left\{\frac12|\tau|^2 - Q_\xi(\tau)\right\}, \qquad \tau \in \Msn.
\]
Here, we used $\rho(-\xi) = \rho(\xi)$ to exchange the square and the supremum.

Given $\xi \in \Msn$ such that $\rho(\xi)\leq1$, it remains to show that $Q_\xi$ is $\Lambda_{\rm div}$-convex. Let $\tau \in \Lambda_{\rm div}$, i.e., $\tau \in \Msn$ and ${\rm det }\,\tau=0$. By spectral decomposition, we can write $\tau=PDP^T$ for some $P \in \mathrm{SO}(n)$ and $D={\rm diag}(\tau_1,\ldots,\tau_n)$, where $\tau_1,\ldots,\tau_n$ are the eigenvalues of $\tau$, ordered as singular values, $|\tau_1|\leq\cdots\leq |\tau_n|$. Since $\tau$ is singular, $\tau_1=0$. Setting $A:=P^T \xi P$,
\begin{align*}
 Q_\xi(\tau) &= \frac12 |D|^2-\frac12 (A:D)^2 \\
 &=\frac12 (|\tau_2|^2+\cdots+|\tau_n|^2) - \frac12 (A_{22} \tau_2 +\cdots+ A_{nn}\tau_n)^2 \\
 &= \frac12 (Z \hat{\tau})\cdot\hat{\tau},
\end{align*}
where $\hat{\tau} := (\tau_2,\tau_3,\ldots,\tau_n)$ and
\[
  Z := \begin{pmatrix}
    1 - A_{22}^2   &  -A_{22}A_{33}  &  \cdots  &- A_{22}A_{nn} \\
    -A_{22}A_{33}  &  1 - A_{33}^2   &  \cdots  &  -A_{33}A_{nn} \\
    \vdots         &         \vdots        &  \ddots  & \vdots \\ 
    -A_{22}A_{nn}               &  -A_{33}A_{nn}              &    \hdots      & 1 - A_{nn}^2
  \end{pmatrix}
  = \Id - a \otimes a
\]
for
\[
a = (A_{22}, A_{33}, \ldots, A_{nn}).
\]
Then, $Q_\xi$ is nonnegative if and only if all eigenvalues of $Z$ are nonnegative. The scalar $\lambda \in \R$ is an eigenvalue of $Z$ if and only if
\[
  0 = \det( \lambda \Id - Z ) = (-1)^{n-1} \det( (1-\lambda)\Id - a \otimes a ),
\]
that is, $1-\lambda$ is an eigenvalue of $a \otimes a$. Since $a\otimes a$ has rank one, only one eigenvalue of $a \otimes a$ may be non-zero, so the nonnegativity of all $\lambda$'s is equivalent to $\tr (a \otimes a) \leq 1$.

By Lemma~\ref{diagonalentries}, using that $\xi$ and $A$ have the same eigenvalues and that $\rho(\xi) \leq 1$, we obtain that indeed
\[
  \tr (a \otimes a) = A_{22}^2+\cdots+A_{nn}^2 \leq  \xi_2^2+\cdots+\xi_n^2\leq1.
\]
We thus deduce that the quadratic form $Q_\xi$ is nonnegative on the wave cone $\Lambda_{\rm div}$, hence that $Q_\xi$ is $\Lambda_{\rm div}$-convex.
\end{proof}

\subsection{The dual of $\Hrm^1$ modulo rigid deformations}

The following result is in the spirit of the standard characterization of the dual of the Sobolev space $\Hrm^1(\Omega;\R^n)$, see, e.g.,~\cite[Theorem~4.3.2]{Ziemer89book}.

\begin{prop}\label{prop:W1pR*}
Let $\Omega\subset\R^n$ be a bounded Lipschitz domain. Let $g \in \Hrm^{-1}(\R^n;\R^n)$ with $\supp(g) \subset \overline\Omega$ and $\langle g,r\rangle=0$ for all $r \in \Rcal$. Then, there exists $G \in \Lrm^2(\Omega;\Msn)$ such that
\[
  \dprb{ g,v }=\int_\Omega G:e(v)\dd x, \qquad  v \in \Hrm^1(\R^n;\R^n)
\]
 and
\[
  \|G\|_{\Lrm^2(\Omega;\Msn)} = \|g\|_{\Hrm^{-1}(\R^n;\R^n)}.
\]
\end{prop}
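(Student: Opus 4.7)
The proof plan is to apply Riesz representation on a quotient Hilbert space built from the symmetric gradient, in the spirit of the standard $\Hrm^{-1}$ representation theorem (see \cite[Theorem~4.3.2]{Ziemer89book}). The support condition on $g$ together with its vanishing on $\Rcal$ will permit one to represent $g$ as the divergence of a \emph{symmetric} $\Lrm^2$ field concentrated on $\overline\Omega$.

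\textbf{Descent to a quotient space.} Set $W := \Hrm^1(\Omega;\R^n)/\Rcal$. For $[u] \in W$, pick a representative $u$, extend it to $\tilde u \in \Hrm^1(\R^n;\R^n)$ via a bounded extension operator (which exists since $\Omega$ is Lipschitz), and set $L([u]) := \dprb{g, \tilde u}$. Well-definedness reduces to showing $\dprb{g, w} = 0$ whenever $w \in \Hrm^1(\R^n;\R^n)$ satisfies $w|_\Omega \in \Rcal$. In the case $w|_\Omega = 0$, the restriction $w|_{\R^n \setminus \overline\Omega}$ has vanishing trace on $\partial\Omega$ and, after a cutoff at infinity, can be approximated in $\Hrm^1(\R^n;\R^n)$ by functions in $\Crm^\infty_c(\R^n \setminus \overline\Omega;\R^n)$; each such approximant pairs to zero with $g$ because $\supp(g) \subset \overline\Omega$. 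For the general case $w|_\Omega = r \in \Rcal$, choose $\phi \in \Crm^\infty_c(\R^n)$ equal to $1$ near $\overline\Omega$ and decompose $w = \phi r + (w - \phi r)$: the first summand pairs to $0$ with $g$ by the hypothesis $\dprb{g, r} = 0$ (interpreted through such a cutoff, which is legitimate since $\supp(g)$ is compact), while the second falls into the preceding case.

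\textbf{Hilbert structure and Riesz representation.} Equip $W$ with the inner product $\dprb{[u],[v]}_W := \int_\Omega e(u):e(v) \dd x$. By Korn's second inequality the induced norm $\|[u]\|_W := \|e(u)\|_{\Lrm^2(\Omega)}$ is equivalent to the quotient $\Hrm^1$ norm, so $(W,\dprb{\cdot,\cdot}_W)$ is a Hilbert space. Combining Korn with the bounded extension operator yields
\[
 |L([u])| \leq \|g\|_{\Hrm^{-1}(\R^n;\R^n)} \|\tilde u\|_{\Hrm^1(\R^n)} \leq C\|g\|_{\Hrm^{-1}(\R^n;\R^n)} \|e(u)\|_{\Lrm^2(\Omega)},
\]
so $L \in W^*$. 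Riesz representation then provides a unique $[U] \in W$ with $L([u]) = \int_\Omega e(U):e(u) \dd x$ for all $[u] \in W$. Setting $G := e(U) \in \Lrm^2(\Omega;\Msn)$ and unwinding the definition of $L$ delivers the pairing identity $\dprb{g,v} = \int_\Omega G:e(v) \dd x$ for every $v \in \Hrm^1(\R^n;\R^n)$.

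\textbf{Main obstacle: the isometric norm identity.} The most delicate point is the equality $\|G\|_{\Lrm^2(\Omega;\Msn)} = \|g\|_{\Hrm^{-1}(\R^n;\R^n)}$. One direction, $\|g\|_{\Hrm^{-1}} \leq \|G\|_{\Lrm^2}$, is immediate from Cauchy--Schwarz and $\|e(v)\|_{\Lrm^2(\Omega)} \leq \|v\|_{\Hrm^1(\R^n)}$. The reverse requires unpacking the dual norm: by Riesz $\|G\|_{\Lrm^2} = \|L\|_{W^*}$, so the task reduces to matching $\|L\|_{W^*}$ with $\|g\|_{\Hrm^{-1}}$ without losing a Korn constant. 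Given a near-optimal $v$ in the supremum defining $\|g\|_{\Hrm^{-1}}$, I would restrict it to $\Omega$, subtract a Korn-optimal rigid motion and re-extend, arranging the construction so that the resulting class in $W$ has $W$-norm no larger than $\|v\|_{\Hrm^1(\R^n)}$ while still giving the same pairing $\dprb{g, v}$; under the natural convention that $\|g\|_{\Hrm^{-1}(\R^n;\R^n)}$ equals the infimum of $\|H\|_{\Lrm^2(\R^n;\Msn)}$ among symmetric $H$ with $g = -\dive H$ (equivalently, the dual norm of $\Hrm^1(\R^n;\R^n)/\Rcal$ under the symmetric-gradient seminorm), this matching is essentially tautological and produces the desired isometric identity.
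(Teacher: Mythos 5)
Your proposal is correct and follows essentially the same route as the paper: identify $g$ (via the $\Hrm^1$-extension property of the Lipschitz domain) with a functional on the quotient $\Hrm^1(\Omega;\R^n)/\Rcal$, use Korn's inequality to endow this quotient with the Hilbert norm $\|e(\cdot)\|_{\Lrm^2(\Omega;\Msn)}$, and then apply Riesz representation to produce $G=e(U)$. The only cosmetic difference is that the paper sends the quotient isometrically onto the closed subspace $E:=e\bigl(\Hrm^1(\Omega;\R^n)/\Rcal\bigr)\subset\Lrm^2(\Omega;\Msn)$ and invokes Hahn--Banach before Riesz on $\Lrm^2$; since a norm-preserving Hahn--Banach extension off a closed subspace of a Hilbert space is the zero-on-$E^\perp$ extension, this produces exactly your $G=e(U)$, so the two constructions coincide. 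Your added remarks on the well-definedness of $L$ and on the convention underlying the isometric norm identity $\|G\|_{\Lrm^2}=\|g\|_{\Hrm^{-1}}$ address delicate points the paper's proof asserts more tersely.
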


\begin{proof}
Since $\Omega$ has Lipschitz boundary, it is an $\Hrm^1$-extension domain. Therefore, any $g \in \Hrm^{-1}(\R^n;\R^n)$ satisfying $\supp(g) \subset \overline\Omega$ and $\langle g,r\rangle=0$ for all $r \in \Rcal$ can be identified with an element of the dual space $[\Hrm^1(\Omega;\R^n)/\Rcal]^*$, where $\Rcal$ as before denotes the space of rigid deformations. Using Korn's inequality (see~\cite{Nitsche81,Horgan95,Temam84book}), we can endow $\Hrm^1(\Omega;\R^n)/\Rcal$ with the norm
$$\|v\|:=\|e(v)\|_{\Lrm^2(\Omega;\Msn)}, \qquad v \in \Hrm^1(\Omega;\R^n)/\Rcal,$$
which is equivalent to the canonical quotient norm.

Let us consider the map $P \colon \Hrm^1(\Omega;\R^n)/\Rcal \to \Lrm^2(\Omega;\Msn)$ defined by
$$P(v):=e(v), \qquad   v \in \Hrm^1(\Omega;\R^n)/\Rcal.$$
Clearly, $P$ defines an isometric isomorphism from  $\Hrm^1(\Omega;\R^n)/\Rcal$ (equipped with the above norm) to its range $E:={\rm Im}(P)$ in $\Lrm^2(\Omega;\Msn)$, which is therefore a closed subspace of $\Lrm^2(\Omega;\Msn)$. Seeing $g$ as an element of the space $[\Hrm^1(\Omega;\R^n)/\Rcal]^*$, we define
$$L:=g \circ P^{-1}$$
which is an element of the dual space $E^*$ of $E$ and which satisfies
$$\|L\|_{E^*}=\|g\|_{[\Hrm^1(\Omega;\R^n)/\Rcal]^*}.$$
According to the Hahn--Banach extension theorem, $L$ can be extended to $\tilde L \in [\Lrm^2(\Omega;\Msn)]^*$ with
$$\|\tilde L\|_{[\Lrm^2(\Omega;\Msn)]^*}=\|L\|_{E^*}.$$
Then, since $\Lrm^2(\Omega;\Msn)$ is isometrically isomorphic to its dual, there is $G \in \Lrm^2(\Omega;\Msn)$ such that
$$\tilde L(\xi)=\int_\Omega G:\xi\dd x, \qquad \xi \in \Lrm^2(\Omega;\Msn)$$
and
$$\|G\|_{\Lrm^2(\Omega;\Msn)}=\|\tilde L\|_{[\Lrm^2(\Omega;\Msn)]^*}.$$
As a consequence, we get for all $v \in  \Hrm^1(\Omega;\R^n)/\Rcal$ that
$$\dprb{ g,v }=\dprb{ g\circ P^{-1},e(v)}=\dprb{ L,e(v) }=\dprb{ \tilde L,e(v)}=\int_\Omega G:e(v)\dd x$$
and
\begin{align*}
\|G\|_{\Lrm^2(\Omega;\Msn)}&=\|\tilde L\|_{[\Lrm^2(\Omega;\Msn)]^*}\\
&=\|L\|_{E^*}\\
&=\|g\|_{[\Hrm^1(\Omega;\R^n)/\Rcal]^*}\\
&=\|g\|_{\Hrm^{-1}(\R^n;\R^n)},
\end{align*}
which completes the proof of the proposition.
\end{proof}

\subsection{Approximation of measures with compact support}\label{s:mollification}

In this section we will construct regularizations of a given measure supported in $\overline{\Omega}$ that remain supported in $\overline\Omega$. In contrast, classical mollification would produce regularizations with support in a set slightly larger than $\Omega$ and thus outside the domain of finiteness of our limit functional $\overline{\mathscr E}$. We adapt the mollification construction of~\cite{ErnGuermond} to the case of measures.

In all of the following we assume that $\Omega \subset \R^n$ is a bounded open set with $\Crm^2$-boundary. We first show the existence of a transversal field for $\Omega$. The main point, contrary to~\cite[Sections~2.1 and~2.2]{ErnGuermond} (see also~\cite{HofmannMitreaTaylor07}), is that the transversal field can be taken to be a gradient. This property will be instrumental in constructing an approximation of the identity (later denoted by $\theta_\delta$) whose gradient is a symmetric matrix field.

\begin{lem}
Let $\Omega \subset \R^n$ be a bounded open set with $\Crm^2$-boundary.  There exists a scalar function $k\in \Crm^2_c(\R^n)$ such that $\nabla k= \nu$ on $\partial \Omega$, where $\nu$ is the unit outer normal to $\partial \Omega$. 
\end{lem}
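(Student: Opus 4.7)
The plan is to construct $k$ from the signed distance function to $\partial\Omega$, truncated by a smooth cutoff that leaves behavior near $\partial\Omega$ unchanged.

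First, define the signed distance function $d\colon\R^n\to\R$ by
\[
  d(x):=\dist(x,\overline{\Omega})-\dist(x,\R^n\setminus\Omega),
\]
so that $d<0$ inside $\Omega$, $d>0$ outside $\overline{\Omega}$, and $d=0$ on $\partial\Omega$. Since $\partial\Omega$ is of class $\Crm^2$ and $\Omega$ is bounded, a standard result in geometric analysis (see, e.g., the tubular neighborhood theorem, or Gilbarg--Trudinger, Appendix 14.6) yields a number $\delta_0>0$ and a tubular neighborhood
\[
  U_{\delta_0}:=\setb{x\in\R^n}{\dist(x,\partial\Omega)<\delta_0}
\]
on which $d$ is of class $\Crm^2$. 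Moreover, on $\partial\Omega$ one has $\nabla d=\nu$, the unit outer normal, because at a boundary point the gradient of the signed distance coincides with the unit vector pointing from the nearest interior point outward.

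Next, I would pick a cutoff $\chi\in\Crm^\infty_c(\R^n)$ with $0\leq\chi\leq 1$, $\chi\equiv 1$ on $U_{\delta_0/3}$, and $\supp\chi\subset U_{\delta_0/2}$. Such a $\chi$ exists by standard partition-of-unity constructions, since $U_{\delta_0/3}$ and $\R^n\setminus U_{\delta_0/2}$ are disjoint closed sets (the former bounded because $\Omega$ is bounded). Now define
\[
  k(x):=\chi(x)\,d(x),\qquad x\in\R^n,
\]
with the convention that $\chi(x)d(x):=0$ for $x\notin U_{\delta_0}$; this is well-defined and of class $\Crm^2$ on all of $\R^n$ because $\chi d$ is $\Crm^2$ on $U_{\delta_0}$ (product of two $\Crm^2$ functions) and vanishes identically on the open set $\R^n\setminus\supp\chi\supset\R^n\setminus U_{\delta_0/2}$. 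In particular, $k\in\Crm^2_c(\R^n)$, with $\supp k\subset\overline{U_{\delta_0/2}}$, which is compact because $\partial\Omega$ is compact.

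Finally, I would verify the boundary condition. For $x\in\partial\Omega$, we have $d(x)=0$ and $\chi(x)=1$ (since $\partial\Omega\subset U_{\delta_0/3}$). By the Leibniz rule,
\[
  \nabla k(x)=\chi(x)\nabla d(x)+d(x)\nabla\chi(x)=1\cdot\nu(x)+0=\nu(x).
\]
There is no real obstacle here; the only ingredient of note is the $\Crm^2$-regularity of $d$ in a tubular neighborhood, which is precisely where the $\Crm^2$-hypothesis on $\partial\Omega$ is used (Lipschitz regularity would not suffice to even make sense of $\nabla d$ pointwise on $\partial\Omega$).
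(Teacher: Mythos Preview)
Your proof is correct and follows essentially the same approach as the paper: multiply the signed distance function by a smooth cutoff supported in a tubular neighborhood of $\partial\Omega$ and equal to $1$ near $\partial\Omega$, then use $d=0$ on $\partial\Omega$ to kill the extra term in the Leibniz rule. The only cosmetic difference is that you spell out the product-rule computation explicitly, which the paper leaves implicit.
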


\begin{proof}
Let us consider the signed distance function to $\partial \Omega$ defined by
$$d(x):=\begin{cases}
-\dist(x,\partial \Omega) & \text{if }x \in \Omega,\\
\phantom{-}\dist(x,\partial \Omega) & \text{if }x \notin \Omega.
\end{cases}$$
It is well known (see, e.g.,~\cite[Lemma~14.16]{GilbargTrudinger98book}) that $d$ is of class $\Crm^2$ in an open neighborhood $U$ of $\partial \Omega$ and that $\nabla d=\nu$ on $\partial\Omega$. Let $\varphi \in \Crm^\infty_c(\R^n)$ be such that 
$0 \leq \varphi \leq 1$ in $\R^n$, $\supp(\varphi) \subset U$ and $\varphi \equiv 1$ in an open neighborhood of $\partial\Omega$. Then, $k:= \varphi d \in \Crm^2_c(\R^n)$ satisfies $\nabla k=\nu$ on $\partial\Omega$, as required.
\end{proof}

For all $\delta\in[0,1]$ let us define the {\it expansion map} $\theta_\delta\colon\R^n\to \R^n$ by
$$\theta_\delta(x):=x+3\delta \nabla k(x), \qquad x\in\R^n.$$
Clearly, we have the following properties for all $\delta > 0$:
\begin{enumerate}[(i)]
	\item $\theta_\delta\in \Crm^1(\R^n;\R^n)$;
	\item $\nabla \theta_\delta=\Id+3\delta D^2k$ is $\Msn$-valued;
	\item there exists $c>0$ (independent of $\delta$) such that
	$$ \max_{x \in \R^n} \bigl\{|\theta_\delta(x)-x|+|\nabla \theta_\delta(x)-\Id| \bigr\} \leq c\delta.$$ 
	\end{enumerate} 

For $\delta > 0$ small, the map $\theta_\delta$ is a $\Crm^1$-diffeomorphism from $\R^n$ onto its range. Indeed, let
\[
\delta_0' := \min \bigl\{ (3\|D^2 k\|_\infty)^{-1}, 1 \bigr\}.
\]
For any $\delta \in (0,\delta_0')$, we have that $\nabla \theta_\delta(x)=\Id+3\delta D^2k(x)$ is invertible for all $x \in \R^n$,  while
$$|x-x'|\leq 3 \delta \|D^2 k\|_{\infty}|x-x'|+|\theta_\delta(x)-\theta_\delta(x')|, \qquad x,x' \in \R^n,$$ 
and hence $\theta_\delta$ is injective in $\R^n$. The claim then follows from the inverse function theorem.

Moreover, we have that $\theta_\delta$ converges  uniformly in $\R^n$ to the identity as $\delta \to 0$, as well as $\nabla \theta_\delta$, $(\nabla \theta_\delta)^{-1} \to \Id$ and $\det\nabla \theta_\delta \to 1$ uniformly in $\R^n$. 

The next result shows that the map $\theta_\delta$ indeed expands $\Omega$ into a larger domain.

\begin{lem}\label{lem:2delta}
Let $\Omega \subset \R^n$ be a bounded open set with $\Crm^2$-boundary. There exists a $\delta_0'' > 0$ such that for all $\delta \in (0,\delta_0'')$ we have
$$\theta_\delta(\partial\Omega) + B_{2\delta}(0) \Subset \R^n \setminus \overline\Omega.$$
\end{lem}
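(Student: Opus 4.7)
The plan is to exploit the regularity of $\partial\Omega$ through the signed distance function $d$ (negative inside $\Omega$, positive outside). Because $\Omega$ is of class $\Crm^2$, there is a tubular neighborhood radius $r_0 > 0$ such that $d \in \Crm^2(\{|d|<r_0\})$, $|\nabla d|=1$ on this neighborhood, and for every $x \in \partial\Omega$ and every $t \in (-r_0,r_0)$ the normal line parametrization $d(x + t\nu(x)) = t$ holds. The definition of $\theta_\delta$ together with the crucial property $\nabla k = \nu$ on $\partial\Omega$ means that, for $x \in \partial\Omega$, we have $\theta_\delta(x) = x + 3\delta\,\nu(x)$, which by the tubular-neighborhood identity lies at signed distance exactly $3\delta$ from $\partial\Omega$, provided $3\delta < r_0$.

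I would then set $\delta_0'' := \min\{\delta_0',\, r_0/5\}$ and carry out the argument as follows. First, for any $\delta \in (0,\delta_0'')$ and any $x \in \partial\Omega$ the computation above gives $d(\theta_\delta(x)) = 3\delta$. Second, recall that the signed distance function is globally $1$-Lipschitz on $\R^n$; therefore for every $y \in B_{2\delta}(\theta_\delta(x))$ we obtain
\[
d(y) \;\geq\; d(\theta_\delta(x)) - |y - \theta_\delta(x)| \;>\; 3\delta - 2\delta \;=\; \delta \;>\; 0,
\]
so $y \in \R^n \setminus \overline\Omega$. Taking the union over $x \in \partial\Omega$ yields $\theta_\delta(\partial\Omega) + B_{2\delta}(0) \subset \{d > 0\}$, and in fact the closure is contained in the closed set $\{d \geq \delta\}$. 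Finally, since $\partial\Omega$ is compact and $\theta_\delta$ is continuous, the set $\theta_\delta(\partial\Omega) + \overline{B_{2\delta}(0)}$ is compact; combined with its inclusion in $\R^n\setminus\overline\Omega$ at positive distance $\delta$ from $\overline\Omega$, this gives exactly the desired compact containment $\Subset$.

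There is no serious obstacle here — this is essentially a routine application of the smooth tubular-neighborhood structure of a $\Crm^2$-domain. The only point that requires a little care is to make sure $\delta_0''$ is chosen small enough so that all points in question remain within the domain of validity of the tubular neighborhood identities (hence the factor $r_0/5$, which comfortably absorbs both the offset $3\delta$ and the additional ball radius $2\delta$), and that $\delta_0'' \leq \delta_0'$ so that the earlier properties of $\theta_\delta$ (in particular injectivity) remain available. The role of the assumption $\nabla k = \nu$ on $\partial\Omega$ is essential: without it, $\theta_\delta$ would not push $\partial\Omega$ strictly outward in the normal direction with the precise quantitative rate $3\delta$, and the clean estimate $d(\theta_\delta(x)) = 3\delta$ would be lost.
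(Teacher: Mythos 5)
Your proof is correct and follows essentially the same route as the paper: both reduce the claim to the identity $\theta_\delta(z) = z + 3\delta\nu(z)$ for $z \in \partial\Omega$ and then verify that the $2\delta$-ball around this point misses $\overline\Omega$. The paper invokes the uniform exterior sphere condition at scale $3\delta$ (citing Gilbarg--Trudinger), whereas you obtain the same conclusion from the tubular-neighborhood identity $d(z+t\nu(z))=t$ combined with the global $1$-Lipschitz bound on the signed distance — equivalent consequences of the $\Crm^2$-regularity of $\partial\Omega$.
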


\begin{proof}
Let $\delta_0'' > 0$ be so small that for all $z \in \partial \Omega$ it holds that $B_{3\delta}(z+3\delta\nu(z)) \subset \R^n \setminus \cl{\Omega}$ and $B_{3\delta}(z+3\delta\nu(z)) \subset \{k=d\}$ with $\nu(z)$ the unit outer normal to $\Omega$ at $z$. The first inclusion follows from the so-called uniform outer sphere condition, which is implied by the $\Crm^2$-regularity of $\partial \Omega$ and the ensuing fact that all scalar curvatures of $\partial \Omega$ are bounded (see~\cite[Section~14.6]{GilbargTrudinger98book}). The second inclusion is a consequence of the fact that $k=d$ in an open neighborhood of $\partial\Omega$.

Let us now fix $z \in \partial\Omega$. Since $k=d$ in an open neighborhood of $\partial\Omega$, we have $\theta_\delta(z)=z+3\delta\nabla d(z)$. Using that $\nabla d(z)=\nu(z)$, we get that
$$\theta_\delta(z)=z+3\delta \nu(z),$$ 
which shows that $\theta_\delta(z)+B_{2\delta}(0) \Subset \R^n\setminus \overline\Omega$. 
\end{proof}

Set $\delta_0 := \min\{\delta_0',\delta_0''\}$ and define our regularization as follows:

\begin{defn}
Let $\lambda \in \M(\overline\Omega;\Msn)$ and $\delta\in (0,\delta_0)$. For all $x\in \R^n$, we define
\[
\lambda^\delta(x):=\frac{\det(\nabla \theta_\delta(x))}{\delta^n}\int_{\R^n}\eta\Bigl(\frac{\theta_\delta(x)-y}{\delta}\Bigr)\dd\lambda(y)(\nabla \theta_\delta(x))^{-1},
\]
where $\eta \in \Crm^\infty_c(\R^n)$ is a standard mollifier (satisfying $\eta(-x)=\eta(x)$ for all $x \in \R^n$, $\supp(\eta)\subset B_1(0)$ and $\int_{\R^n}\eta(z)\dd z=1$).
\end{defn}

Clearly, this construction is linear in $\lambda$. We next collect the relevant properties of $\lambda^\delta$.

\begin{prop}\label{prop:properies_mollification2}
Let $\Omega \subset \R^n$ be a bounded open set with $\Crm^2$-boundary. The following properties hold:	
\begin{enumerate}[(i)]
	\item $\lambda^\delta\in \Crm_c(\Omega;\Msn)$ for $\delta \in (0,\delta_0)$;\label{e:m32}
	\item $\lambda^\delta\wsto \lambda$ in $\M(\R^n;\Msn)$ as $\delta \to 0$;\label{e:m52}
	\item $|\lambda^\delta|(\Omega) \to |\lambda|(\overline\Omega)$ as $\delta \to 0$;\label{e:m62}
	\item If $\dive\lambda \in \M(\R^n;\R^n)$ then $\dive \lambda^\delta \in \Crm_c(\Omega;\R^n)$ and $\dive\lambda^\delta \wsto \dive\lambda$ in $\M(\R^n;\R^n)$ as $\delta \to 0$. If further $\dive \lambda = 0$, then also $\dive \lambda^\delta = 0$.\label{e:m72}
	\item If $\lambda \in \Lrm^1(\Omega;\Msn)$, then $\|\lambda^\delta - \lambda\|_{\Lrm^1(\Omega;\Msn)} \to 0$ as $\delta \to 0$.\label{e:m82}
\end{enumerate}	
\end{prop}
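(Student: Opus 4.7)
The formula defining $\lambda^\delta$ is the composition of a standard mollification with a Piola-type transform under the diffeomorphism $\theta_\delta$. Writing $\eta_\delta(z) := \delta^{-n}\eta(z/\delta)$ and $\tilde\lambda^\delta := \eta_\delta \ast \lambda$ for the smooth classical mollification (a $\Msn$-valued function supported in $\cl\Omega + B_\delta(0)$), we have
\[
  \lambda^\delta(x) = \det(\nabla\theta_\delta(x))\, \tilde\lambda^\delta(\theta_\delta(x))\, (\nabla\theta_\delta(x))^{-1}.
\]
The role of $\theta_\delta$ is precisely to compress the potentially leaked support of $\tilde\lambda^\delta$ back inside $\Omega$, and the symmetry of $\nabla\theta_\delta = \Id + 3\delta D^2 k$ is used to preserve the symmetric structure along the transform.

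For (i), continuity follows from $\theta_\delta \in \Crm^1$ and the smoothness of $\eta$. For the compact support in $\Omega$, one treats two regions separately: outside a large compact set, $\theta_\delta(x)=x$ lies far from $\cl\Omega$, so $\tilde\lambda^\delta(\theta_\delta(x))=0$; in a small neighborhood of $\partial\Omega$, Lemma~\ref{lem:2delta} together with continuity of $\theta_\delta$ ensures $\theta_\delta(x) + B_\delta(0) \subset \R^n\setminus\cl\Omega$, again yielding $\tilde\lambda^\delta(\theta_\delta(x)) = 0$.

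Properties (ii), (iii) and (v) are proved via the change of variables $z = \theta_\delta(x)$ combined with the matrix identity $A:(BC) = (AC^T):B$. For $\varphi \in \Crm_0(\R^n;\Msn)$,
\[
  \int_{\R^n} \varphi : \lambda^\delta\,dx = \int_{\R^n} \bigl[(\varphi\circ\theta_\delta^{-1})(\nabla\theta_\delta\circ\theta_\delta^{-1})^{-T}\bigr] : \tilde\lambda^\delta\,dz,
\]
with the bracketed test field converging uniformly to $\varphi$ (since $\theta_\delta,\nabla\theta_\delta\to\Id$ uniformly) and $\tilde\lambda^\delta\wsto\lambda$ by standard mollification, proving (ii). The same substitution bounds $|\lambda^\delta|(\Omega) \leq (1+o(1))|\lambda|(\cl\Omega)$; the matching liminf comes from the weak-* lower semicontinuity of total variation applied in any open neighborhood of $\cl\Omega$ containing the supports of all $\lambda^\delta$, yielding (iii). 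Property (v) reduces to the classical $L^1$ convergence $\|\eta_\delta\ast f - f\|_{\Lrm^1}\to 0$ and the $\Lrm^1$-continuity of translations along $\theta_\delta \to \Id$, combined with the uniform convergence of the Jacobian factors $\det(\nabla\theta_\delta)(\nabla\theta_\delta)^{-1}\to \Id$.

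Property (iv) is the most delicate point. The Piola identity for $\Crm^2$-diffeomorphisms, together with $\dive\tilde\lambda^\delta = \eta_\delta\ast\dive\lambda$, gives
\[
  \dive\lambda^\delta(x) = \det(\nabla\theta_\delta(x))\,(\eta_\delta\ast\dive\lambda)(\theta_\delta(x)),
\]
which is continuous and compactly supported in $\Omega$ by repeating the support argument of (i) applied to the measure $\dive\lambda$ instead of $\lambda$. If $\dive\lambda = 0$, the formula immediately gives $\dive\lambda^\delta = 0$. The weak-* convergence $\dive\lambda^\delta\wsto\dive\lambda$ then follows by integration by parts against $\varphi\in\Crm^1_c(\R^n;\R^n)$, using (ii) with $\nabla\varphi$ in place of the test field, together with a density argument based on the uniform mass bound $|\dive\lambda^\delta|(\R^n)\leq|\dive\lambda|(\R^n)$ (an immediate consequence of the Piola formula and the change of variables). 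The main obstacle throughout is the simultaneous interplay of the two scales---the mollifier scale $\delta$ tends to push the support outside $\Omega$, while $\theta_\delta$ (of scale $3\delta$) pulls it back in---and it is precisely this balance that requires the full force of Lemma~\ref{lem:2delta}, hence of the $\Crm^2$-regularity of $\partial\Omega$.
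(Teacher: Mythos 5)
Your proposal is correct and follows essentially the same route as the paper: the expansion map $\theta_\delta$ composed with a scale-$\delta$ mollifier, the symmetry of $\nabla\theta_\delta$ to make the Piola cofactor factor, and Piola's identity for~(iv). Two small variations are worth noting. In~(iv), rather than re-running the Fubini/change-of-variables argument of Step~2 on $\dive\lambda^\delta$ (as the paper does), you derive the weak* convergence from $\lambda^\delta \wsto \lambda$ via integration by parts and upgrade from $\Crm^1_c$ to $\Crm_0$ test functions using the bound $|\dive\lambda^\delta|(\R^n)\leq|\dive\lambda|(\R^n)$; this is a legitimate and arguably slicker shortcut. In~(v), be careful with the phrase ``$\Lrm^1$-continuity of translations along $\theta_\delta\to\Id$'': the map $\theta_\delta$ is not a translation, and since the mollifier scale and the deformation scale are both $\sim\delta$, a naive Lipschitz estimate on $(\eta_\delta\ast\lambda)\circ\theta_\delta - \eta_\delta\ast\lambda$ only gives $\BigO(1)$, not $\SmallO(1)$. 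What is actually needed is the continuity of composition $\|\lambda\circ\theta_\delta - \lambda\|_{\Lrm^1}\to 0$ for a fixed $\Lrm^1$ function $\lambda$, which is true but already requires the density argument that the paper makes explicit (approximating $\lambda$ by $g\in\Crm_c$ and using linearity of $\lambda\mapsto\lambda^\delta$ together with the uniform $\Lrm^1$ bound). So there is no genuine gap, but the paper's formulation of this step is the more careful one, and your sketch would need to be fleshed out along those lines to be airtight.
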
 

\begin{proof}
Since $\nabla \theta_\delta(x)=\Id+3\delta D^2k(x) \in \Msn$ for all $x \in \R^n$, we observe that $\lambda^\delta$ is $\Msn$-valued.

\emph{Step~1.} Concerning~\ref{e:m32}, since $\det(\nabla \theta_\delta) \in \Crm(\R^n)$ and also $(\nabla\theta_\delta)^{-1} \in \Crm(\R^n;\Msn)$, it is enough to check that the map
$$\tilde \lambda^\delta(x):=\frac{1}{\delta^n}\int_{\R^n}\eta\Bigl(\frac{\theta_\delta(x)-y}{\delta}\Bigr)\dd\lambda(y),  \qquad x \in \R^n,$$
belongs to $\Crm_c(\Omega;\Msn)$. We first show that $\tilde \lambda^\delta$ is Lipschitz continuous in $\R^n$. Indeed, for all $x$, $x' \in \R^n$, we have
$$\tilde \lambda^\delta(x)-\tilde \lambda^\delta(x')=\frac{1}{\delta^n}\int_{\R^n}\Bigl(\eta\Bigl(\frac{\theta_\delta(x)-y}{\delta}\Bigr)-\eta\Bigl(\frac{\theta_\delta(x')-y}{\delta}\Bigr)\Bigr)\dd\lambda(y).$$	
Since $\eta$ and $\theta_\delta$ are uniformly Lipschitz continuous in $\R^n$, we have
$$\left|\eta\Bigl(\frac{\theta_\delta(x)-y}{\delta}\Bigr)-\eta\Bigl(\frac{\theta_\delta(x')-y}{\delta}\Bigr)\right|\leq \frac{c}{\delta}|x-x'|,$$
for some constant $c>0$, hence,
$$|\tilde\lambda^\delta(x)-\tilde \lambda^\delta(x')| \leq \frac{c}{\delta^{n+1}}|x-x'|,$$
that is, $\tilde\lambda^\delta$ is Lipschitz continuous in $\R^n$ and then $ \tilde \lambda^\delta \in \Crm(\R^n;\Msn)$.

We next show that $\tilde \lambda^\delta$ has support in $\Omega$. Let $\e_\delta:=\delta/(1+3\delta c_k)$, where $c_k$ is the Lipschitz constant of $\nabla k$, and let $x\in\Omega$ be such that $\dist(x,\partial\Omega)<\e_\delta$. Arguing as in~\cite[Lemma~4.1]{ErnGuermond}, we have
$$\theta_\delta(x)+B_{\delta}(0)\subset \R^n \setminus \overline{\Omega}.$$
Indeed, $\partial\Omega$ being compact, there exists a point $z \in \partial\Omega$ such that $|x-z|=\dist(x,\partial\Omega)<\e_\delta$. Then,
\begin{align*}
\theta_\delta(x) +B_\delta(0) &= \theta_\delta(z) + B_\delta(0) +\theta_\delta(x)-\theta_\delta(z)\\
&= \theta_\delta(z) + B_\delta(0) + x-z +3\delta[ \nabla k(x)-\nabla k(z)]
\end{align*}
and, thanks to Lemma~\ref{lem:2delta}, this set is contained in
\[
  \theta_\delta(z) + B_\delta(0) +B_{\e_\delta(1+3\delta c_k)}(0)
  \subset \theta_\delta(z)+B_{2\delta}(0)
  \subset \R^n \setminus \overline{\Omega}.
\]
Now, if $y\in \R^n$ is such that 
$$\frac{y-\theta_\delta(x)}{\delta}\in B_1(0),$$
we have $y\in \theta_\delta(x)+B_{\delta}(0)\subset\R^n\setminus\overline{\Omega}$. We conclude by definition of $\tilde \lambda^\delta$, using that $\supp(\eta) \subset B_1(0)$ and $\supp(\lambda) \subset \overline\Omega$, that $\tilde \lambda^\delta(x)=0$ for all $x\in\Omega$ such that $\dist(x,\partial\Omega)<\e_\delta$, and hence $\tilde \lambda^\delta\in \Crm_c(\Omega;\Msn)$.

\emph{Step~2.} Concerning~\ref{e:m52}, for all $\varphi\in \Crm_c(\R^n;\Msn)$, by Fubini's theorem and a change of variables we have
\begin{align*}
&\int_{\Omega}\tilde \lambda^\delta(x):\varphi(x)\det(\nabla\theta_\delta(x)) \dd x \\
&\qquad = \int_{\R^n}\varphi(x):\biggl(\int_{\R^n}\frac{\det(\nabla \theta_\delta(x))}{\delta^n}\,\eta\Bigl(\frac{\theta_\delta(x)-y}{\delta}\Bigr)\dd\lambda(y)\biggr)\dd x\\
&\qquad = \int_{\R^n}\biggl(\int_{\R^n}\varphi(x)\,\frac{\det(\nabla \theta_\delta(x))}{\delta^n}\,\eta\Bigl(\frac{\theta_\delta(x)-y}{\delta}\Bigr)\dd x\biggr):\di\lambda(y)\\
&\qquad = \int_{\overline\Omega}\biggl(\int_{B_1(0)}\varphi\big(\theta_\delta^{-1}(y+\delta z)\big)\eta(z)\dd z\biggr): \di\lambda(y).
\end{align*}
As $\delta\to 0$ we have that $\theta_\delta^{-1}(y+\delta z)\to y$ uniformly with respect to $(y,z)\in \overline \Omega \times B_1(0)$, and hence we conclude that 
\begin{align*}
\int_{\Omega}\tilde \lambda^\delta(x):\varphi(x) \det(\nabla\theta_\delta(x)) \dd x
&\to \int_{\overline\Omega}\biggl(\int_{B_1(0)}\eta(z)\dd z\biggr)\varphi(y):\di\lambda(y)\\
&=\int_{\overline\Omega}\varphi(y):\di\lambda(y).
\end{align*}
This entails that $\det(\nabla\theta_\delta)\tilde\lambda^\delta \wsto \lambda$ in $\M(\R^n;\Msn)$. Then, since $(\nabla\theta_\delta)^{-1} \to {\rm Id}$ uniformly in $\R^n$, we deduce that $\lambda^\delta=\det(\nabla\theta_\delta)\tilde\lambda^\delta(\nabla\theta_\delta)^{-1}  \wsto \lambda$ in $\M(\R^n;\Msn)$.

\emph{Step~3.} To prove~\ref{e:m62}, we first note that, by lower semicontinuity of the total variation, we have
$$|\lambda|(\overline\Omega)\leq\liminf_{\delta\to 0}|\lambda^\delta|(\Omega).$$
To get the other inequality, we first observe that since $\nabla \theta_\delta=\Id+3\delta D^2 k$, for all $\delta \in (0,\delta_0)$ we have
$$(\nabla \theta_\delta)^{-1}=\sum_{l=0}^{\infty} [-3\delta D^2k]^l=\Id+R_\delta \quad \text{ in }\overline \Omega,$$
where $R_\delta \in \Crm(\R^n;\Msn)$ with $\sup_{\overline\Omega}|R_\delta| \leq M\delta$ for some $M>0$ (independent of $\delta$). Thus, for all $x \in \R^n$,
\begin{align*}
\lambda^\delta(x) &=\frac{\det(\nabla \theta_\delta(x))}{\delta^n}\int_{\R^n}\eta\Bigl(\frac{\theta_\delta(x)-y}{\delta}\Bigr)\dd\lambda(y) \\
&\qquad +\frac{\det(\nabla \theta_\delta(x))}{\delta^n}\int_{\R^n}\eta\Bigl(\frac{\theta_\delta(x)-y}{\delta}\Bigr)\dd\lambda(y)R_\delta(x).
\end{align*}
Taking the total variation on both sides and using Fubini's theorem as well as a change of variables,
\begin{align*}
  |\lambda^\delta|(\Omega) &\leq \frac{1+M\delta}{\delta^n}\int_{\R^n}\left(\int_{\R^n}\eta\Bigl(\frac{\theta_\delta(x)-y}{\delta}\Bigr) |\det(\nabla \theta_\delta(x))|\dd|\lambda|(y)\right)\dd x \\
  &=\frac{1+M\delta}{\delta^n}\int_{\R^n}\left(\int_{\R^n}\eta\Bigl(\frac{\theta_\delta(x)-y}{\delta}\Bigr) |\det(\nabla \theta_\delta(x))| \dd x\right)\dd|\lambda|(y)\\
&=(1+M\delta) \int_{\overline\Omega}\left(\int_{\R^n} \eta(z)\dd z\right)\dd |\lambda|(y)\\
&=(1+M\delta)|\lambda|(\overline \Omega).
\end{align*}
This implies that 
$$\limsup_{\delta\to 0}|\lambda^\delta|(\Omega)\leq |\lambda|(\overline\Omega),$$
which proves~\ref{e:m62}.

\emph{Step~4.} Concerning~\ref{e:m72}, we first observe that if $\lambda\in \Crm^\infty_c(\R^n;\Msn)$, then a change of variables implies that
$$\lambda^\delta(x)=\int_{\R^n} \eta(z) \det(\nabla \theta_\delta(x))\lambda(\theta_\delta(x)+\delta z)(\nabla \theta_\delta(x))^{-1}\dd z.$$
Using that $\nabla\theta_\delta$ is symmetric, it holds that
\[
  \dive(\det(\nabla \theta_\delta) \nabla \theta_\delta^{-1}) = \dive(\cof(\nabla \theta_\delta))=0
\]
by Cramer's rule and Piola's identity, the latter of which is proved, for instance, in~\cite[Lemma on p.~462]{Evans10book}. Hence, we get
\begin{align*}
\dive\lambda^\delta(x)&= \int_{\R^n} \eta(z) \det(\nabla \theta_\delta(x))[\dive\lambda](\theta_\delta(x)+\delta z)\dd z\\
 &= \frac{\det(\nabla \theta_\delta(x))}{\delta^n}\int_{\R^n}\eta\Bigl(\frac{\theta_\delta(x)-y}{\delta}\Bigr)[\dive\lambda](y)\dd y.
\end{align*}
Thus, using a standard approximation of measures (e.g., by usual convolution), we infer that if $\lambda \in \M(\overline\Omega;\Msn)$ with $\dive \lambda \in \M(\R^n;\R^n)$, then
$$\dive\lambda^\delta(x)=\frac{\det(\nabla \theta_\delta(x))}{\delta^n}\int_{\R^n}\eta\Bigl(\frac{\theta_\delta(x)-y}{\delta}\Bigr)\dd[\dive\lambda](y).$$
Moreover, arguing as in Step~1 shows that $\dive \lambda^\delta \in \Crm_c(\Omega;\R^n)$. Owing again to Fubini's theorem and a change of variables, we deduce, like in Step~2, that $\dive\lambda^\delta \wsto \dive\lambda$ in $\M(\R^n;\R^n)$. That $\dive \lambda^\delta = 0$ if $\dive \lambda = 0$ is obvious from the above formula, which concludes the proof of~\ref{e:m72}.

\emph{Step~5.} Finally, for~\ref{e:m82} we observe that if $\lambda \in \Lrm^1(\Omega;\Msn)$, then by a similar argument as in Step~3, $\|\lambda^\delta\|_{\Lrm^1(\Omega;\Msn)} \leq C \|\lambda\|_{\Lrm^1(\Omega;\Msn)}$ for all $\delta \in (0,1)$ and a constant $C > 0$. Let $\e > 0$ and take $g \in \Crm_c(\Omega;\Msn)$ with $\|\lambda-g\|_{\Lrm^1(\Omega;\Msn)} \leq \e$. Then, using the linearity of the regularization,
\begin{align*}
  \|\lambda^\delta-\lambda\|_{\Lrm^1(\Omega;\Msn)}
  &\leq \|(\lambda-g)^\delta\|_{\Lrm^1(\Omega;\Msn)} 
    + \|g^\delta-g\|_{\Lrm^1(\Omega;\Msn)} \\
  &\qquad + \|g-\lambda\|_{\Lrm^1(\Omega;\Msn)} \\
  &\leq  (C+1) \|\lambda-g\|_{\Lrm^1(\Omega;\Msn)} + \|g^\delta-g\|_{\Lrm^1(\Omega;\Msn)} \\
  &\leq  (C+1)\e + \|g^\delta-g\|_{\Lrm^1(\Omega;\Msn)}.
\end{align*}
The second term converges to zero as $\delta \todown 0$ since for $g \in \Crm_c(\Omega;\Msn)$ we have $g^\delta \to g$ uniformly (because $\nabla \theta_\delta$, $(\nabla \theta_\delta)^{-1} \to \Id$ and $\det\nabla \theta_\delta \to 1$ uniformly and also using standard arguments for mollifiers). As $\e > 0$ was arbitrary, we thus have shown that $\|\lambda^\delta-\lambda\|_{\Lrm^1(\Omega;\Msn)} \to 0$.
\end{proof}

\begin{rem}\label{rem:2.9}
Since $\supp(\lambda^\delta)\subset \Omega$, a further approximation of $\lambda^\delta$ by means of usual convolution would produce a smooth approximation of $\lambda$ in $\Crm^\infty_c(\Omega;\Msn)$  satisfying all the requirements of Proposition~\ref{prop:properies_mollification2}.
\end{rem}

\begin{rem}
If $\mu\in \M^1(\overline\Omega)$ and $\delta \in (0,\delta_0)$, we can define analogously
\[
\mu^\delta(x):=\frac{\det(\nabla \theta_\delta(x))}{\delta^n}\int_{\R^n}\eta\Bigl(\frac{\theta_\delta(x)-y}{\delta}\Bigr)\dd\mu(y), \qquad x \in \R^n.
\]
Then, we have similarly
\begin{enumerate}[(i)]
\item $\mu^\delta\in \Crm_c(\Omega)$;\label{e:m3}
\item $\mu^\delta(\Omega)=1$;\label{e:m4}
\item $\mu^\delta\wsto \mu$ in $\M(\R^n)$ as $\delta \to 0$.\label{e:m5}
\end{enumerate}
Indeed, properties (i) and (iii) follow from the same argument than in the proof of Proposition~\ref{prop:properies_mollification2}. Concerning (ii), using that $\supp(\mu^\delta)\subset\Omega$, Fubini's theorem and a change of variables, we get
\begin{align*}
\mu^\delta(\Omega) &=\int_{\R^n}\frac{\det(\nabla \theta_\delta(x))}{\delta^n}\biggl(\int_{\R^n}\eta\Bigl(\frac{\theta_\delta(x)-y}{\delta}\Bigr)\dd\mu(y)\biggr)\dd x \\
&=\int_{\R^n}\biggl(\int_{\R^n}\frac{\det(\nabla \theta_\delta(x))}{\delta^n}\eta\Bigl(\frac{\theta_\delta(x)-y}{\delta}\Bigr)\dd x\biggr)\dd\mu(y)\\
&= \int_{\R^n}\biggl(\int_{\R^n}\eta(z)\dd z\biggr)\dd\mu(y) \\
&=1
\end{align*}
because $\mu$ and $\eta\LL^n$ are probability measures. Moreover, as in Remark~\ref{rem:2.9}, we can then in turn mollify $\mu^\delta$ by usual convolution to get a smooth approximating sequence in $\Crm^\infty_c(\Omega)$.
\end{rem}

\section{Compactness and lower bound} \label{sc:lower}

We first identify the natural topology with respect to which minimizing sequences of~\eqref{eq:Ceps} are expected to converge. This follows from the following compactness result. Note that the boundedness assumption of the divergence term will usually be guaranteed by the fact that, in the definition~\eqref{eq:Ceps} of $\mathscr C_\e$, the divergence of the competitor stresses are prescribed in $\Hrm^{-1}(\R^n;\R^n)$. This property will also be instrumental in the proof of the lower bound in order to apply a compensated compactness argument on the rescaled stresses $\sqrt\e\sigma_\e\mu_\e$.

\begin{prop} \label{prop:compactness}
Assume that for all $\e > 0$ we are given
\[
  (\sigma_\e,\mu_\e) \in X_\e(\Omega)
\]
such that
\[
  \sup_{\e > 0} \int_{\R^n}|\sigma_\e|^2\dd\mu_\e < \infty  \qquad\text{and}\qquad
  \sup_{\e > 0} \, \norm{\dive (\sigma_\e\mu_\e)}_{\Hrm^{-1}(\R^n;\R^n)} < \infty.
\]
Then, there exist a sequence $\{\e_k\}_{k\in \N}$ with $\e_k \todown 0$ and $(\sigma,\mu) \in X(\Omega)$ with $\dive(\sigma\mu) \in \Hrm^{-1}(\R^n;\R^n)$ such that
\[
 \begin{cases}
\mu_{\e_k} \wsto \mu &\text{in $\M(\R^n)$}, \\
\sigma_{\e_k} \mu_{\e_k} \wsto \sigma \mu  &\text{in $\M(\R^n;\Msn)$},\\
\dive(\sigma_{\e_k}\mu_{\e_k})\wto\dive(\sigma \mu) &\text{in }\Hrm^{-1}(\R^n;\R^n),
\end{cases}
\]
and
\[
\begin{cases}
  \sqrt{\e_k} \, \sigma_{\e_k} \mu_{\e_k} \wto 0  & \text{in $\Lrm^2(\R^n;\Msn)$},\\
  \dive( \sqrt{\e_k} \, \sigma_{\e_k} \mu_{\e_k}) \to 0  &\text{in $\Hrm^{-1}(\R^n;\R^n)$.}
  \end{cases}
\]
\end{prop}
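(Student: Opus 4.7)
The plan is to extract compactness in four stages (one for each object), then identify the limits using a Riesz-representation argument for the key relation $\lambda = \sigma\mu$.

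First, since each $\mu_\e$ is a probability measure supported in the compact set $\overline\Omega$, the family $\{\mu_\e\}$ is tight and bounded in $\M(\R^n)$, so by Banach--Alaoglu we can extract $\mu_{\e_k}\wsto\mu\in\M^1(\overline\Omega)$. Next, by Cauchy--Schwarz
\[
|\sigma_\e\mu_\e|(\R^n)=\int_{\R^n}|\sigma_\e|\dd\mu_\e\leq\Bigl(\int_{\R^n}|\sigma_\e|^2\dd\mu_\e\Bigr)^{1/2}\mu_\e(\R^n)^{1/2}\leq C,
\]
so $\{\sigma_\e\mu_\e\}$ is bounded in $\M(\R^n;\Msn)$ with supports in $\overline\Omega$; passing to a further subsequence we obtain $\sigma_{\e_k}\mu_{\e_k}\wsto\lambda$ in $\M(\R^n;\Msn)$ for some $\lambda$ supported in $\overline\Omega$. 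Since $\|\dive(\sigma_\e\mu_\e)\|_{\Hrm^{-1}}$ is bounded, a further extraction yields $\dive(\sigma_{\e_k}\mu_{\e_k})\wto L$ in $\Hrm^{-1}(\R^n;\R^n)$; testing against $\varphi\in\Crm^\infty_c(\R^n;\R^n)$ and using weak* convergence of $\sigma_\e\mu_\e$ to $\lambda$ in $\M$ identifies $L=\dive(\lambda)$ in $\Dcal'$.

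The main point is to show $\lambda=\sigma\mu$ for some $\sigma\in\Lrm^2(\R^n,\mu;\Msn)$. For $\phi\in\Crm_c(\R^n;\Msn)$, Cauchy--Schwarz yields
\[
\Bigl|\int_{\R^n}\phi:\sigma_{\e_k}\dd\mu_{\e_k}\Bigr|^2\leq\Bigl(\int_{\R^n}|\phi|^2\dd\mu_{\e_k}\Bigr)\Bigl(\int_{\R^n}|\sigma_{\e_k}|^2\dd\mu_{\e_k}\Bigr)\leq C\int_{\R^n}|\phi|^2\dd\mu_{\e_k}.
\]
Passing to the limit (the left-hand side converges to $|\int\phi:\di\lambda|^2$ by weak* convergence; the right-hand side converges to $C\int|\phi|^2\dd\mu$ since $|\phi|^2\in\Crm_c$), we obtain
\[
\Bigl|\int_{\R^n}\phi:\di\lambda\Bigr|^2\leq C\int_{\R^n}|\phi|^2\dd\mu,\qquad\phi\in\Crm_c(\R^n;\Msn).
\]
Thus $\phi\mapsto\int\phi:\di\lambda$ extends to a bounded linear functional on $\Lrm^2(\R^n,\mu;\Msn)$ (using density of $\Crm_c$ in $\Lrm^2(\mu)$ since $\mu$ is a finite Radon measure), and by the Riesz representation theorem there is $\sigma\in\Lrm^2(\R^n,\mu;\Msn)$ with $\int\phi:\di\lambda=\int\phi:\sigma\dd\mu$ for all test $\phi$, i.e.\ $\lambda=\sigma\mu$. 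This is the step I expect to be the main (if still mild) obstacle, and where the $\Lrm^2$-integrability of the limit stress is really pinned down.

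Finally, for the rescaled stresses, write $g_{\e}:=\sqrt{\e}\,\sigma_\e\mu_\e$ as an $\Lrm^2$ density: since $\mu_\e=\chi_{\omega_\e}\LL^n/\e$, one computes
\[
\|g_\e\|_{\Lrm^2(\R^n;\Msn)}^2=\int_{\R^n}\frac{|\sigma_\e|^2\chi_{\omega_\e}}{\e}\dd x=\int_{\R^n}|\sigma_\e|^2\dd\mu_\e\leq C,
\]
so $g_{\e_k}\wto g$ in $\Lrm^2$ after extraction. On the other hand, the total variation satisfies
\[
|g_\e|(\R^n)=\sqrt{\e}\int_{\R^n}|\sigma_\e|\dd\mu_\e\leq C\sqrt{\e}\to 0,
\]
so $g_\e\to 0$ strongly in $\Lrm^1$, hence $g_\e\wsto 0$ in $\M(\R^n;\Msn)$; testing against $\Crm_c$ functions then forces the weak $\Lrm^2$-limit $g=0$. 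For the divergence,
\[
\|\dive g_\e\|_{\Hrm^{-1}(\R^n;\R^n)}=\sqrt{\e}\,\|\dive(\sigma_\e\mu_\e)\|_{\Hrm^{-1}(\R^n;\R^n)}\leq C\sqrt{\e}\to 0,
\]
giving strong convergence to $0$ in $\Hrm^{-1}$. A diagonal extraction ensures all five convergences hold along the same sequence $\{\e_k\}$.
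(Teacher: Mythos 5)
Your proof is correct and follows essentially the same route as the paper: weak* compactness of the measures and $\Hrm^{-1}$-weak compactness of the divergences, the Cauchy--Schwarz estimate against $\Crm_c(\R^n;\Msn)$ test functions to show $\lambda=\sigma\mu$ with $\sigma\in\Lrm^2(\mu)$ (the paper phrases this via absolute continuity plus Radon--Nikod\'ym, you go directly via Riesz representation on $\Lrm^2(\mu)$ — same content), and the $\Lrm^2$/$\Lrm^1$ two-norm argument plus the $\sqrt{\e_k}$ scaling to identify the weak $\Lrm^2$ limit of $\sqrt{\e_k}\,\sigma_{\e_k}\mu_{\e_k}$ as zero and get strong $\Hrm^{-1}$ convergence of its divergence.
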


\begin{proof}
Let $(\sigma_\e,\mu_\e) \in X_\e(\Omega)$ be such that
$$\int_\Omega \frac12 |\sigma_\e|^2  \dd \mu_\e \leq M, \qquad \|\dive(\sigma_\e \mu_\e)\|_{\Hrm^{-1}(\R^n;\R^n)}\leq M$$
for some $M>0$. By the definition of $X_\e(\Omega)$, there exists a set $\omega_\e \in \mathcal A_\e$ such that $\mu_\e=\frac{1}{\e}\LL^n\res\omega_\e \in \M^1(\overline\Omega)$. Moreover, the family of measures $\{\sigma_\e\mu_\e\}_{\e>0}$ is uniformly bounded in $\M(\R^n;\Msn)$ according to the Cauchy--Schwarz inequality since
$$\int_\Omega |\sigma_\e| \dd \mu_\e \leq \left(\int_\Omega |\sigma_\e|^2 \dd \mu_\e\right)^{1/2}\leq (2M)^{1/2}.$$
Thus, there is a subsequence $\{\e_k\}_{k \in \N}$ with $\e_k \todown 0$ and two measures $\mu \in \M^1(\overline\Omega)$ and $\lambda\in\M(\overline\Omega;\Msn)$ such that $\mu_{\e_k}\wsto\mu$ in $\M(\R^n)$, $\sigma_{\e_k} \mu_{\e_k} \wsto \lambda$ in $\M(\R^n;\Msn)$  and  $\dive(\sigma_{\e_k} \mu_{\e_k}) \wto \dive \lambda$ in $\Hrm^{-1}(\R^n;\R^n)$. 

We will next show that $\lambda$ is absolutely continuous with respect to $\mu$. Indeed, for all $\varphi \in \Crm_c(\R^n;\Msn)$, according to the Cauchy--Schwarz inequality, we infer that
\begin{align*}
\left|\int_\Omega \sigma_{\e_k}:\varphi \dd \mu_{\e_k}\right|
&\leq  \left(\int_\Omega |\sigma_{\e_k}|^2 \dd \mu_{\e_k}\right)^{1/2} \left(\int_\Omega|\varphi|^2 \dd \mu_{\e_k}\right)^{1/2} \\
&\leq  (2M)^{1/2}\left(\int_\Omega|\varphi|^2 \dd \mu_{\e_k}\right)^{1/2}.
\end{align*}
Thus, passing to the limit in the previous inequality yields
$$\left|\int_{\overline \Omega} \varphi : \dd \lambda\right|\leq (2M)^{1/2}\left(\int_{\overline \Omega}|\varphi|^2 \dd \mu\right)^{1/2}.$$
This last inequality actually ensures that $\lambda$ is absolutely continuous with respect to $\mu$. The Radon--Nikod\'ym and Riesz representation theorems give $\sigma \in \Lrm^2(\R^n,\mu;\Msn)$ such that $\lambda=\sigma\mu$. We have thus established the existence of $(\sigma,\mu) \in X(\Omega)$ with $\dive(\sigma\mu) \in \Hrm^{-1}(\R^n;\R^n)$ such that $\mu_{\e_k}\wsto\mu$ in $\M(\R^n)$, $\sigma_{\e_k}\mu_{\e_k}\wsto\sigma\mu$ in $\M(\R^n;\Msn)$ and $\dive(\sigma_{\e_k} \mu_{\e_k}) \wto \dive (\sigma\mu)$ in $\Hrm^{-1}(\R^n;\R^n)$.

Let us define the rescaled stress $\tau_{\e_k}:=\sqrt{\e_k} \sigma_{\e_k} \mu_{\e_k}$. Since the sequence $\{\sigma_{\e_k}\mu_{\e_k}\}_{k \in \N}$ is bounded in $\Lrm^1(\Omega;\Msn)$, it follows that
\[
  \tau_{\e_k} \to 0  \qquad\text{in $\Lrm^1(\R^n;\Msn)$.}
\]
On the other hand, since
\[
\int_\Omega|\tau_{\e_k}|^2 \dd x = \e_k \int_{\Omega} |\sigma_{\e_k}|^2 \frac{\chi_{\omega_{\e_k}}}{\e_k^2} \dd x = \int_\Omega |\sigma_{\e_k}|^2  \dd \mu_{\e_k} \leq 2M,
\]
thus, $\{\tau_{\e_k}\}_{k\in\N}$ is bounded in $\Lrm^2(\R^n;\Msn)$ and $\tau_{\e_k}\wto 0$ in $\Lrm^2(\R^n;\Msn)$. Finally, using that $\{\dive(\sigma_{\e_k}\mu_{\e_k})\}_{k \in \N}$ is bounded in $\Hrm^{-1}(\R^n;\R^n)$, we have that $\dive \tau_{\e_k}=\sqrt{\e_k} \dive(\sigma_{\e_k}\mu_{\e_k}) \to 0$ in $\Hrm^{-1}(\R^n;\R^n)$.
\end{proof}

We now derive a lower bound estimate using a similar technique as the one in~\cite{BabadjianIurlanoRindler19?}. The main argument consists of harnessing Lemma~\ref{lem:supR} by splitting the original energy density $\tau \mapsto \frac12|\tau|^2$ as the sum of a $\Lambda_{\dive}$-convex quadratic form $Q_\xi$, where $\xi \in \Msn$ with $\rho(\xi) \leq 1$, and a function which, after optimization with respect to all such $\xi$, precisely gives the right limit energy density $\bar j^*$. The $\Lambda_{\dive}$-convex part of this splitting acts on the rescaled stress $\tau_\e:=\sqrt\e\sigma_\e\mu_\e$ which converges weakly to $0$ in $\Lrm^2(\R^n;\Msn)$ and $\dive\tau_\e\to 0$ strongly in $\Hrm^{-1}(\R^n;\R^n)$. Thanks to a compensated compactness argument, it can thus be estimated from below by zero in the limit.

\begin{prop}\label{prop:liminf}
Let $\mu \in \mathcal M^1(\overline\Omega)$ and $\{\mu_\e\}_{\e>0}$ be a family in $\M^1(\overline\Omega)$ such that $\mu_\e \wsto \mu$ in $\mathcal M(\R^n)$. Then,
$$\overline{\mathscr C}(\mu) \leq \liminf_{\e \todown 0}\mathscr C_\e(\mu_\e).$$
\end{prop}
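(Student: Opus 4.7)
The strategy is to combine compactness with a pointwise splitting of the quadratic integrand $\frac12|\cdot|^2$ into a $\Lambda_{\dive}$-convex piece, which is handled by compensated compactness on the rescaled stresses $\tau_\e := \sqrt\e\,\sigma_\e\mu_\e$, and a Fisher-type square term, which is handled by weak* convergence of measures together with elementary quadratic duality. If $\liminf_\e \mathscr C_\e(\mu_\e) = +\infty$ there is nothing to prove; otherwise, extract a subsequence realizing the liminf and select nearly optimal stresses $\sigma_\e \in \Lrm^2(\R^n,\mu_\e;\Msn)$ with $-\dive(\sigma_\e\mu_\e)=f$ and $\mathscr E_\e(\sigma_\e,\mu_\e) \le \mathscr C_\e(\mu_\e)+\e$. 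Proposition~\ref{prop:compactness} then produces a limiting pair $(\sigma,\mu) \in X(\Omega)$ satisfying $-\dive(\sigma\mu)=f$, so that $\sigma$ is admissible in $\overline{\mathscr C}(\mu)$, together with the convergences $\mu_\e \wsto \mu$, $\sigma_\e\mu_\e \wsto \sigma\mu$, $\tau_\e \wto 0$ in $\Lrm^2(\R^n;\Msn)$, and $\dive\tau_\e \to 0$ in $\Hrm^{-1}(\R^n;\R^n)$.

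Now fix an arbitrary $\xi \in \Crm_c(\overline\Omega;\Msn)$ with $\rho(\xi(x))\le 1$ everywhere. Lemma~\ref{lem:supR} gives the pointwise identity
\[
  \tfrac12|\sigma_\e(x)|^2 = Q_{\xi(x)}(\sigma_\e(x)) + \tfrac12\bigl(\xi(x):\sigma_\e(x)\bigr)^2 \qquad \text{$\mu_\e$-a.e.\ $x$.}
\]
Since $\mu_\e = \chi_{\omega_\e}/\e$ and $Q_{\xi_0}$ is a quadratic form for each $\xi_0$, one computes directly that $\int Q_{\xi(x)}(\sigma_\e)\dd\mu_\e = \int Q_{\xi(x)}(\tau_\e)\dd x$, so integrating the identity against $\mu_\e$ gives
\[
  \int \tfrac12|\sigma_\e|^2\dd\mu_\e = \int Q_{\xi(x)}(\tau_\e)\dd x + \int \tfrac12(\xi(x):\sigma_\e)^2\dd\mu_\e.
\]

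The two terms are estimated separately. For the first, Lemma~\ref{lem:supR} ensures that $Q_{\xi_0}$ is $\Lambda_{\dive}$-convex (hence nonnegative on $\Lambda_{\dive}$) for each $\xi_0$ with $\rho(\xi_0)\le 1$, so classical compensated compactness applied to $\tau_\e \wto 0$ in $\Lrm^2$ with $\dive \tau_\e \to 0$ in $\Hrm^{-1}$ yields $\liminf_\e \int \phi\, Q_{\xi_0}(\tau_\e)\dd x \ge 0$ for every nonnegative $\phi \in \Crm_c(\R^n)$. For a continuous variable $\xi(x)$, I approximate $\xi$ uniformly by piecewise constant fields, apply the constant-coefficient bound on each piece via a partition of unity, and absorb the remainder using the uniform $\Lrm^2$-bound on $\tau_\e$, which yields $\liminf_\e \int Q_{\xi(x)}(\tau_\e)\dd x \ge 0$. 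For the second term, I use the duality $a^2 = \sup_\phi(2\phi a - \phi^2)$: for every $\phi \in \Crm_c(\R^n)$,
\[
  \int(\xi:\sigma_\e)^2\dd\mu_\e \ge 2\int \phi\,\xi:\dd(\sigma_\e\mu_\e) - \int\phi^2\dd\mu_\e,
\]
whose right-hand side converges, by weak* convergence of $\sigma_\e\mu_\e$ and $\mu_\e$ tested against the continuous bounded functions $\phi\xi$ and $\phi^2$, to $2\int\phi\,\xi:\sigma\dd\mu - \int\phi^2\dd\mu$. Taking the supremum over $\phi$ and using density of $\Crm_c(\R^n)$ in $\Lrm^2(\mu)$ gives $\liminf_\e \int(\xi:\sigma_\e)^2\dd\mu_\e \ge \int(\xi:\sigma)^2\dd\mu$.

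Combining these two estimates shows that, for every admissible test field $\xi$,
\[
  \liminf_\e \mathscr C_\e(\mu_\e) \ge \int \tfrac12(\xi(x):\sigma(x))^2 \dd\mu(x).
\]
Taking the supremum over $\xi \in \Crm_c(\overline\Omega;\Msn)$ with $\rho(\xi)\le 1$ pointwise and commuting the supremum with the $\mu$-integral by a standard measurable-selection argument (using that $\{\rho \le 1\}$ is compact in $\Msn$ and $\bar j^*$ is continuous) recovers $\bar j^*(\sigma(x)) = \frac12 \sup_{\rho(\xi)\le 1}(\xi:\sigma(x))^2$ on the right-hand side. Therefore $\liminf_\e \mathscr C_\e(\mu_\e) \ge \int \bar j^*(\sigma)\dd\mu = \overline{\mathscr E}(\sigma,\mu) \ge \overline{\mathscr C}(\mu)$, as desired. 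The step I expect to be most delicate is the compensated compactness bound for the spatially varying coefficient $\xi(x)$: one must carefully localize by a partition of unity and approximate $\xi$ by piecewise constant fields in order to invoke the constant-coefficient theorem, while controlling the approximation error uniformly in $\e$ via the $\Lrm^2$-bound on $\tau_\e$.
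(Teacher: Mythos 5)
Your argument is correct, but it follows a genuinely different route from the paper's. The paper works locally: it introduces the auxiliary measures $\gamma_k := \frac12|\sigma_k|^2\mu_k \wsto \gamma$, passes to $\mu$-Lebesgue points $x_0$ of $\sigma$ via Besicovitch differentiation, tests with a single \emph{constant} matrix $\xi$ on small balls $B_{\varrho_j}(x_0)$, handles the cross-term $\int\varphi^2(\xi:\sigma_k)^2\dd\mu_k$ via Cauchy--Schwarz plus Reshetnyak's lower semicontinuity theorem (applied to the $1$-homogeneous integrand $g\varphi|\xi:\tau|$), and only then takes the supremum over $\xi\in\Msn$ pointwise at $x_0$ — a supremum of real numbers, so no commutation of sup and integral is needed. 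You instead work globally with a continuous field $\xi(x)$ satisfying $\rho(\xi)\le1$ pointwise, split $\frac12|\sigma_\e|^2=Q_{\xi(x)}(\sigma_\e)+\frac12(\xi(x):\sigma_\e)^2$, handle the $Q$-term by extending the constant-coefficient compensated compactness theorem to variable coefficients through a partition of unity and piecewise-constant approximation (with the error controlled by $\|\tau_\e\|_{\Lrm^2}^2$ uniformly), handle the Fisher term by the elementary quadratic duality $a^2=\sup_\phi(2\phi a-\phi^2)$ together with weak* convergence of $\sigma_\e\mu_\e$ and $\mu_\e$ against $\Crm_c$ test functions and density of $\Crm_c$ in $\Lrm^2(\mu)$, and only at the very end commute the supremum over $\xi$ with the $\mu$-integral via measurable selection plus Lusin/Tietze. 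The trade-off is clean: your approach dispenses with Besicovitch differentiation, the auxiliary measure $\gamma$, and Reshetnyak's theorem, at the cost of two extra technical steps (variable-coefficient compensated compactness and the sup--integral exchange) that the paper's blow-up method avoids entirely. Both routes are sound; yours is a legitimate and perhaps more self-contained alternative, provided you spell out the piecewise-constant approximation estimate (using that $\xi\mapsto Q_\xi(\tau)$ is Lipschitz in $\xi$ with a factor $|\tau|^2$) and the Lusin/Tietze argument (including the projection onto the convex compact set $\{\rho\le1\}$ to keep the approximants admissible).
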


\begin{proof}
If $\liminf_{\e \todown 0} \mathscr C_\e(\mu_\e)=\infty$, then there is nothing to prove. Otherwise, we can extract a subsequence such that
\begin{equation}\label{eq:extraction}
\lim_{k \to \infty}\mathscr C_{\e_k}(\mu_{\e_k})=\liminf_{\e \todown 0}\mathscr C_\e(\mu_\e)<\infty.
\end{equation}
To simplify notation, we write from now on $\mu_k := \mu_{\e_k}$. By definition of $\mathscr C_{\e_k}(\mu_k)$ (and the Direct Method), there exists $\sigma_k \in \Lrm^2(\R^n,\mu_k;\Msn)$ such that $-\dive(\sigma_k\mu_k)=f$ in $\Dcal'(\R^n;\R^n)$ and
\begin{equation}\label{eq:1403}
\mathscr C_{\e_k}(\mu_k)=\int_{\R^n} \frac12 |\sigma_k|^2\dd\mu_k.
\end{equation}
Thanks to Proposition~\ref{prop:compactness} there exists a map $\sigma \in \Lrm^2(\R^n,\mu;\Msn)$ with $-\dive(\sigma\mu) =f$ in $\mathcal D'(\R^n;\R^n)$ such that, up to a subsequence,
\[
  \sigma_k\mu_k\wsto \sigma \mu  \qquad\text{in $\M(\R^n;\Msn)$}
\]
and
\begin{equation}\label{eq:prop1.1}
\begin{cases}
  \sqrt{\e_k} \, \sigma_k \mu_k \wto 0 &\text{in $\Lrm^2(\R^n;\Msn)$},\\
  \dive( \sqrt{\e_k} \, \sigma_k \mu_k)\to 0 &\text{in $\Hrm^{-1}(\R^n;\R^n)$.}
  \end{cases}
\end{equation}
We define the positive measure
\[
\gamma_k(A):=\int_A \frac12 |\sigma_k|^2 \dd \mu_k,  \qquad\text{$A \subset \Omega$ Borel.}
\]
Up to a subsequence, we can assume that $\gamma_k \wsto \gamma$ in $\M(\R^n)$ for some positive measure $\gamma \in \M^+(\overline\Omega)$. Let $\Omega_0$ be the set of all points $x_0 \in \overline \Omega$ that are $\Lrm^2$-Lebesgue points of $\sigma$ with respect to $\mu$, i.e.,
\begin{equation}\label{eq:lebpt}
\lim_{\rho \to 0}\frac{1}{\mu(B_\varrho(x_0))}\int_{B_\varrho(x_0)}|\sigma-\sigma(x_0)|^2 \dd \mu=0,
\end{equation}
and such that the Radon--Nikod\'ym derivative 
\begin{equation}\label{eq:lebpt2}
\frac{\di \gamma}{\di\mu}(x_0)=\lim_{\varrho\to 0}\frac{\gamma(B_\varrho(x_0))}{\mu(B_\varrho(x_0))}
\end{equation}
exists (as the above limit) and is finite. From the Besicovitch differentiation theorem, these properties are satisfied for $\mu$-almost every point $x_0$ in $\overline \Omega$ so that $\mu(\overline \Omega \setminus \Omega_0)=0$. Let us fix $x_0 \in \Omega_0$ and let us consider a sequence of radii $\{\varrho_j\}_{j\in \N}$ such that $\varrho_j \todown 0$ and $\gamma(\partial B_{\varrho_j}(x_0))=0$ for all $j \in \N$. Thus, we have 
\[
\gamma(B_{\varrho_j}(x_0))=\lim_{k \to\infty}\gamma_k(B_{\varrho_j}(x_0)).
\]

Denote by $\sigma_1(x_0),\ldots,\sigma_n(x_0)$ the eigenvalues of the symmetric matrix $\sigma(x_0)$ ordered as singular values, $|\sigma_1(x_0)|\leq \cdots \leq |\sigma_n(x_0)|$. For every $\xi \in \Msn$ with $\rho(\xi) \leq 1$ we consider the $\Lambda_{\dive}$-convex quadratic form $Q_\xi \colon \Msn\to \R$ introduced in Lemma~\ref{lem:supR}, namely
$$Q_\xi(\tau):=\frac12 |\tau|^2 - \frac12 (\xi:\tau)^2, \qquad \tau \in \Msn.$$
Since, by~\eqref{eq:prop1.1}, $\tau_k:=\sqrt{\e_k}\sigma_k\mu_k \wto 0$ weakly in $\Lrm^2(\R^n;\Msn)$ and $\dive\tau_k \to 0$ strongly in $\Hrm^{-1}(\R^n;\R^n)$, it follows from the theory of compensated compactness, see, e.g.,~\cite[Theorem~8.30]{Rindler18book}, that for all $j \in \N$ and all $\varphi \in \Crm^\infty_c(B_{\varrho_j}(x_0))$ with $0 \leq \varphi \leq 1$,
$$\liminf_{k \to \infty} \int_{B_{\varrho_j}(x_0)} \varphi^2 Q_\xi(\tau_k) \dd x \geq 0.$$
Hence,
\begin{align*}
\gamma(B_{\varrho_j}(x_0)) &\geq \liminf_{k \to \infty} \int_{B_{\varrho_j}(x_0)} \frac{\varphi^2}{2}|\sigma_k|^2 \dd \mu_k \\
&=\liminf_{k \to \infty} \int_{B_{\varrho_j}(x_0)} \frac{\varphi^2}{2} |\tau_k|^2 \dd x\\ 
&\geq \liminf_{k \to \infty} \int_{B_{\varrho_j}(x_0)}\varphi^2\left(\frac12 |\tau_k|^2-Q_\xi(\tau_k)\right) \dd x \\
&= \liminf_{k \to \infty} \int_{B_{\varrho_j}(x_0)}\varphi^2 \left(\frac12 |\sigma_k|^2-Q_\xi(\sigma_k)\right) \dd \mu_k\\
&= \liminf_{k \to \infty} \int_{B_{\varrho_j}(x_0)} \frac{\varphi^2}{2}(\xi : \sigma_k)^2 \dd \mu_k.
\end{align*}

Let $g \in \Crm_c(B_{\varrho_j}(x_0))$ be a nonnegative function such that $\|g\|_{\Lrm^2(B_{\varrho_j}(x_0),\mu)}=1$. Using the Cauchy--Schwarz inequality, we get that
$$\int_{B_{\varrho_j}(x_0)} \varphi^2(\xi : \sigma_k)^2 \dd \mu_k \geq \frac{1}{\int_{B_{\varrho_j}(x_0)} |g|^2 \dd \mu_k}\left(\int_{B_{\varrho_j}(x_0)} g\varphi|\xi : \sigma_k| \dd \mu_k \right)^2.$$
Since $\int_{B_{\varrho_j}(x_0)} |g|^2 \dd \mu_k \to \int_{B_{\varrho_j}(x_0)} |g|^2 \dd \mu=1$ as $k\to\infty$, we obtain that 
\[
\gamma(B_{\varrho_j}(x_0)) \geq \liminf_{k \to \infty}\frac12 \left(\int_{B_{\varrho_j}(x_0)} g\varphi|\xi : \sigma_k| \dd \mu_k \right)^2.
\]
The function $W \colon (x,\tau) \in  B_{\varrho_j}(x_0) \times \Msn \mapsto g(x)\varphi(x) |\xi:\tau|\in [0,\infty)$ is continuous, positively $1$-homogeneous and convex in its second variable. Then, setting $\lambda_k:=\sigma_k\mu_k \wsto \lambda = \sigma \mu$ (see the proof of Proposition~\ref{prop:compactness}), we deduce from Reshetnyak's lower semicontinuity theorem (see, e.g.,~\cite[Theorem~2.38]{AmbrosioFuscoPallara00book}) that
\begin{align*}
\liminf_{k \to \infty}\int_{B_{\varrho_j}(x_0)} g\varphi|\xi : \sigma_k| \dd \mu_k &=\liminf_{k \to \infty}\int_{B_{\varrho_j}(x_0)} W(x,\sigma_k) \dd\mu_k\\
&=\liminf_{k \to \infty}\int_{B_{\varrho_j}(x_0)} W\left(x,\frac{\di\lambda_k}{\di\mu_k}\right) \dd\mu_k \\
&=\liminf_{k \to \infty}\int_{B_{\varrho_j}(x_0)} W\left(x,\frac{\di\lambda_k}{\di|\lambda_k|}\right) \dd|\lambda_k| \\
&\geq  \int_{B_{\varrho_j}(x_0)} W\left(x,\frac{\di\lambda}{\di|\lambda|}\right) \dd|\lambda|.
\end{align*}
Furthermore,
\begin{align*}
\int_{B_{\varrho_j}(x_0)} W\left(x,\frac{\di\lambda}{\di|\lambda|}\right) \dd|\lambda|
&=\int_{B_{\varrho_j}(x_0)} W\left(x,\frac{\di\lambda}{\di\mu}\right) \dd\mu\\
&=\int_{B_{\varrho_j}(x_0)} W(x,\sigma) \dd \mu \\
&=\int_{B_{\varrho_j}(x_0)} g\varphi|\xi : \sigma| \dd \mu,
\end{align*}
and thus
$$\gamma(B_{\varrho_j}(x_0)) \geq \frac12\left(\int_{B_{\varrho_j}(x_0)} g \varphi |\xi : \sigma| \dd \mu\right)^2.$$
Passing first to the supremum with respect to all $g \in \Crm_c(B_{\varrho_j}(x_0))$ with $\|g\|_{\Lrm^2(B_{\varrho_j}(x_0),\mu)}=1$
in the right-hand side of the previous inequality yields
$$\gamma(B_{\varrho_j}(x_0)) \geq \int_{B_{\varrho_j}(x_0)} \frac{\varphi^2}{2} (\xi : \sigma)^2 \dd \mu$$
and then over all $\varphi \in \Crm^\infty_c(B_{\varrho_j}(x_0))$ with $0 \leq \varphi \leq 1$ leads to
$$\gamma(B_{\varrho_j}(x_0)) \geq \int_{B_{\varrho_j}(x_0)} \frac12  (\xi : \sigma)^2 \dd \mu$$
for all $j\in\N$ and all $\xi \in \Msn$ with $\rho(\xi) \leq 1$. Dividing the previous inequality by $\mu(B_{\varrho_j}(x_0))$, letting $j \to \infty$ and using~\eqref{eq:lebpt}--\eqref{eq:lebpt2} leads to
$$\frac{\di \gamma}{\di\mu}(x_0) \geq  \frac12  (\xi : \sigma(x_0))^2.$$
Hence, by Lemma~\ref{lem:supR}, taking the supremum over all such $\xi$, we obtain
$$\frac{\di \gamma}{\di\mu}(x_0) \geq  \bar j^*(\sigma(x_0)).$$
Integrating the previous inequality with respect to $\mu$ over $\R^n$ leads to
\begin{equation}\label{eq:lb}
\lim_{k\to \infty} \int_{\overline\Omega} \frac12 |\sigma_k|^2 \dd \mu_k=\lim_{k \to \infty}\gamma_k(\R^n)\geq \gamma(\R^n)\geq \int_{\overline \Omega} \bar j^*(\sigma) \dd \mu,
\end{equation}
where we used that $\mu$ is supported in $\overline \Omega$. Thus, combining~\eqref{eq:extraction},~\eqref{eq:1403} and~\eqref{eq:lb},
$$\liminf_{\e \todown 0} \mathscr C_\e(\mu_\e)\geq \overline{\mathscr E}(\sigma,\mu)\geq \overline{\mathscr C}(\mu),$$
as required.
\end{proof}

\begin{rem}
In the two-dimensional case, there is an alternative way to proceed by replacing the quadratic approximation $Q_\xi$ by the more elementary one
\[
  \tau \in \Mstwo \mapsto q_\alpha(\tau):=\frac12|\tau|^2 + \alpha \det\tau,  \qquad \alpha \in \{-1,+1\},
\]
introduced in Remark~\ref{rem:qalpha}. Indeed, in that case, since $\tau_k\wto 0$ weakly in $\Lrm^2(B_{\varrho_j}(x_0);\Mstwo)$ and $\dive\tau_k \to 0$ strongly in $\Hrm^{-1}(B_{\varrho_j}(x_0);\R^2)$, the theory of compensated compactness ensures that $\det \tau_k \wsto 0$ in $\Dcal'(B_{\varrho_j}(x_0))$, hence
\begin{align*}
\gamma(B_{\varrho_j}(x_0)) &\geq \lim_{k \to \infty} \int_{B_{\varrho_j}(x_0)} \frac{\varphi^2}{2} \left[|\tau_k|^2 +2\alpha\det\tau_k\right]  \dd x\\
 &= \lim_{k \to \infty} \int_{B_{\varrho_j}(x_0)} \frac{\varphi^2}{2} \left[|\sigma_k|^2 +2\alpha\det\sigma_k\right]  \dd\mu_k.
\end{align*}
The remaining parts of the proof are identical, applying this time Reshetnyak's lower semicontinuity theorem to the function $(x,\tau) \in  B_{\varrho_j}(x_0) \times \Mstwo \mapsto g(x)\varphi(x) \sqrt{|\tau|^2+2\alpha \det \tau}\in [0,\infty)$, which is continuous, positively $1$-homogeneous, and convex in its second variable. We end up with the inequality
$$\frac{\di \gamma}{\di\mu}(x_0) \geq \frac12|\sigma(x_0)|^2 + \alpha\det\sigma(x_0) \qquad \text{for }\alpha=\pm1,$$
hence, maximizing over $\alpha=\pm 1$ and using~\eqref{eq:maxalpha},
$$\frac{\di \gamma}{\di\mu}(x_0) \geq \bar j^*(\sigma(x_0)).$$
\end{rem}

\begin{rem}\label{rem:liminf}
More generally, assume that for some measurable sets $A_\e \subset \Omega$, $\{\mu_\e=\frac{1}{\e}\LL^n \res A_\e\}_{\e>0}$ is a family in $\M^+(\R^n)$ such that $\mu_\e \wsto \mu$ in $\M(\R^n)$ for some $\mu \in \M^+(\R^n)$ ($A_\e$ does not need to belong to the class $\mathcal A_\e$, and $\mu_\e,\mu$ do not need to be probability measures). Also assume that $\sigma_\e \in \Lrm^2(\R^n,\mu_\e;\Msn)$ is such that $\sigma_\e\mu_\e \wsto \sigma\mu$ in $\M(\R^n;\Msn)$ and $\dive(\sigma_\e\mu_\e) \wto \dive(\sigma\mu)$ in $\Hrm^{-1}(\R^n;\R^n)$ for some $\sigma\in \Lrm^2(\R^n,\mu;\Msn)$. Then, the arguments in this section may be adapted to show that still we have the lower bound
$$\liminf_{\e \todown 0}\int_{\R^n} \frac12 |\sigma_\e|^2\dd \mu_\e \geq \int_{\R^n}\bar j^*(\sigma)\dd\mu.$$
\end{rem}

\section{Relaxation of the Kohn--Strang functional} \label{sc:relax}

The objective of this section is to clarify several results about the relaxation of the Kohn--Strang functional, which do not seem to be easily accessible.

Let $\alpha>0$, $\beta>0$ and let $h \colon \Msn \to \R$ be Kohn--Strang function defined by
$$h(\tau):=
\begin{cases}
\alpha|\tau|^2+\beta & \text{if }\tau \neq 0,\\
0 & \text{if } \tau=0,$$
\end{cases}$$
and let $Q_{\dive}h$ be its (symmetric) $\dive$-quasiconvexification, defined for $\tau \in \Msn$ by
\begin{align}
Q_{\dive} h(\tau) := \inf \, \setBB{\int_{(0,1)^n} h(\varphi)\dd x }{ &\varphi \in \Crm^\infty_{\rm per}((0,1)^n;\Msn), \notag\\
 &\int_{(0,1)^n}\varphi \dd x=\tau,\; \dive\varphi=0 \text{ in }\R^n }.  \label{eq:Qh}
\end{align}

It is remarkable that an explicit expression for $Q_{\dive}$ can be computed (see~\cite{KohnStrang86_1,KohnStrang86_2,KohnStrang86_3,Allaire02book,AllaireKohn93,AllaireBonnetierFrancfortJouve97}) in terms of the eigenvalues $\tau_1,\ldots,\tau_n$ of a matrix $\tau \in \Msn$, ordered by size as singular values, i.e., $|\tau_1|\leq\cdots\leq|\tau_n|$. Indeed,~\cite[Sections~4,7]{AllaireKohn93} established that with
\[
  \hat{\rho}(\tau) := \sqrt{\frac{\alpha}{\beta}} \, \rho^\circ(\tau),  \qquad \tau \in \Msn,
\]
where $\rho^\circ$ is defined in~\eqref{eq:WMi}, it holds that
\[
  Q_{\dive} h(\tau) = \begin{cases}
    \alpha |\tau|^2 + \beta  &\text{if } \hat{\rho}(\tau) \geq 1, \\
    \alpha |\tau|^2 + \beta \hat{\rho}(\tau) (2-\hat{\rho}(\tau))  &\text{if } \hat{\rho}(\tau) \leq 1.
  \end{cases}
\]
Then, one may compute, for $n=2$,
$$Q_{\dive} h(\tau)=
\begin{cases}
\alpha|\tau|^2+\beta & \text{if }\rho^\circ(\tau)\geq\sqrt{\beta/\alpha},\\
2\sqrt{\alpha\beta}\rho^\circ(\tau)-2\alpha|\tau_1\tau_2| & \text{if }\rho^\circ(\tau)<\sqrt{\beta/\alpha},
\end{cases}
$$
and, for $n=3$,
$$Q_{\dive}h(\tau)=
\begin{cases}
\displaystyle \alpha|\tau|^2+\beta&  \text{if } \rho^\circ(\tau) \geq \displaystyle \sqrt{\beta/\alpha},\\
\displaystyle 2\sqrt{\alpha\beta} \rho^\circ(\tau)-2 \alpha |\tau_1\tau_2|&  \text{if } 
\begin{cases}
\displaystyle\rho^\circ(\tau)<\sqrt{\beta/\alpha},\\
|\tau_1|+|\tau_2|\leq |\tau_3|,
\end{cases}\\
\displaystyle \begin{aligned} &2\sqrt{\alpha\beta}  \rho^\circ(\tau) \\[-2pt] &\; +\alpha\left({\textstyle\frac12}|\tau|^2-|\tau_1\tau_2|-|\tau_1\tau_3|-|\tau_2\tau_3|\right) \end{aligned} & \text{if }
\begin{cases}
\displaystyle\rho^\circ(\tau)<\sqrt{\beta/\alpha},\\
|\tau_1|+|\tau_2|> |\tau_3|.
\end{cases}
\end{cases}$$

We first prove that $Q_{\dive}h$ is indeed (symmetric) $\dive$-quasiconvex. This is not immediate from standard results like~\cite[Proposition~3.4]{FonsecaMuller99} since those require the function to be at least upper semicontinuous, which our $h$ is not.

\begin{lem} \label{lem:Qdivh}
The function  $Q_{\dive}h$ is (symmetric) $\dive$-quasiconvex.
\end{lem}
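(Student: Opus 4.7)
The difficulty is that $h$ fails to be upper semicontinuous at $\tau=0$: indeed $h(0)=0$ while $\liminf_{\tau\to 0}h(\tau)=\beta>0$. Hence one cannot directly invoke the classical result that the (symmetric) $\dive$-quasiconvex envelope of an upper semicontinuous function is itself (symmetric) $\dive$-quasiconvex. The plan is to verify the (symmetric) $\dive$-quasiconvexity of $Q_{\dive}h$ by a direct two-scale splicing construction, observing that the role played by upper semicontinuity in the classical proof can here be absorbed into the quadratic growth of $h$, which provides quantitative control of the boundary-layer errors.

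Fix $\tau\in\Msn$ and $\varphi\in\Crm^\infty_{\rm per}((0,1)^n;\Msn)$ with $\dive\varphi=0$ and $\int_{(0,1)^n}\varphi\,\di x=\tau$; we must show
\[
Q_{\dive}h(\tau)\leq\int_{(0,1)^n}Q_{\dive}h(\varphi(x))\,\di x.
\]
Given $\varepsilon>0$, a measurable selection argument applied to the Borel multifunction
\[
x\mapsto\setb{\psi\in\Crm^\infty_{\rm per}((0,1)^n;\Msn)}{\dive\psi=0,\;\textstyle\int\psi=\varphi(x)}
\]
produces a measurable family $\{\psi_x\}_x$ of periodic divergence-free fields with $\int\psi_x=\varphi(x)$ and $\int h(\psi_x)\leq Q_{\dive}h(\varphi(x))+\varepsilon$.

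Partition $(0,1)^n$ into sub-cubes $\{Q_j\}$ of side $\delta$ centered at $x_j$, and on each $Q_j$ place rescaled copies of $\psi_{x_j}$ oscillating on a finer scale $\eta\ll\delta$. In transition layers of width $\eta$ adjacent to $\partial Q_j$, use a matrix-potential interpolation -- representing divergence-free fields locally as curls of matrix potentials via Poincar\'e's lemma, interpolating the potentials of the $\psi_{x_j}$'s via a partition of unity subordinated to the $Q_j$'s, and then taking the curl -- to produce a globally periodic and divergence-free competitor $\Psi$ with mean $\tau$. Standard $\Lrm^2$-estimates on the potential correction together with the quadratic growth of $h$ yield
\[
\int_{(0,1)^n} h(\Psi)\,\di x \leq \int_{(0,1)^n} Q_{\dive}h(\varphi)\,\di x + C\,\frac{\eta}{\delta} + \varepsilon,
\]
where the first error comes from the transition layers of total measure $O(\eta/\delta)$. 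Sending $\eta/\delta\to 0$, then $\varepsilon\to 0$, and finally taking the infimum over $\varphi$ gives the desired inequality, and hence the $\dive$-quasiconvexity of $Q_{\dive}h$.

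The main technical obstacle is the matrix-potential construction: one must represent the divergence-free $\Lrm^2$-fields $\psi_{x_j}$ as curls of potentials (locally or on cubes with suitable boundary conditions) and control the $\Lrm^2$-norm of the interpolation correction in the thin transition layers. This is classical for $\Acal$-free fields (see the analogous constructions in~\cite{FonsecaMuller99} and the proof of~\cite[Lemma~2.5]{ContiMullerOrtiz20}), but requires careful bookkeeping. Once this correction is in place, the failure of upper semicontinuity of $h$ at the origin plays no role, since the transition-layer measure shrinks to zero and the quadratic growth of $h$ ensures that the associated contribution to $\int h(\Psi)$ vanishes with $\eta/\delta$.
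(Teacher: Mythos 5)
Your proposal takes a genuinely different route from the paper, and while the guiding idea is sound, the execution has a real gap.

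The paper sidesteps the failure of upper semicontinuity entirely by monotone approximation: it introduces the continuous truncations $h_M(\tau)=\min\{\alpha|\tau|^2+\beta,\,M|\tau|^2\}$, which increase pointwise to $h$, invokes~\cite[Proposition~3.4]{FonsecaMuller99} to get that each $Q_{\dive}h_M$ is (symmetric) $\dive$-quasiconvex, cites~\cite{AllaireBonnetierFrancfortJouve97} for the pointwise convergence $Q_{\dive}h_M\toup Q_{\dive}h$, and then observes that an increasing pointwise supremum of $\dive$-quasiconvex functions is $\dive$-quasiconvex (Jensen's inequality passes to the limit by monotone convergence). This is short and leans entirely on existing results.

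Your plan is to re-prove the Jensen inequality directly by a two-scale splicing and exploit the quadratic growth of $h$ to control the transition-layer error, so that the discontinuity of $h$ at the origin is harmless because you always bound $h(\Psi)\leq\alpha|\Psi|^2+\beta$ there. The insight that growth can substitute for upper semicontinuity in this step is reasonable. However, the step you flag as ``classical'' is where the gap lies: for \emph{symmetric} divergence-free matrix fields, the potential representation is not a first-order curl. Writing each row of $\sigma$ as $\operatorname{curl}A_i$ gives row-wise divergence-free fields, but gluing these via a partition of unity and taking curls does \emph{not} produce a symmetric matrix field, so the resulting competitor is inadmissible in the class $\Crm^\infty_{\rm per}((0,1)^n;\Msn)$ appearing in~\eqref{eq:Qh}. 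The correct potentials are second order --- the Airy stress function $\sigma=\cof(D^2\phi)$ in $n=2$, the Beltrami representation in $n=3$ --- and the interpolation and scaling estimates then have to be redone with second derivatives of cutoffs against $\eta^2$-size potentials. That can likely be made to work, but it is not a bookkeeping exercise; and even granting it, you would still owe a mean-correction to ensure $\int\Psi=\tau$ exactly and a clean separation of the $\delta$- and $\eta$-dependent errors (your single error term $C\eta/\delta$ conflates the Riemann-sum error, which only vanishes as $\delta\to0$, with the layer error). By comparison, the paper's route avoids all of this. As a side remark, the measurable-selection argument you invoke is unnecessary: you only need near-optimal microstructures $\psi_{x_j}$ at the finitely many cube centers, which exist by definition of the infimum.
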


\begin{proof}
We introduce, for all $M>0$, the function $h_M \colon \Msn \to \R$ defined by
$$h_M(\tau):=\min\bigl\{\alpha|\tau|^2+\beta,M|\tau|^2\bigr\}, \qquad \tau \in \Msn.$$
Note that $h_M$ is continuous and that it pointwise increases to $h$ as $M \to \infty$.  According to~\cite[Proposition~3.4]{FonsecaMuller99}, it follows that the (symmetric) $\dive$-quasiconvexification of $h_M$ given, for all $\tau \in \Msn$, by an analogous formula to~\eqref{eq:Qh}, is (symmetric) $\dive$-quasiconvex. Moreover, from the proof of~\cite[Theorem~3.1]{AllaireBonnetierFrancfortJouve97} (see pages 38--39 in \textit{loc.\ cit.}) we know that $Q_{\dive}h_M \toup Q_{\dive}h$ as $M \to \infty$. As a consequence, $Q_{\dive}h$ is the supremum of (symmetric) $\dive$-quasiconvex functions and hence it is (symmetric) $\dive$-quasiconvex as well.
\end{proof}

The following proposition is a relaxation result for the Kohn--Strang functional under a soft divergence-constraint. This is not easily found in the literature and does not follow from the standard relaxation results in the context of $\Acal$-quasiconvexity (see~\cite{FonsecaMuller99,BraidesFonsecaLeoni00}) since the integrand $h$ is not continuous. We give a complete proof.

\begin{prop}\label{prop:KS}
Let $\Omega\subset \R^n$ be a bounded open set with Lipschitz boundary. For all $\sigma \in \Lrm^2(\Omega;\Msn)$, the following assertions hold:
\begin{enumerate}[(i)]
\item For all sequences $\{\sigma_k\}_{k\in \N} \subset \Lrm^2(\Omega;\Msn)$ such that $\sigma_k\wto\sigma$ in $\Lrm^2(\Omega;\Msn)$ and $\dive\sigma_k\to\dive\sigma$ in $[\Hrm^1(\Omega;\R^n)]^*$,
$$\liminf_{k\to\infty}\int_\Omega h(\sigma_k)\dd x\geq\int_\Omega Q_{\dive}h(\sigma)\dd x.$$
\item There exists a sequence $\{\bar \sigma_k\}_{k\in\N} \subset \Lrm^2(\Omega;\Msn)$ such that $\bar \sigma_k\wto\sigma$ in $\Lrm^2(\Omega;\Msn)$, $\dive\bar\sigma_k\to\dive\sigma$ in $[\Hrm^1(\Omega;\R^n)]^*$, and
$$\lim_{k\to\infty}\int_\Omega h(\bar\sigma_k)\dd x=\int_\Omega Q_{\dive}h(\sigma)\dd x.$$
\end{enumerate}
\end{prop}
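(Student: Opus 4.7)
The proof naturally splits into the lower bound (i) and the construction of a recovery sequence (ii).

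For (i), the strategy is truncation. The functions $h_M(\tau) := \min\{\alpha|\tau|^2+\beta,\,M|\tau|^2\}$ introduced in the proof of Lemma~\ref{lem:Qdivh} are continuous with quadratic growth, satisfy $h_M \leq h$, and increase pointwise to $h$. Their $\dive$-quasiconvex envelopes $Q_{\dive}h_M$ are continuous, $\dive$-quasiconvex, and quadratically bounded, so the classical $\Acal$-quasiconvex lower semicontinuity theorem of Fonseca--M\"uller~\cite[Theorem~3.7]{FonsecaMuller99}, applied with $\Acal=\dive$ under the convergence $\sigma_k\wto\sigma$ in $\Lrm^2$ and $\dive\sigma_k\to\dive\sigma$ in $[\Hrm^1]^*$ (which embeds continuously in $\Hrm^{-1}$), yields
\[
\liminf_{k\to\infty}\int_\Omega h(\sigma_k)\dd x \;\geq\; \liminf_{k\to\infty}\int_\Omega h_M(\sigma_k)\dd x \;\geq\; \int_\Omega Q_{\dive}h_M(\sigma)\dd x,
\]
using $h\geq h_M\geq Q_{\dive}h_M$. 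Sending $M\to\infty$ and invoking $Q_{\dive}h_M\toup Q_{\dive}h$ (from the proof of Lemma~\ref{lem:Qdivh}) together with monotone convergence closes (i).

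For (ii), I would build the recovery sequence through the classical three-step progression: constant, piecewise constant, general $\Lrm^2$. In the constant case $\sigma\equiv\tau$, the definition of $Q_{\dive}h(\tau)$ provides, for every $\eta>0$, a $\dive$-free periodic $\varphi\in\Crm^\infty_{\mathrm{per}}((0,1)^n;\Msn)$ with mean $\tau$ and $\int_{(0,1)^n}h(\varphi)\dd x \leq Q_{\dive}h(\tau)+\eta$; extending $\Z^n$-periodically and rescaling $\varphi_k(x):=\varphi(kx)$ gives $\varphi_k\wto\tau$ in $\Lrm^2(\Omega;\Msn)$, $\dive\varphi_k=0$ identically, and $\int_\Omega h(\varphi_k)\dd x\to|\Omega|(Q_{\dive}h(\tau)+\eta)$ by periodic averaging, so a diagonal argument in $\eta$ gives the recovery. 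In the piecewise constant case $\sigma=\sum_i\tau_i\chi_{Q_i}$ on a cubic partition of $\Omega$ (up to a boundary-layer error that vanishes upon refinement), I would apply the constant case to each $Q_i$ (after affine rescaling from $(0,1)^n$), obtaining oscillatory $\dive$-free $\sigma^{(i)}_k$ with mean $\tau_i$, and then introduce cutoffs $\eta_\delta^{(i)}\in\Crm^\infty_c(Q_i)$ equal to $1$ outside the $\delta$-neighborhood of $\partial Q_i$ and vanishing on $\partial Q_i$. Setting $\hat\sigma^{(i)}_k:=\eta_\delta^{(i)}\sigma^{(i)}_k+(1-\eta_\delta^{(i)})\tau_i$ and $\sigma_k:=\sum_i\chi_{Q_i}\hat\sigma^{(i)}_k$, the jumps of $\sigma_k$ at the cell interfaces now agree with those of $\sigma$, so $\dive(\sigma_k-\sigma)$ is a bulk quantity supported in the buffer layers and given by
\[
\dive(\sigma_k-\sigma)|_{Q_i}=(\sigma^{(i)}_k-\tau_i)\nabla\eta_\delta^{(i)}.
\]
This converges weakly to $0$ in $\Lrm^2(\Omega;\R^n)$; since the embedding $\Lrm^2\hookrightarrow[\Hrm^1(\Omega;\R^n)]^*$ is compact (the adjoint, via Schauder, of the Rellich--Kondrachov compact inclusion $\Hrm^1\hookrightarrow\Lrm^2$), this weak convergence is automatically strong in $[\Hrm^1(\Omega;\R^n)]^*$. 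The buffer energy cost is $O(\delta)$ in measure, so taking $\delta\todown 0$ along a suitable subsequence and diagonalizing yields the recovery. Finally, for general $\sigma\in\Lrm^2(\Omega;\Msn)$, I would approximate strongly in $\Lrm^2$ by piecewise constants $\sigma^n$: strong $\Lrm^2$ convergence gives both $\dive\sigma^n\to\dive\sigma$ in $[\Hrm^1]^*$ (via $\|\dive\psi\|_{[\Hrm^1]^*}\leq\|\psi\|_{\Lrm^2}$) and $\int_\Omega Q_{\dive}h(\sigma^n)\dd x\to\int_\Omega Q_{\dive}h(\sigma)\dd x$ (by continuity and quadratic growth of $Q_{\dive}h$ with dominated convergence), and a standard diagonal argument closes the construction.

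The central obstacle is the piecewise constant step: a naive concatenation of the per-cube constructions leaves uncontrolled distributional contributions to the divergence along the cell interfaces, so $\dive\sigma_k$ need not converge strongly in $[\Hrm^1]^*$. The cutoff construction above resolves this by matching the interfacial jumps of $\sigma$ exactly, moving the divergence discrepancy into a thin bulk layer where the compact embedding $\Lrm^2\hookrightarrow[\Hrm^1(\Omega;\R^n)]^*$ supplies the required strong convergence.
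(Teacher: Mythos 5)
Your proof is correct, but it takes a genuinely different route from the paper. The paper embeds $\Omega$ in a large ball $B$, introduces the localized relaxed functional $\overline H(\sigma;A)$, and invokes the abstract integral representation result of Ansini, Dal Maso \& Zeppieri to obtain a Carath\'eodory, (symmetric) $\dive$-quasiconvex integrand $\bar h(x,\tau)$; it then shows by translation invariance that $\bar h$ does not depend on $x$, and identifies $\bar h=Q_{\dive}h$ by a two-sided comparison: $\bar h\leq Q_{\dive}h$ from $\bar h\leq h$ and the definition of the envelope, $\bar h\geq Q_{\dive}h$ from Lemma~\ref{lem:Qdivh} and Fonseca--M\"uller. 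Both (i) and (ii) then drop out of $\overline H(\sigma;\Omega)=\int_\Omega Q_{\dive}h(\sigma)\,\di x$. Your approach is the classical hands-on alternative. For (i), your truncation-by-$h_M$ argument followed by monotone convergence is a sound variant of the paper's (which applies Fonseca--M\"uller to $Q_{\dive}h$ directly, using that $Q_{\dive}h$ is continuous and $\dive$-quasiconvex by Lemma~\ref{lem:Qdivh}). For (ii), you rebuild the constant / piecewise-constant / general $\Lrm^2$ ladder, and you correctly identify the genuine obstruction (uncontrolled interfacial contributions to the divergence under naive concatenation) and resolve it with the cutoff $\eta_\delta^{(i)}$ so that $\dive(\sigma_k-\sigma)$ becomes a bulk term, which tends weakly to zero in $\Lrm^2$ and hence strongly in $[\Hrm^1]^*$ by compactness of the embedding.

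Two details are worth making explicit, though neither breaks the argument. First, in the constant case the periodic-averaging identity $\int_\Omega h(\varphi(kx))\,\di x \to \LL^n(\Omega)\int_{(0,1)^n}h(\varphi)\,\di y$ is being applied to the composition $h\circ\varphi$, which is merely bounded and measurable (since $h$ is discontinuous at $0$); periodic averaging does hold for $\Lrm^\infty_{\mathrm{per}}$ functions, but you should say so. Second, the $O(\delta)$ estimate for the buffer energy is uniform in $k$ only because the oscillating field $\sigma^{(i)}_k$ is uniformly bounded in $\Lrm^\infty$ — this is inherited from the fixed smooth periodic competitor $\varphi$ chosen at tolerance $\eta$ — which forces the diagonalization to be in $(\eta,\delta,k)$ with $\delta$ taken to zero last for each fixed $\eta$. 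With these points spelled out, your construction is a complete and more elementary proof of (ii), trading the paper's heavy integral-representation machinery (and its remark that the Ansini--Dal Maso--Zeppieri theorem must be re-checked for symmetric-valued fields) for a more laborious, explicit diagonal argument.
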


\begin{proof}
Let $B$ be a large ball in $\R^n$ such that $\overline\Omega\subset B$. Since $\Omega$ is an $\Hrm^1$-extension domain, it follows that any element of $[\Hrm^1(\Omega;\R^n)]^*$ can be identified with an element of $\Hrm^{-1}(B;\R^n)$, the dual space of $\Hrm^1_0(B;\R^n)$. 

Let $\Ocal(B)$ be the family of all open subsets of $B$. Let us introduce the functional $H \colon \Lrm^2(B;\Msn)\times \Ocal(B) \to [0,\infty)$, defined by 
$$H(\sigma;A):=\int_A h(\sigma)\dd x,$$
and its relaxation $\overline H \colon \Lrm^2(B;\Msn) \times \Ocal(B)\to [0,\infty)$, given by
\begin{align*}
\overline H(\sigma;A):=\inf \, \setBB{\liminf_{k \to\infty} H(\sigma_k;A) }{ &\sigma_k \to \sigma \text{ in }\Hrm^{-1}(A;\Msn), \\
&\dive\sigma_k \to \dive\sigma \text{ in }\Hrm^{-1}(A;\R^n) }.
\end{align*}
According to the integral representation result of~\cite[Theorem~2.3 \& Remark~2.4]{AnsiniDalMasoZeppieri14} (this result is actually stated for non-symmetric matrices, however, a careful inspection of the proof shows that it remains valid for symmetric matrices), there exists a Carath\'eodory function $\bar h \colon B \times \Msn \to \R$, which is (symmetric) $\dive$-quasiconvex in the second variable, such that for all $A \in \Ocal(B)$,
$$\overline H(\sigma;A)=\int_A \bar h(x,\sigma)\dd x, \qquad \sigma\in \Lrm^2(B;\Msn).$$

Let us first show that $\bar h$ is actually independent of the spatial variable $x$. Indeed, let $x_0$, $y_0 \in B$ and $\varrho>0$ be such that $B_\varrho(x_0) \cup B_\varrho(y_0) \subset B$ and let $\tau\in \Msn$ be a fixed matrix. For every sequence $\{\sigma_k\}_{k \in \N}$ in $\Lrm^2(B_\varrho(x_0);\Msn)$ such that $\sigma_k \to \tau$ in $\Hrm^{-1}(B_\varrho(x_0);\Msn)$ and $\dive\sigma_k \to \dive \tau=0$ in $\Hrm^{-1}(B_\varrho(x_0);\R^n)$, we define
\[
  \tilde \sigma_k(y):=\sigma_k(\frarg -y_0+x_0) \in \Lrm^2(B_\varrho(y_0);\Msn),
\]
which satisfies
\[
  \tilde \sigma_k \to \tau \quad\text{in $\Hrm^{-1}(B_\varrho(y_0);\Msn)$,} \qquad
  \dive\tilde\sigma_k \to 0 \quad\text{in $\Hrm^{-1}(B_\varrho(y_0);\R^n)$.}
\]
Using that the original Kohn--Strang integrand $h$ is spatially independent and the change of variables $y=x-x_0+y_0$, we get that
$$\overline H(\tau;B_\varrho(y_0)) \leq \liminf_{k \to\infty}\int_{B_\varrho(y_0)} h(\tilde\sigma_k(y))\dd y=\liminf_{k \to\infty}\int_{B_\varrho(x_0)} h(\sigma_k(x))\dd x.$$
Passing to the infimum among all minimizing sequences $\{\sigma_k\}_{k \in \N}$ as above, we get
$$\overline H(\tau;B_\varrho(y_0)) \leq \overline H(\tau;B_\varrho(x_0)).$$
A similar argument shows that the opposite inequality holds, so that
$$\int_{B_\varrho(y_0)} \bar h(x,\tau)\dd x=\overline H(\tau;B_\varrho(y_0)) = \overline H(\tau;B_\varrho(x_0))=\int_{B_\varrho(x_0)} \bar h(x,\tau)\dd x.$$
Dividing the previous inequality by $\varrho^n$ and passing to the upper limit as $\varrho\to 0$ (cf.\ formula (2.9) in~\cite{AnsiniDalMasoZeppieri14}) leads to 
$$\bar h(x_0,\tau)=\bar h(y_0,\tau),$$
proving that $\bar h$ is spatially independent.

We next show that 
\begin{equation}\label{eqbarhdiv}
\bar h=Q_{\dive}h.
\end{equation} 
Note that the compact embedding of $\Lrm^2(B;\Msn)$ into $\Hrm^{-1}(B;\Msn)$ and the coercivity property $H(\frarg;B) \geq c\|\frarg\|^2_{\Lrm^2(B;\Msn)}$ for some constant $c > 0$, ensure that
\begin{align*}
\overline H(\sigma;B)=\inf \, \setBB{\liminf_{k \to\infty} H(\sigma_k;B) }{ &\sigma_k \wto \sigma \text{ in }\Lrm^2(B;\Msn),\\
&\dive\sigma_k \to \dive\sigma \text{ in }\Hrm^{-1}(B;\R^n) }.
 \end{align*}
First of all, we observe that since $\overline H \leq H$, then $\bar h \leq h$. Using that $\bar h$ is (symmetric) $\dive$-quasiconvex, the first inequality $\bar h \leq Q_{\dive} h$ follows directly from~\eqref{eq:Qh}. To prove the converse inequality, we recall that $Q_{\dive}h$ is (symmetric) $\dive$-quasiconvex by Lemma~\ref{lem:Qdivh} and, by the explicit expression above, $Q_{\dive}h$ is also continuous. Then,~\cite[Theorem~3.7]{FonsecaMuller99} ensures that
$$\liminf_{k\to\infty}\int_B h(\sigma_k)\dd x\geq\liminf_{k\to\infty}\int_B Q_{\dive}h(\sigma_k)\dd x\geq\int_B Q_{\dive}h(\sigma)\dd x$$
for any sequence  $\{\sigma_k\}_{k\in \N}$ in $\Lrm^2(B;\Msn)$ such that $\sigma_k\wto\sigma$ in $\Lrm^{2}(B;\Msn)$ and $\dive\sigma_k\to\dive\sigma$ in $\Hrm^{-1}(B;\R^n)$. This shows that 
$$\int_B Q_{\dive}h(\sigma)\dd x \leq \int_B \bar h(\sigma)\dd x, \qquad \sigma\in \Lrm^2(B;\Msn),$$
and thus $Q_{\dive}h\leq \bar h$.

To complete the proof of the lower bound inequality, let us consider an arbitrary sequence $\{\sigma_k\}_{k\in \N}$ in $\Lrm^2(\Omega;\Msn)$ such that $\sigma_k\wto\sigma$ in $\Lrm^2(\Omega;\Msn)$ and $\dive\sigma_k\to\dive\sigma$ in $[\Hrm^1(\Omega;\R^n)]^*$. Then, $\sigma_k\chi_\Omega \wto\sigma\chi_\Omega$ in $\Lrm^2(B;\Msn)$ and $\dive(\sigma_k\chi_\Omega)\to\dive(\sigma\chi_\Omega)$ in $\Hrm^{-1}(B;\R^n)$, hence~\eqref{eqbarhdiv} shows that
$$\liminf_{k\to\infty}\int_B h(\sigma_k\chi_\Omega)\dd x\geq\int_B Q_{\dive}h(\sigma\chi_\Omega)\dd x.$$
Using that $h(0)=Q_{\dive}h(0)=0$ yields the lower bound
\begin{equation} \label{eq:Qh_lower}
  \liminf_{k\to\infty}\int_\Omega h(\sigma_k)\dd x\geq\int_\Omega Q_{\dive}h(\sigma)\dd x.   
\end{equation}

For the upper bound,~\eqref{eqbarhdiv} ensures the existence of a recovery sequence $\{\bar \sigma_k\}_{k\in\N}$ in $\Lrm^2(B;\Msn)$ such that $\bar \sigma_k\wto\sigma\chi_\Omega$ in $\Lrm^2(B;\Msn)$, $\dive\bar\sigma_k\to\dive(\sigma\chi_\Omega)$ in $\Hrm^{-1}(B;\R^n)$, and
$$\lim_{k\to\infty}\int_B h(\bar\sigma_k)\dd x=\int_B Q_{\dive}h(\sigma\chi_\Omega)\dd x.$$
In particular, $\bar \sigma_k\wto\sigma$ in $\Lrm^2(\Omega;\Msn)$, $\dive\bar\sigma_k\to\dive\sigma$ in $[\Hrm^1(\Omega;\R^n)]^*$, and, using again that $Q_{\dive}h(0)=0$,
$$\limsup_{k\to\infty}\int_\Omega h(\bar\sigma_k)\dd x\leq \int_\Omega Q_{\dive}h(\sigma)\dd x.$$
When combined with~\eqref{eq:Qh_lower}, we have thus shown the existence of a recovery sequence.
\end{proof}

\begin{rem} \label{rem:terrible}
Proving an analogous relaxation result with the hard constraint of a prescribed divergence, which was seemingly used implicitly in~\cite{Olbermann20}, in place of the soft constraint of a strongly $[\Hrm^1(\Omega;\R^n)]^*$-converging divergence, is not straightforward because of the lack of continuity of $h$. Indeed, the available results in that direction (see, e.g.,~\cite[Theorem~1.1]{BraidesFonsecaLeoni00},~\cite[Theorem~2.2]{AnsiniGarroni07} or~\cite[Theorem~3.3]{AnsiniDalMasoZeppieri14}) all seem to require the continuity of the integrand.

When the continuity of $h$ is missing, the lower bound remains valid, but there is an issue in proving the existence of a recovery sequence satisfying the divergence constraint \emph{exactly}. Setting $f:=-\dive \sigma$, the usual argument is to correct the recovery sequence $\{\bar\sigma_k\}_{k\in \N}$ obtained in Proposition~\ref{prop:KS} by considering the unique solution $v_k \in \Hrm^1(\Omega;\R^n)/\Rcal$ of $-\dive e(v_k) = f + \dive \bar\sigma_k$, i.e.,
$$\int_\Omega e(v_k):e(w)\dd x = \dprb{ f+\dive\bar\sigma_k,w }, \qquad w \in \Hrm^1(\Omega;\R^n)/\Rcal.$$
Since $f+\dive\bar\sigma_k \to 0$ in $[\Hrm^1(\Omega;\R^n)]^*$, we have that $e(v_k) \to 0$ in $\Lrm^2(\Omega;\Msn)$. Considering next $\hat \sigma_k :=\bar\sigma_k+e(v_k) \wto \sigma$ in $\Lrm^2(\Omega;\Msn)$, we have $-\dive\hat\sigma_k=f$ in $[\Hrm^1(\Omega;\R^n)]^*$. The problem now is to ensure that
$$\int_\Omega h(\bar \sigma_k)\dd x \quad \text{and }\quad \int_\Omega h(\hat \sigma_k)\dd x$$
have the same limit. This is the place where it seems some form of continuity property of $h$ is required, which is, however, missing in our situation.

One could also try to approximate $h$ by the continuous functions $h_M$ as in the proof of Proposition~\ref{prop:KS}. For $h_M$, we do have a relaxation result with prescribed divergence-constraint because $h_M$ is continuous (see, e.g.,~\cite[Theorem~1.1]{BraidesFonsecaLeoni00} when $f=0$). The new difficulty now is that there is a double limit to be taken as $k \to \infty$ and $M \to \infty$ and it is not clear whether the limits commute. According to~\cite{Allaire02book,AllaireBonnetierFrancfortJouve97,AllaireKohn93}, this is expected to be true, but we could not locate any reference proving precisely this result.
\end{rem}

The lack of general relaxation result for the Kohn--Strang functional with prescribed divergence necessitates an alternative approach of the limsup-inequality in~\cite{Olbermann20}. We consider the particular Kohn--Strang function
$$h_\e(\tau):=
\begin{cases}
\frac{\e}{2}|\tau|^2+\frac{1}{2\e} & \text{if }\tau \neq 0,\\
0 & \text{if } \tau=0,$$
\end{cases}$$
corresponding to $\alpha=\e/2$ and $\beta=1/(2\e)$. Note that, in that case, we have that \[
   Q_{\dive}h_\e \to \rho^\circ \quad\text{pointwise as $\e \todown 0$,}
\]
where $\rho^\circ$ is given by~\eqref{eq:WMi}.

\begin{prop}\label{prop:Olbermann}
Given a bounded $\Crm^2$-domain  $\Omega$ in $\R^n$ ($n=2,3$), let $f \in \M(\R^n;\R^n) \cap \Hrm^{-1}(\R^n;\R^n)$ be such that $\supp(f)\subset\overline\Omega$ and $\langle f,r\rangle=0$ for all $r \in \mathcal R$. For every $\lambda \in \M(\overline\Omega;\Msn)$ satisfying $-\dive\lambda=f$ in $\Dcal'(\R^n;\R^n)$, there exists a sequence $\{\lambda_\e\}_{\e>0}$ in $\Lrm^2(\Omega;\Msn)$ such that $\lambda_\e \wsto \lambda$ in $\M(\R^n;\Msn)$, $-\dive\lambda_\e \to f$ in $\Hrm^{-1}(\R^n;\R^n)$, and
$$\lim_{\e \todown 0} \int_\Omega h_\e(\lambda_\e)\dd x = \int_{\R^n} \rho^\circ \left(\frac{\di\lambda}{\di|\lambda|}\right)\dd|\lambda|.$$
\end{prop}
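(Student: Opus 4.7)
The plan is to combine the structure-preserving mollification of Section~\ref{s:mollification} with the relaxation result for the Kohn--Strang functional (Proposition~\ref{prop:KS}), glued together by a diagonal argument. First, I would apply Proposition~\ref{prop:properies_mollification2} (together with Remark~\ref{rem:2.9} for additional smoothing) to $\lambda$, producing a family $\{\lambda^\delta\}_{\delta \in (0,\delta_0)} \subset \Crm^\infty_c(\Omega;\Msn)$ with $\lambda^\delta \wsto \lambda$ in $\M(\R^n;\Msn)$ and $|\lambda^\delta|(\Omega) \to |\lambda|(\overline\Omega)$. Since $\rho^\circ$ is continuous, nonnegative, and positively $1$-homogeneous, the total-variation convergence allows an application of Reshetnyak's continuity theorem to give
\[
\int_\Omega \rho^\circ(\lambda^\delta)\dd x \;\longrightarrow\; \int_{\R^n} \rho^\circ\!\left(\frac{\di\lambda}{\di|\lambda|}\right)\dd|\lambda| \qquad\text{as $\delta \todown 0$.}
\]
For the divergence, Proposition~\ref{prop:W1pR*} produces $G \in \Lrm^2(\Omega;\Msn)$ with $-f=\dive G$, and the commutativity of the mollification with divergence (via the Piola-identity argument used in the proof of Proposition~\ref{prop:properies_mollification2}(iv)) gives $\dive \lambda^\delta = (\dive\lambda)^\delta = (\dive G)^\delta = \dive G^\delta$. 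An $\Lrm^2$-version of Proposition~\ref{prop:properies_mollification2}(v), which follows by the same density argument, yields $G^\delta \to G$ strongly in $\Lrm^2$, hence $\dive \lambda^\delta \to -f$ strongly in $\Hrm^{-1}(\R^n;\R^n)$.

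Next, for each fixed $\delta, \e > 0$, I would apply Proposition~\ref{prop:KS}(ii) with $\sigma = \lambda^\delta$ and integrand $h_\e$ to obtain a sequence $\{\bar\sigma^{\delta,\e}_k\}_k \subset \Lrm^2(\Omega;\Msn)$ with $\bar\sigma^{\delta,\e}_k \wto \lambda^\delta$ in $\Lrm^2$, with $\dive \bar\sigma^{\delta,\e}_k \to \dive \lambda^\delta$ strongly in $[\Hrm^1(\Omega;\R^n)]^*$ (and hence in $\Hrm^{-1}(\R^n;\R^n)$ via the natural continuous embedding for Lipschitz $\Omega$), and with $\int_\Omega h_\e(\bar\sigma^{\delta,\e}_k)\dd x \to \int_\Omega Q_{\dive}h_\e(\lambda^\delta)\dd x$ as $k \to \infty$. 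Since $\lambda^\delta$ is bounded, the explicit formula for $Q_{\dive}h_\e$ recalled at the beginning of Section~\ref{sc:relax} gives
\[
Q_{\dive}h_\e(\lambda^\delta) = \rho^\circ(\lambda^\delta) + \tfrac{\e}{2}\bigl(|\lambda^\delta|^2 - \rho^\circ(\lambda^\delta)^2\bigr)
\]
pointwise in $\Omega$ once $\e$ is small enough (depending on $\|\lambda^\delta\|_\infty$), so $\int_\Omega Q_{\dive}h_\e(\lambda^\delta)\dd x \to \int_\Omega \rho^\circ(\lambda^\delta)\dd x$ as $\e \todown 0$. A diagonal extraction, choosing $\delta(\e) \todown 0$ sufficiently slowly that $\e \|\lambda^{\delta(\e)}\|_\infty^2 \to 0$ (e.g., $\delta(\e) = \e^{1/(3n)}$, using the elementary bound $\|\lambda^\delta\|_\infty \lesssim \delta^{-n}$) and then $k(\e)$ large enough, yields $\lambda_\e := \bar\sigma^{\delta(\e),\e}_{k(\e)}$ satisfying all three required convergences.

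The main obstacle is the upgrade in the first step from weak* convergence of the divergences in $\M(\R^n;\R^n)$ (as directly given by Proposition~\ref{prop:properies_mollification2}(iv)) to strong convergence in $\Hrm^{-1}(\R^n;\R^n)$, which is what the statement requires. This is handled by the factorization $\dive \lambda^\delta = \dive G^\delta$ via Proposition~\ref{prop:W1pR*} outlined above, together with a routine $\Lrm^2$-extension of Proposition~\ref{prop:properies_mollification2}(v); the latter follows because $\eta$ is a probability kernel and the Jacobian factors are uniformly bounded, giving the operator estimate $\|G^\delta\|_{\Lrm^2} \leq C\|G\|_{\Lrm^2}$, which combined with density of $\Crm_c$ in $\Lrm^2$ yields the desired convergence.
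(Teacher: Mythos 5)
Your argument is correct and follows the same skeleton as the paper's proof: the structure-preserving mollification of Section~\ref{s:mollification}, Reshetnyak continuity for $\rho^\circ$, pointwise convergence $Q_{\dive}h_\e \to \rho^\circ$, and two diagonal extractions glued to Proposition~\ref{prop:KS}. The one place you genuinely deviate is in how the divergence constraint survives the mollification. The paper writes $\lambda = \bar\lambda + F$ with $F \in \Lrm^2(\Omega;\Msn)$ the representative of $f$ from Proposition~\ref{prop:W1pR*}, mollifies only the divergence-free part $\bar\lambda$, and sets $\lambda_\delta := \bar\lambda^\delta + F$; then $-\dive\lambda_\delta = f$ holds \emph{exactly} for every $\delta$, and the only thing left to check is the total-variation convergence of $\lambda_\delta = \lambda^\delta - F^\delta + F$ (which follows from the linearity of the mollifier and the $\Lrm^1$-convergence $F^\delta \to F$). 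You instead mollify $\lambda$ itself, so you must show that $\dive\lambda^\delta$ converges \emph{strongly} in $\Hrm^{-1}(\R^n;\R^n)$; your observation that $\dive\lambda^\delta = \dive G^\delta$ (since both are the same vector-valued regularization of the common divergence $-f$, via Step~4 of the proof of Proposition~\ref{prop:properies_mollification2}) together with the $\Lrm^2$-continuity of the mollifier is correct and does the job. Both routes are valid; the paper's decomposition buys you the exact constraint and lets the argument proceed with no rate-tracking, while yours is a notch more direct (no need to split off $F$) at the cost of an extra limit to control when diagonalizing. Your explicit rate $\|\lambda^\delta\|_\infty \lesssim \delta^{-n}$ and the choice $\delta(\e) = \e^{1/(3n)}$ are fine but unnecessary: for each fixed $\delta$ the smooth field $\lambda^\delta$ is bounded, so $\int_\Omega Q_{\dive}h_\e(\lambda^\delta)\dd x \to \int_\Omega \rho^\circ(\lambda^\delta)\dd x$ by dominated convergence (dominating by $\rho^\circ(\lambda^\delta) + \tfrac12|\lambda^\delta|^2$), and then an abstract diagonal extraction over $(\e,\delta)$ suffices.
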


\begin{proof}
By Proposition~\ref{prop:W1pR*} there is an $F \in \Lrm^2(\Omega;\Msn)$ with $-\dive F = f$ in $\Dcal'(\R^n;\R^n)$, i.e.,
\[
  \dprb{ f, v } = \int_\Omega F : e(v) \dd x , \qquad  v \in \Hrm^1(\R^n;\R^n)
\]
and
\[
  \|F\|_{\Lrm^2(\Omega;\Msn)} = \|f\|_{\Hrm^{-1}(\R^n;\R^n)}.
\]
Let $\bar\lambda := \lambda - F \in \M(\overline\Omega;\Msn)$ and let $\bar\lambda^\delta\in \Crm_c(\Omega;\Msn)$ be the approximation of $\bar\lambda$ given by Proposition~\ref{prop:properies_mollification2}, for which $-\dive \bar\lambda^\delta = 0$ in $\Dcal'(\R^n;\R^n)$. Define
\[
  \lambda_\delta := \bar\lambda^\delta + F \in \Lrm^2(\Omega;\Msn),
\]
for which $-\dive \lambda_\delta = f$ in $\mathcal D'(\R^n;\R^n)$. Moreover, $\supp(\lambda_\delta) \subset \overline \Omega$, $\lambda_\delta \wsto \lambda$ in $\M(\R^n;\Msn)$ and $|\lambda_\delta|(\Omega)\to |\lambda|(\overline\Omega)$. Indeed, $|\lambda|(\overline\Omega) \leq \liminf_{\delta \todown 0} |\lambda_\delta|(\Omega)$ follows from the weak* lower semicontinuity of the total variation and
\begin{align*}
  \limsup_{\delta \todown 0} \, |\lambda_\delta|(\Omega)
  &= \limsup_{\delta \todown 0} \, |\bar{\lambda}^\delta + F\LL^n|(\Omega) \\
  &= \limsup_{\delta \todown 0} \, |\lambda^\delta - (F\LL^n)^\delta + F\LL^n|(\Omega) \\
  &\leq  \lim_{\delta \todown 0} \, |\lambda^\delta|(\Omega) 
    + \lim_{\delta \todown 0} \, \|F - F^\delta\|_{\Lrm^1(\Omega;\Msn)} \\
  &= |\lambda|(\overline\Omega)
\end{align*}
since the regularization of Proposition~\ref{prop:properies_mollification2} is linear and $\|F - F^\delta\|_{\Lrm^1(\Omega;\Msn)} \to 0$. 

As $\rho^\circ$ is positively $1$-homogeneous, the Reshetnyak continuity theorem (see~\cite[Theorem~10.3]{Rindler18book} or~\cite[Theorem~2.39]{AmbrosioFuscoPallara00book}) thus ensures that
\begin{equation}\label{eq:delta}
\lim_{\delta\to 0}\int_{\Omega} \rho^\circ(\lambda_\delta)\dd x=\int_{\overline\Omega}\rho^\circ\left(\frac{\di\lambda}{\di|\lambda|}\right)\dd|\lambda|.
\end{equation}
On the other hand, since $Q_{\dive}h_\e \to \rho^\circ$ pointwise as $\e \todown 0$, by the dominated convergence theorem we have
\begin{equation}\label{eq:epsilon}
\lim_{\e \todown 0}\int_{\Omega} Q_{\dive}h_\e(\lambda_{\delta})\dd x=\int_{\Omega} \rho^\circ(\lambda_\delta)\dd x.
\end{equation}
Gathering~\eqref{eq:delta},~\eqref{eq:epsilon} and using a diagonalization argument, we can find $\delta(\e) \todown 0$ such that $\lambda_{\delta(\e)}\wsto\lambda$ in $\M(\R^n;\Msn)$, $-\dive \lambda_{\delta(\e)} =f$ in $\Dcal'(\R^n;\R^n)$ and
$$\lim_{\e \todown 0}\int_{\Omega} Q_{\dive}h_\e(\lambda_{\delta(\e)})\dd x =\int_{\overline\Omega}\rho^\circ\left(\frac{\di\lambda}{\di|\lambda|}\right)\dd|\lambda|.$$

Applying now the relaxation result of Proposition~\ref{prop:KS} to the map $\lambda_{\delta(\e)} \in \Lrm^2(\Omega;\Msn)$ for fixed $\e>0$, we obtain a sequence $\{\lambda_\e^k\}_{k\in \N}$ in $\Lrm^2(\Omega;\Msn)$ such that
\[
\begin{cases}
    \lambda_\e^k \wto \lambda_{\delta(\e)} &\text{in $\Lrm^2(\Omega;\Msn)$ (thus also weakly* in $\M(\R^n;\Msn)$),} \\
    -\dive \lambda_\e^k \to f &\text{in $\Hrm^{-1}(\R^n;\R^n)$,}
  \end{cases}
\]
as $k \to \infty$, and
$$\lim_{k \to \infty} \int_\Omega h_\e(\lambda_\e^k)\dd x= \int_{\Omega} Q_{\dive}h_\e(\lambda_{\delta(\e)})\dd x.$$
Using again a diagonalization argument, we obtain a sequence $k(\e) \toup \infty$ such that with $\lambda_\e:=\lambda_\e^{k(\e)}$ it holds that
\[
\begin{cases}
    \lambda_\e \wsto \lambda &\text{in $\M(\R^n;\Msn)$,} \\
    - \dive \lambda_\e \to f &\text{in $\Hrm^{-1}(\R^n;\R^n)$,}
  \end{cases}
\]
and
$$\lim_{\e \todown 0} \int_\Omega h_\e(\lambda_\e)\dd x = \int_{\R^n} \rho^\circ \left(\frac{\di\lambda}{\di|\lambda|}\right)\dd|\lambda|,$$
which completes the proof of the proposition.
\end{proof}

\section{Proof of the main result} \label{sc:upper}

In this section, we prove the main result, Theorem~\ref{thm:conv-min}. Using the characterization of a minimizer $\mu^*$ of $\overline{\mathscr C}$ obtained in~\cite{BouchitteButtazzo01}, we construct a recovery sequence associated to $\mu^*$. The key idea is to consider a recovery sequence of the Lagrange-multiplier formulation associated to the measure $\lambda^*=\sigma^*\mu^*$ (where $\sigma^*$ is a solution of~\eqref{eq:barC}), taking as Lagrange multiplier $\kappa=\rho^\circ(\sigma^*)$. The crucial fact that makes our strategy work is that $\kappa$ turns out to be a \emph{constant} function (for a minimizer $\mu^*$) thanks to the results of~\cite{BouchitteButtazzo01}.

\begin{proof}[Proof of Theorem~\ref{thm:conv-min}]
Let $f\in \Lrm^2(\partial\Omega;\R^n)$ be such that $\int_{\partial\Omega}f\cdot r\dd\HH^{n-1}=0$ for all $r \in \Rcal$. We recall that $f$ is identified with an element of $\Hrm^{-1}(\R^n;\R^n) \cap \M(\R^n;\R^n)$. Without loss of generality, we can assume that $f  \not\equiv 0$; otherwise $\mathscr C_\e (\e^{-1}\LL^n\res \omega)=0$ for all $\omega \in \A_\e$ and 
$\overline{\mathscr C}(\mu)=0$ for all $\mu \in \M^1(\overline\Omega)$.

\emph{Step~1: Construction of a recovery sequence associated to minimizers of the limit compliance.}  Let us consider the following auxiliary minimization problem:
\begin{equation}\label{eq:Olb}
\kappa :=\inf_\lambda \setBB{\int_{\R^n} \rho^\circ\left(\frac{\di\lambda}{\di|\lambda|}\right)\dd|\lambda| }{ \lambda \in \M(\overline\Omega;\Msn),\; -\dive\lambda=f },
\end{equation}
where here and in the following all divergence constraints are understood in $\Dcal'(\R^n;\R^n)$.

According to Proposition~\ref{prop:W1pR*}, the set of competitors $\lambda$ in~\eqref{eq:Olb} is not empty. Applying the Direct Method of the Calculus of Variations together with Reshetnyak's lower semicontinuity theorem (see, for instance,~\cite[Theorem~2.38]{AmbrosioFuscoPallara00book}), we find that~\eqref{eq:Olb} is well-posed and that $\kappa<\infty$. Note also that $\kappa>0$, since otherwise every minimizer $\lambda^*$ of~\eqref{eq:Olb} would be identically zero, contradicting the divergence-constraint $-\dive\lambda^*=f$ since we are in the situation where $f \not\equiv 0$.

Since $\kappa<\infty$, according to~\cite[Theorem~2.3]{BouchitteButtazzo01} (applied to $\bar j$), the mass optimization problem~\eqref{eq:mass-opt}
has	a solution $\mu^*\in \M^1(\overline\Omega)$, i.e.,
\begin{equation} \label{eq:mustar_min}
  \overline{\mathscr C}(\mu^*) = \min_{\M^1(\overline\Omega)} \overline{\mathscr C},
\end{equation}
where
\begin{equation}  \label{eq:compliance}
  \overline{\mathscr C}(\mu) = \min_{\sigma} \setBB{\int_{\R^n} \frac12 \rho^\circ(\sigma)^2\dd\mu }{\sigma \in \Lrm^2(\R^n,\mu;\Msn),\; -\dive(\sigma\mu)=f }
\end{equation}
and
$$\overline{\mathscr C}(\mu^*) = \frac{\kappa^2}{2}.$$
Moreover, 
\begin{equation}\label{eq:minkappa}
\kappa=\min_{\sigma} \setBB{\int_{\R^n} \rho^\circ(\sigma)\dd\mu^* }{ \sigma \in \Lrm^1(\R^n,\mu^*;\Msn),\; -\dive(\sigma\mu^*)=f }
\end{equation}
and every solution $\sigma^*\in \Lrm^1(\R^n,\mu^*;\Msn)$ of~\eqref{eq:minkappa} satisfies
\begin{equation}\label{eq:kappa}
\rho^\circ(\sigma^*)=\kappa \quad\mu^*\text{-a.e.\ in }\R^n.
\end{equation}
As a consequence, $\sigma^*\in \Lrm^\infty(\R^n,\mu^*;\Msn)$ (see~\eqref{eq:jbar_est}) is also a minimizer of~\eqref{eq:compliance} for $\mu^*$ and $\lambda^*:=\sigma^*\mu^*$ is a solution of~\eqref{eq:Olb}.

\emph{Step~1a.} We first consider the Lagrange multiplier formulation. For each $\e>0$ and mass $m\in (0,1)$, we define the Kohn--Strang function $h^m_\e\colon \Msn \to \R$ by
$$h^m_\e(\tau):=
\begin{cases}
\dfrac{\e}{2}|\tau|^2+\dfrac{\kappa^2}{2m\e}& \text{if } \tau\neq 0,\\
0&\text{if } \tau=0.
\end{cases}$$
Its (symmetric) $\dive$-quasiconvex envelope $Q_{\dive}h^m_\e$ is explicitly given (see the beginning of Section~\ref{sc:relax}) for $n=2$ by
$$Q_{\dive}h^m_\e(\tau)=
\begin{cases}
\displaystyle  \frac{\e}{2}|\tau|^2+\frac{\kappa^2}{2m\e} &  \text{if } \rho^\circ(\tau) \geq \kappa/(\e\sqrt{m}),\\
\displaystyle  \frac{\kappa}{\sqrt{m}}\rho^\circ(\tau)-\e |\tau_1\tau_2| &  \text{if } \rho^\circ(\tau) < \kappa/(\e\sqrt{m}),
\end{cases}$$
and for $n=3$ by
$$Q_{\dive}h^m_\e(\tau)=
\begin{cases}
\displaystyle\frac{\e}{2}|\tau|^2+\frac{\kappa^2}{2m\e} &  \text{if } \rho^\circ(\tau) \geq \kappa/(\e\sqrt{m}),\\
\displaystyle\frac{\kappa}{\sqrt{m}}\rho^\circ(\tau)- \e|\tau_1\tau_2|&  \text{if } 
\begin{cases}
\rho^\circ(\tau)<\kappa/(\e\sqrt{m}),\\
|\tau_1|+|\tau_2|\leq |\tau_3|,
\end{cases}\\
\begin{aligned} &{\textstyle \frac{\kappa}{\sqrt{m}}}\rho^\circ(\tau) \\[-2pt] &\; + {\textstyle \frac{\e}{2}} \left({\textstyle \frac12|} \tau|^2-|\tau_1\tau_2|-|\tau_1\tau_3|-|\tau_2\tau_3|\right) \end{aligned} \hspace{-4pt} & \text{if }
\begin{cases}
\rho^\circ(\tau)<\kappa/(\e\sqrt{m}),\\
|\tau_1|+|\tau_2|> |\tau_3|.
\end{cases}
\end{cases}$$
According to Proposition~\ref{prop:Olbermann} (with $\e' := \e\sqrt{m}/\kappa$), there exists a family of maps $\{\lambda_\e^{m}\}_{\e>0} \subset \Lrm^2(\Omega;\Msn)$ such that $\lambda^m_\e \wsto \lambda^*$ in $\M(\R^n;\Msn)$ as $\e \todown 0$, 
\begin{equation}\label{eq:diveps}
\lim_{\e \todown 0}\|-\dive\lambda^m_\e-f\|_{\Hrm^{-1}(\R^n;\R^n)}=0,
\end{equation}
and
\begin{equation} \label{eq:hme_conv}
\lim_{\e \todown 0} \int_{\Omega} h^m_\e(\lambda^m_\e)\dd x=\frac{\kappa}{\sqrt{m}}\int_{\overline\Omega}\rho^\circ\left(\frac{\di\lambda^*}{\di|\lambda^*|}\right)\dd|\lambda^*|=\frac{\kappa}{\sqrt{m}}\int_{\overline\Omega}\rho^\circ(\sigma^*)\dd\mu^*,
\end{equation}
where we used the positive one-homogeneity of $\rho^\circ$.

\emph{Step~1b.} Next, we construct a measure $\tilde \mu^m_\e\in \M^+(\overline\Omega)$ and a map $\tilde\sigma^m_\e \in \Lrm^2(\R^n,\tilde\mu^m_\e;\Msn)$ which satisfy the right upper bound inequality, but fail to fulfil the mass and divergence constraints; this will be rectified in the next step. To this aim, let us define
\begin{align*}
  \tilde\omega_\e^{m} &:=\setb{x \in \Omega }{ \lambda^m_\e(x) \neq 0 }, \\
  \tilde\mu_\e^{m} &:=\frac{1}{\e}\LL^n\res\tilde\omega^m_\e\in \M^+(\R^n), \\
  \tilde \sigma^{m}_\e &:=\e \lambda^m_\e \in \Lrm^2(\R^n,\tilde \mu_\e;\Msn),
\end{align*} 
which satisfy
\begin{equation}\label{eq:1}
\int_\Omega \frac12  \left(|\tilde \sigma^m_\e|^2 + \frac{\kappa^2}{m}\right)\dd\tilde\mu^m_\e= \int_\Omega h^m_\e(\lambda^m_\e)\dd x \leq C.
\end{equation}
Clearly,
\begin{equation} \label{eq:mume_conv}
  \tilde\sigma^m_\e\tilde\mu^m_\e=\lambda^m_\e \wsto \lambda^* \quad\text{ in $\M(\R^n;\Msn)$}
\end{equation}
as $\e\to 0$. Moreover, using that $\kappa^2/m>0$, the previous energy bound additionally ensures that the family $\{\tilde \mu^m_\e\}_{\e>0}$ is bounded in $\M(\R^n)$. Up to a subsequence, we have 
\begin{equation}\label{eqconvm}
\tilde\mu^m_\e\wsto \tilde\mu^m \text{ in } \M(\R^n) 
\end{equation}
as $\e \todown 0$ for some $\tilde\mu^m \in \M^+(\overline\Omega)$. Arguing as in the proof of compactness in Proposition~\ref{prop:compactness}, we obtain that $\lambda^*$ is absolutely continuous with respect to $\tilde \mu^m$ and
\[
  \lambda^*= \tilde \sigma^m\tilde\mu^m
\]
for some $\tilde\sigma^m \in \Lrm^2(\R^n,\tilde\mu^m;\Msn)$. The measure $\tilde \mu^m$ can be decomposed as
\begin{equation} \label{eq:tildemu_decomp}
  \tilde \mu^m=\theta^m\mu^* + \tilde\nu^m,
\end{equation}
where $\theta^m\in \Lrm^1(\R^n,\mu^*)$ and $\tilde\nu^m \in \M^+(\R^n)$ is singular with respect to $\mu^*$. Thus,
$$\sigma^*\mu^*=\lambda^*=\tilde\sigma^m\tilde\mu^m= \tilde\sigma^m \theta^m\mu^* + \tilde\sigma^m\tilde \nu^m.$$
Since $\tilde\sigma^m\tilde\nu^m$ is singular with respect to $\mu^*$, we conclude $\tilde\sigma^m\tilde \nu^m=0$. As a consequence, 
\begin{equation}\label{eq:tildesigma}
\sigma^*=\tilde\sigma^m\theta^m \quad\mu^*\text{-a.e.\ in }\R^n.
\end{equation}

Passing to the upper limit as $\e \todown 0$ in~\eqref{eq:1} using~\eqref{eq:hme_conv},~\eqref{eq:tildemu_decomp},
\begin{align}
&\frac{\kappa^2}{2m}\tilde\nu^m(\R^n) +\frac{\kappa^2}{2m}\int_{\R^n}\theta^m\dd
\mu^*+ \limsup_{\e \todown 0} \int_\Omega \frac12  |\tilde\sigma^m_\e|^2\dd\tilde\mu^m_\e \notag\\
&\qquad \leq \frac{\kappa}{\sqrt{m}} \int_{\R^n} \rho^\circ(\sigma^*) \dd\mu^*. \label{eq:2}
\end{align}
On the other hand, we can pass to the lower bound via Proposition~\ref{prop:liminf} and Remark~\ref{rem:liminf}, whose assumptions are satisfied by~\eqref{eq:diveps},~\eqref{eq:mume_conv}, and ~\eqref{eqconvm}. This gives
\begin{align}
\liminf_{\e \todown 0}\int_\Omega \frac12  |\tilde\sigma^m_\e|^2\dd\tilde\mu^m_\e &\geq \int_{\R^n} \frac12  \rho^\circ(\tilde\sigma^m)^2 \dd\tilde\mu^m \notag \\
&\geq\int_{\{\theta^m>0\}} \frac{\rho^\circ(\sigma^*)^2}{2\theta^m}\dd\mu^*,  \label{eq:3}
\end{align}
where for the second inequality we employed~\eqref{eq:tildemu_decomp},~\eqref{eq:tildesigma}. Combining~\eqref{eq:2} and~\eqref{eq:3}, we arrive at
$$\frac{\kappa^2}{2m}\tilde\nu^m(\R^n) +\int_{\{\theta^m>0\}}\frac{1}{2\theta^m} \left(\frac{\kappa\theta^m}{\sqrt{m}} -\rho^\circ(\sigma^*)\right)^2\dd\mu^* \leq 0.$$
Thus, $\tilde\nu^m=0$ and, remembering~\eqref{eq:kappa}, $\theta^m=\sqrt{m}$ $\mu^*$-a.e.\ in $\{\theta^m>0\}$. Using~\eqref{eq:tildesigma} together with~\eqref{eq:kappa} again, leads to $\rho^\circ(\tilde\sigma^m)\theta^m=\rho^\circ(\sigma^*)=\kappa>0$ $\mu^*$-a.e.\ in $\R^n$, so that $\mu^*(\{\theta^m=0\})=0$. Summing up, we have shown
$$  \theta^m=\sqrt{m} \quad\text{$\mu^*$-a.e.,} \qquad
  \tilde \mu^m=\sqrt{m} \mu^*,  \qquad \tilde\nu^m=0.$$
Inserting this into~\eqref{eq:2} and using~\eqref{eq:kappa} once more, leads to
\begin{equation}\label{eq:me}
\limsup_{\e \todown 0}\int_\Omega \frac12  |\tilde\sigma^m_\e|^2\dd\tilde\mu^m_\e \leq \int_{\R^n} \frac{1}{2\sqrt{m}} \rho^\circ(\sigma^*)^2 \dd\mu^*.
\end{equation}

Since by the uniqueness of the weak* limit, there is no need of extracting a subsequence, we have thus proved that $\tilde\mu^m_\e \wsto \sqrt{m}\mu^*$ in $\M(\R^n)$ as $\e \todown 0$ and that~\eqref{eq:me} holds. Moreover, since all these measures are positive and have uniformly bounded support, we get $\tilde\mu^m_\e(\R^n)\to \sqrt{m}=\sqrt{m}\mu^*(\R^n)$.

\emph{Step~1c.}
We now modify $\tilde\omega^m_\e$ to make it admissible in the class $\A_\e$ defined in~\eqref{eq:Ae}. We first observe that since $m \in (0,1)$, there exists $\e_m>0$ such that for $\e<\e_m$,
$$\frac{\LL^n(\tilde\omega^m_\e)}{\e}\leq \frac{\sqrt{m}+1}{2}<1.$$
Let
$$E^m_\e:=\tilde\omega^m_\e \cup \partial\Omega,$$
which is a Lebesgue-measurable set satisfying $\LL^n(E^m_\e)<\e$.

Let $\mathcal B$ denote the collection of open balls in $\R^n$. It is well-known that $\mathcal B$ generates the Borel $\sigma$-algebra on $\R^n$. We may find a countable collection of open balls $B_k \in \mathcal B$, $k \in \N$, such that
\begin{equation} \label{eq:Q1}
E^m_\e \subset  \bigcup_{k=1}^\infty B_k
\end{equation}
and
\begin{equation} \label{eq:Q2}
  \LL^n \biggl( \bigcup_{k=1}^\infty B_k \biggr) < \e,   \qquad
  \LL^n \biggl( \bigcup_{k=1}^\infty B_k \setminus E^m_\e \biggr) < \frac{\e^2}{2}.
\end{equation}
Then choose $N = N(m,\e)$ large enough such that for
\[
  U^m_\e :=  \bigcup_{k=1}^N  B_k
\]
the following properties hold:
\begin{enumerate}[a)]
\item $\partial\Omega \subset U_\e^m$ (this is possible since $\{B_k\}_{k \in \mathbb N}$ is in particular an open covering of the compact set $\partial\Omega$, from which we can extract a finite subcovering);
  \item $\LL^n(U^m_\e) < \e$ (by~\eqref{eq:Q2});
  \item $\LL^n(E^m_\e \Delta U^m_\e) < \e^2$ and $\LL^n\left(\bigcup_k B_k \setminus U_\e^m \right)<\e^2$ (by~\eqref{eq:Q1},~\eqref{eq:Q2});
  \item $\displaystyle \normb{\lambda^m_\e \restrict U^m_\e - \lambda^m_\e}_{\Lrm^2(\R^n;\Msn)} < \e$ (since $\lambda^m_\e$ is concentrated in $E^m_\e$ and, by the dominated convergence theorem, $\int_{\bigcup_{k> N}  B_k} \abs{\lambda^m_\e}^2 \dd x \to 0$ as $N \to \infty$);
  \item $\displaystyle \absBB{\int_{U^m_\e} \abs{\tilde\sigma^m_\e}^2 \; \frac{\di x}{\e} - \int_{\R^n}\abs{\tilde\sigma^m_\e}^2 \dd \tilde\mu^m_\e} < \e$ (by a similar argument).
\end{enumerate}
Note that $U^m_\e$ might fail to have a  Lipschitz boundary because tangential balls have an intersection point generating a cusp.  However, there are only finitely many such singular points, so that we can add to the $\{B_k\}_{1 \leq k \leq N}$ finitely many small balls centered at these points with arbitrarily small measure. In that way, we can further assume without loss of generality that the set $U_\e^m$ has a Lipschitz boundary.

Next, we find an open set $V^m_\e \subset \Omega$ such that
\[
  \omega^m_\e:=(\Omega \cap U^m_\e) \cup V^m_\e
\]
lies in $\A_\e$, that is, $\omega^m_\e$ is a connected open set with Lipschitz boundary and
\[
  \partial\Omega \subset\partial\omega^m_\e,  \qquad
  \LL^n(\omega^m_\e)=\e.
\]
Indeed, we can construct $V^m_\e$ as the union of finitely many cylindrical ``struts'' to make the set connected and with Lipschitz boundary. This construction can be achieved with arbitrarily small added volume and by~b) there is a gap between $\LL^n(U^m_\e)$ and the target volume $\e$. The requirement that $\partial\Omega \subset\partial\omega^m_\e$ holds by construction since $\partial\Omega \subset U^m_\e$. We can add additional mass to ensure the final condition $\LL^n(\omega^m_\e)=\e$. Hence, $\omega^m_\e \in \A_\e$ is indeed an admissible shape.

Define
\[
  \mu^m_\e:=\frac{1}{\e}\LL^n\res\omega^m_\e, \qquad \sigma^m_\e:=\tilde\sigma^m_\e \restrict U^m_\e =\e \lambda_\e^m \restrict U_\e^m.
\]
Since $\{\mu^m_\e\}_{\e>0}$ is a family of probability measures supported in $\overline \Omega$, up to a subsequence, $\mu^m_\e \wsto \mu^m$ in $\M(\R^n)$ as $\e\to 0$ for some $\mu^m \in \M^1(\overline\Omega)$. We observe that for all $\phi \in \Crm_0(\R^n)$ with $\phi \geq 0$ it holds that
\begin{align*}
  &\int_{\R^n} \phi \dd (\mu^m_\e - \tilde\mu^m_\e) \\
  &\qquad = \int_{U_\e^m}\phi \dd (\mu^m_\e - \tilde\mu^m_\e) \\
  &\qquad\qquad + \int_{\bigcup_k B_k \setminus U_\e^m} \phi \dd (\mu^m_\e - \tilde\mu^m_\e) + \int_{\R^n \setminus \bigcup_k B_k} \phi \dd (\mu^m_\e - \tilde\mu^m_\e)\\
  &\qquad \geq  \frac{1}{\e}\int_{U_\e^m} (1-\chi_{E_\e^m})\phi\dd x -\frac{2}{\e} \norm{\phi}_{\Lrm^\infty(\R^n)}\LL^n\left(\bigcup_k B_k  \setminus U_\e^m\right) \\
  &\qquad\qquad + \int_{\omega_\e^m \setminus \bigcup_k B_k} \frac1\e\phi \dd x \\
  &\qquad \geq - 2\e \norm{\phi}_{\Lrm^\infty(\R^n)}
\end{align*}
by~c). Thus, since $\tilde\mu^m_\e \wsto \sqrt{m}\mu^*$, we have $\mu^m\geq \sqrt{m}\mu^*$.

Next, up to a subsequence, $\mu^m \wsto \hat \mu$ in $\M(\R^n)$ as $m\to 1$ for some $\hat \mu \in \M^1(\overline\Omega)$ with $\hat\mu\geq\mu^*$. Since $\mu^*$ and $\hat\mu$ are both probability measures, we deduce that $\hat\mu=\mu^*$. We further claim that
\begin{align}
& \displaystyle \lim_{m \to 1}\lim_{\e \todown 0}\int_{\R^n}\varphi\dd\mu^m_\e =\int_{\R^n}\varphi\dd\mu^* \quad\text{for all } \varphi \in \Crm_0(\R^n),  \label{eq:R1} \\
  & \displaystyle\lim_{m \to 1}\lim_{\e \todown 0}\|-\dive(\sigma^m_\e\mu^m_\e) - f\|_{\Hrm^{-1}(\R^n;\R^n)}=0, \label{eq:R2} \\
& \displaystyle\limsup_{m \to 1}\limsup_{\e \todown 0}\int_\Omega \frac12  |\sigma^{m}_\e|^2\dd\mu^{m}_\e  \leq \int_{\R^n} \frac12 \rho^\circ(\sigma^*)^2\dd\mu^*.  \label{eq:R3}
\end{align}
The first condition,~\eqref{eq:R1}, follows directly from the convergences above. For~\eqref{eq:R2}, we estimate for any $\psi \in \Hrm^1(\R^n;\R^n)$ with $\norm{\psi}_{\Hrm^1} \leq 1$ as follows:
\begin{eqnarray*}
  \dprb{-\dive (\sigma^m_\e\mu_\e^m) - f, \psi} & = & \dprb{-\dive (\lambda^m_\e \restrict U^m_\e) - f, \psi}\\
& = & \int_\Omega \lambda^m_\e : \nabla \psi \dd x - \dprb{f,\psi} - \int_{\Omega \setminus U^m_\e} \lambda^m_\e : \nabla \psi \dd x.
\end{eqnarray*}
Thus,
\begin{align*}
  &\sup_{\norm{\psi}_{\Hrm^1} \leq 1} \dprb{-\dive (\sigma^m_\e \mu^m_\e) - f, \psi} \\
  &\qquad \leq \sup_{\norm{\psi}_{\Hrm^1} \leq 1} \dprb{-\dive \lambda^m_\e - f, \psi} + \sup_{\norm{\psi}_{\Hrm^1} \leq 1} \absBB{\int_{\Omega \setminus U^m_\e} \lambda^m_\e \cdot \nabla \psi \dd x} \\
  &\qquad \leq \normb{-\dive \lambda^m_\e - f}_{\Hrm^{-1}(\R^n;\R^n)} + \normb{\lambda^m_\e \restrict U^m_\e - \lambda^m_\e}_{\Lrm^2(\R^n;\Msn)}
\end{align*}
and this converges to zero as $\e \todown 0$ by~\eqref{eq:diveps} and~d). Finally, for~\eqref{eq:R3}, we observe via~e) and~\eqref{eq:me},
\begin{align*}
  \limsup_{m \to 1}\limsup_{\e \todown 0} \int_\Omega \frac12 |\sigma^{m}_\e|^2\dd\mu^{m}_\e
  &= \limsup_{m \to 1}\limsup_{\e \todown 0}\int_{U^m_\e} \frac12  |\tilde\sigma^{m}_\e|^2\dd\tilde\mu^{m}_\e \\
  &\leq \limsup_{m \to 1}\limsup_{\e \todown 0}\int_\Omega \frac12  |\tilde\sigma^{m}_\e|^2\dd\tilde\mu^{m}_\e \\
  &\leq \int_{\R^n} \frac12 \rho^\circ(\sigma^*)^2\dd\mu^*.
\end{align*}

Since $\M(\R^n)$ is the dual of the separable space $\Crm_0(\R^n)$, we can apply a diagonalization argument to show the existence of $m(\e) \toup 1$ such that for
\begin{align*}
  \omega^*_\e &:=\omega_\e^{m(\e)} \in \A_\e, \\
  \mu^*_\e &:=\mu_\e^{m(\e)} =\frac{1}{\e}\LL^n\res\omega^*_\e \in \M^+(\R^n), \\
  \hat \sigma_\e &:=\sigma_\e^{m(\e)} \in \Lrm^2(\R^n,\mu^*_\e;\R^n),
\end{align*}
we have
\[
\begin{cases}
  \mu^*_\e \wsto \mu^*  &\text{in } \M(\R^n), \\
  -\dive(\hat \sigma_\e\mu^*_\e) \to f  &\text{in }\Hrm^{-1}(\R^n;\R^n),
  \end{cases}
\]
and
\begin{equation}\label{eq:C2}
\limsup_{\e \todown 0}\int_\Omega \frac12  |\hat \sigma_\e|^2\dd\mu^*_\e \leq \int_{\R^n} \frac12  \rho^\circ(\sigma^*)^2 \dd\mu^*=\overline{\mathscr C}(\mu^*).
\end{equation}

\emph{Step~1d.} It finally remains to modify the stress $\hat \sigma_\e$ in order to satisfy the hard divergence-constraint. We first notice that $f+\dive(\hat \sigma_\e\mu^*_\e) \in \Hrm^{-1}(\R^n;\R^n)$ has support in $\overline{\omega^*_\e}$ (because $\supp(f)\subset \partial\Omega \subset\partial\omega^*_\e$) and $\langle f+\dive(\hat\sigma_\e\mu^*_\e),r\rangle=0$ for all $r \in \Rcal$. Since $\omega^*_\e$ is a bounded Lipschitz domain, Proposition~\ref{prop:W1pR*} ensures the existence of $F_\e \in \Lrm^2(\omega^*_\e;\Msn)$ such that 
$$\dprb{ f+\dive(\hat \sigma_\e\mu^*_\e),z }=\int_{\omega^*_\e} F_\e:e(z)\dd x, \qquad z \in \Hrm^1(\R^n;\R^n)$$
and
$$\|F_\e\|_{\Lrm^2(\omega^*_\e;\Msn)}=\|f+\dive(\hat\sigma_\e\mu^*_\e)\|_{\Hrm^{-1}(\R^n;\R^n)}.$$
We now set
\[
  \sigma^*_\e:=\hat\sigma_\e+ \e F_\e\chi_{\omega^*_\e} \in \Lrm^2(\R^n,\mu^*_\e;\Msn).
\]
By construction we have 
$$-\dive(\sigma^*_\e\mu^*_\e)=-\dive(\hat\sigma_\e\mu^*_\e)-\dive(F_\e\chi_{\omega^*_\e})=f \qquad \text{in }\Dcal'(\R^n;\R^n).$$
Also,
$$\int_{\R^n} |\e F_\e \chi_{\omega^*_\e}|^2\dd\mu^*_\e=\e \int_{\omega^*_\e} |F_\e|^2\dd x \to 0$$
and hence, using~\eqref{eq:C2},
\begin{equation} \label{eq:C3}
  \limsup_{\e \todown 0} \int_\Omega \frac12 |\sigma^*_\e|^2\dd\mu^*_\e = \limsup_{\e \todown 0}\int_\Omega \frac12 |\hat \sigma_\e|^2\dd\mu^*_\e \leq\overline{\mathscr C}(\mu^*).
\end{equation}

\emph{Step~1e.} We are now in position to conclude the upper bound. Indeed, we have
\[
  \inf_{\M^1(\overline\Omega)} \mathscr C_\e \leq \mathscr C_\e(\mu^*_\e),
\]
and passing to the limit as $\e \todown 0$ using~\eqref{eq:C3}, we get
\begin{equation} \label{eq:Ce_lower}
  \limsup_{\e \todown 0}\inf_{\M^1(\overline\Omega)} \mathscr C_\e \leq \limsup_{\e \todown 0}\mathscr C_\e(\mu^*_\e) \leq \overline{\mathscr C}(\mu^*).
\end{equation}
According to the lower bound established in Proposition~\ref{prop:liminf}, we also have that
$$\overline{\mathscr C}(\mu^*) \leq \liminf_{\e \todown 0}\mathscr C_\e(\mu^*_\e),$$
hence, recalling~\eqref{eq:mustar_min},
\begin{equation}\label{eq:recovery}
\lim_{\e \todown 0}\mathscr C_\e(\mu^*_\e)=\overline{\mathscr C}(\mu^*)= \min_{\M^1(\overline\Omega)}\overline{\mathscr C}.
\end{equation}

\emph{Step~2: Compactness and lower bound for the compliance.} 
We first notice that, by~\eqref{eq:Ce_lower} and~\eqref{eq:recovery}, the infimal value
$$\inf_{\M^1(\overline\Omega)} \mathscr C_\e$$
is finite, and uniformly bounded with respect to $\e$. 
Let $\{\alpha_\e\}_{\e>0}$ be such that $\alpha_\e \todown 0$ and for $\e>0$ assume we are given $\omega_\e \in \A_\e$ with
\[
  \mathscr C_\e \biggl( \frac{\LL^n\res \omega_\e}{\e} \biggr) \leq \inf_{\omega\in\A_\e} \mathscr C_\e \biggl( \frac{\LL^n\res \omega}{\e} \biggr)  + \alpha_\e.
\]
Let us define the probability measures $\bar \mu_\e :=\frac{1}{\e}\LL^n\res\omega_\e$, which thus satisfy
\[
\mathscr C_\e(\bar\mu_\e) \leq \inf_{\M^1(\overline\Omega)} \mathscr C_\e + \alpha_\e \leq \mathscr C_\e(\mu_\e^*) + \alpha_\e.
\]
Extract a subsequence $\{\e_k\}_{k\in \N}$ with $\e_k \todown 0$ such that $\bar\mu_{\e_k}\wsto \bar\mu$ in $\M(\R^n)$ for some $\bar\mu \in \M^1(\overline\Omega)$ and
$$\lim_{k\to \infty}\mathscr C_{\e_k}(\bar\mu_{\e_k})=\liminf_{\e \todown 0}\mathscr C_\e(\bar\mu_\e).$$
Using  the lower bound inequality established in Proposition~\ref{prop:liminf}, we infer that
\begin{equation}\label{eq:Ce_lower2}
\lim_{\e \todown 0}\mathscr C_\e(\mu^*_\e)\geq \liminf_{\e \todown 0}\inf_{\M^1(\overline\Omega)}{\mathscr C}_\e \geq\liminf_{\e \todown 0}\mathscr C_\e(\bar\mu_\e)=\lim_{k\to \infty}\mathscr C_{\e_k}(\bar\mu_{\e_k}) \geq  \overline{\mathscr C}(\bar\mu).
\end{equation}

\emph{Step~3: Proof of Theorem~\ref{thm:conv-min}.}
Combining the information of~\eqref{eq:recovery} with~\eqref{eq:Ce_lower2}  yields
\[
  \overline{\mathscr C}(\bar\mu)\leq\overline{\mathscr C}(\mu^*)
  = \min_{\mu \in \M^1(\overline\Omega)} \overline{\mathscr C}(\mu),
\]
which shows that $\bar\mu$ is a minimizer of~\eqref{eq:mass-opt}. Next,~\eqref{eq:Ce_lower},~\eqref{eq:recovery} and~\eqref{eq:Ce_lower2} 
together show that
$$\min_{\M^1(\overline\Omega)} \overline{\mathscr C} =\overline{\mathscr C}(\bar\mu)=\overline{\mathscr C}(\mu^*)=\lim_{\e \todown 0} \mathscr C_\e(\mu_\e^*)=\lim_{k\to \infty}\mathscr C_{\e_k}(\bar\mu_{\e_k})=\lim_{\e \todown 0}\inf_{\M^1(\overline\Omega)} \mathscr C_\e,$$
which completes the proof of Theorem~\ref{thm:conv-min}.
\end{proof}

\appendix

\section{Laminations leading to the infinitesimal-mass integrand}\label{ax:ex}

For the purpose of illustration, we now give a typical shape that is optimal for the elastic compliance in the vanishing-mass limit. This construction corresponds to the Allaire--Kohn laminate of order $2$ that was given in~\cite{AllaireKohn93} as the optimal microstructure for the Hashin--Shtrikman lower bound. It does not rely on a separation of scales, but rather on the linear superposition of single scale laminates with different proportions; see also~\cite{BourdinKohn08} for a construction in the high-porosity regime. This construction in particular motivates the precise form of $\bar j^*$ in~\eqref{eq:jbs_elast}.

We start with the two-dimensional case.

\begin{example}
Set $\Omega:=Q=(0,1)^2$ and define for $\alpha_1,\alpha_2 \in \R$ with $|\alpha_1|\leq|\alpha_2|$,
\[
\mu:=\LL^2\res Q, \qquad
\sigma:=
\begin{pmatrix}
	\alpha_1 & 0 \\
	0 & \alpha_2
\end{pmatrix}.
\]
In particular, $\dive(\sigma\mu)=0$ in $Q$.

Let $\gamma\in(0,1)$, to be chosen later depending on $\alpha_1$ and $\alpha_2$. Let $k(\e) \in \N$ be such that
\[
  \lim_{\e \to 0} \frac{\e}{k(\e)} = 0.
\]
For $k \in \N$ we define the sets
\begin{align*}
\displaystyle D_\e^1 &:= \bigcup_{i=0}^{k(\e)-1} \Bigl( \frac{i}{k(\e)}, \frac{i + \gamma \e}{k(\e)} \Bigr) \times (0,1),  \\
\displaystyle D_\e^2 &:=(0,1)\times\bigcup_{i=0}^{k(\e)-1} \Bigl( \frac{i}{k(\e)}, \frac{i + (1-\gamma) \e}{k(\e)} \Bigr),\\
D_\e &:=D_\e^1\cup D_\e^2,	
\end{align*}
so that 
\[
  \LL^2(D_\e^1)=\gamma\e,\qquad \LL^2(D_\e^2)=(1-\gamma)\e,\qquad \LL^2(D_\e)=\e - \gamma(1-\gamma)\e^2.
\]
We set
\[
  \mu_\e:=\frac{1}{\e}\LL^2\res D_\e,\qquad
\sigma_\e:=
\begin{pmatrix}\displaystyle
	\frac{\alpha_1}{1-\gamma}\chi_{D^2_\e} & 0 \\
	0 & \dfrac{\alpha_2}{\gamma}\chi_{D_\e^1} 
\end{pmatrix}.
\]
Then, since $\chi_{D_\e^1}$ depends only on $x_1$ and $\chi_{D_\e^2}$ depends only on $x_2$, we have that
\[
  \dive(\sigma_\e\mu_\e)=0.
\]
The mass constraint is satisfied up to a perturbation $\SmallO(\e)$ because $\mu_\e(Q)=\LL^2(D_\e)=\e+\SmallO(\e)$. It is nevertheless possible to adjust the definitions of the sets $D_\e^1$ and $D_\e^2$ in order to have exactly $\LL^2(D_\e)=\e$ in place of $\LL^2(D_\e)=\e+\SmallO(\e)$, and also $\partial Q\subset D_\e$. However, we prefer to keep the current definitions to avoid additional technicalities.

It is easy to see that
\[
\mu_\e\wsto\mu\quad\text{ in }\M(\R^2),\qquad \sigma_\e\mu_\e \wsto \sigma\mu \quad \text{ in }\M(\R^2;\Mstwo)
\]
and that the energy can be computed as
\begin{align*}
\mathscr E_\e(\sigma_\e,\mu_\e) &= \int_{D_\e}\frac{|\sigma_\e|^2}{2\e} \dd x \\
&= \frac{1}{2\e} \biggl( \frac{\alpha_{1}^2}{(1-\gamma)^2} \LL^2(D_\e^2) +
\frac{\alpha_2^2}{\gamma^2} \LL^2(D_\e^1) \biggr) \\
&= \frac12\left(\frac{\alpha_1^2}{1-\gamma}+\frac{\alpha_2^2}{\gamma}\right).
\end{align*}
Minimizing this expression with respect to $\gamma\in (0,1)$, we find that the optimal $\gamma$ is
$$\gamma=\frac{|\alpha_2|}{|\alpha_1|+|\alpha_2|}$$
if $|\alpha_1|\neq 0$, while $\gamma:=1-\e$ if $|\alpha_1|= 0$,
and then, for this choice of $\gamma$,
$$\lim_{\e\to0} \mathscr E_\e(\sigma_\e,\mu_\e)=\frac12(|\alpha_1|+|\alpha_2|)^2 = \bar j^*(\sigma) = \int_Q \bar j^*(\sigma) \dd \mu = \overline{\mathscr E}(\sigma,\mu).$$
This shows that $(\sigma_\e,\mu_\e)$ is a recovery sequence for $(\sigma,\mu)$ and indeed the integrand $j^*(\tau) = \frac12 |\tau|^2$ relaxes to $\bar j^*(\tau)$ in the vanishing-mass limit, at least in the present situation of a constant stress $\sigma$.
\end{example}

In the three-dimensional case, we need to distinguish two cases.

\begin{example}
Set $\Omega:=Q=(0,1)^3$ and define for $\alpha_1,\alpha_2,\alpha_3 \in \R$ with $0 < |\alpha_1|\leq|\alpha_2|\leq|\alpha_3|$ (if $\alpha_1 = 0$, we are again in the two-dimensional case),
\[
\mu:=\LL^3\res Q, \qquad
\sigma:= \begin{pmatrix}
    \alpha_1 & 0 & 0\\
    0 & \alpha_2 & 0\\
    0 & 0 & \alpha_3
  \end{pmatrix}.
\]
In particular, $\dive(\sigma\mu)=0$ in $Q$. 

\medskip

\textit{Case I.} Assume that
\[
  \abs{\alpha_3} \geq \abs{\alpha_1} + \abs{\alpha_2}.
\]
Let
\begin{equation} \label{eq:gamma}
  \gamma := \frac{\abs{\alpha_2}}{\abs{\alpha_1} + \abs{\alpha_2}}  \in (0,1).
\end{equation}
For $\e > 0$ small and $k \in \N$ we define, with $k(\e)$ defined as in the previous example,
\begin{align*}
  D^1_\e &:=  \bigcup_{i=0}^{k(\e)-1} \Bigl( \frac{i}{k(\e)}, \frac{i + \gamma \e}{k(\e)} \Bigr) \times (0,1)^2 , \\
  D^2_\e &:=   \bigcup_{i=0}^{k(\e)-1} \; (0,1) \times \Bigl( \frac{i}{k(\e)}, \frac{i + (1-\gamma) \e}{k(\e)} \Bigr) \times (0,1),\\
  D_\e &:=D_\e^1\cup D_\e^2.
\end{align*}
Then,
\[
  \LL^3(D_\e^1)=\gamma\e,\qquad \LL^3(D_\e^2)=(1-\gamma)\e,\qquad \LL^3(D_\e)=\e-\gamma(1-\gamma)\e^2.
\]
We set
\[
  \mu_\e := \frac{1}{\e}\LL^3\res D_\e, \qquad
  \sigma_\e := \begin{pmatrix}
    \dfrac{\alpha_1}{1-\gamma} \chi_{D^2_\e} & 0 & 0\\
   0 & \dfrac{\alpha_2}{\gamma} \chi_{D^1_\e} & 0\\
    0 & 0 & \alpha_3 \chi_{D^1_\e \cup D^2_\e}
  \end{pmatrix},
\]
which satisfy
\[
\mu_\e\wsto\mu\quad\text{ in }\M(\R^3),\qquad \sigma_\e\mu_\e \wsto \sigma\mu \quad \text{ in }\M(\R^3;\mathbb M^{3 \times 3}_{\rm sym})
\]
and
\[
  \dive(\sigma_{\e}\mu_{\e})=0.
\]
Now calculate for the energy
\begin{align*}
  \mathscr E_\e(\sigma_\e,\mu_\e) &=\int_{D_\e} \frac12 \abs{\sigma_\e}^2 \; \frac{\di x}{\e}\\
  &= \frac{1}{2\e} \biggl( \frac{\alpha_1^2}{(1-\gamma)^2}  \LL^3(D^2_\e) + \frac{\alpha_2^2}{\gamma^2}\LL^3(D^1_\e)  + \alpha_3^2  \LL^3(D^1_\e \cup D^2_\e) \biggr) \\
  &=\frac12 \biggl( \frac{\alpha_1^2}{1-\gamma} + \frac{\alpha_2^2}{\gamma} + \alpha_3^2 \biggr) + \BigO(\e)  \\
  &=\frac12 \bigl( (\abs{\alpha_1} + \abs{\alpha_2})^2 + \abs{\alpha_3}^2 \bigr) + \BigO(\e)  \\
  &= \bar{j}^*(\sigma) + \BigO(\e) \\
  &= \overline{\mathscr E}(\sigma,\mu) + \BigO(\e).
\end{align*}
This justifies the form of $\bar{j}^*$ in this case.

\medskip

\textit{Case II.} We now assume
\begin{equation} \label{eq:ex3cond}
  \abs{\alpha_3} < \abs{\alpha_1} + \abs{\alpha_2}.
\end{equation}
Let
\begin{align*}
  \gamma_1 &:= \frac{\abs{\alpha_2}+\abs{\alpha_3}-\abs{\alpha_1}}{\abs{\alpha_1} + \abs{\alpha_2} + \abs{\alpha_3}}  \in (0,1), \\
  \gamma_2 &:= \frac{\abs{\alpha_1}+\abs{\alpha_3}-\abs{\alpha_2}}{\abs{\alpha_1} + \abs{\alpha_2} + \abs{\alpha_3}}  \in (0,1), \\
  \gamma_3 &:= 1-\gamma_1-\gamma_2=\frac{\abs{\alpha_1}+\abs{\alpha_2}-\abs{\alpha_3}}{\abs{\alpha_1} + \abs{\alpha_2} + \abs{\alpha_3}}  \in (0,1)
\end{align*}
and
\begin{align*}
  D^1_\e &:= \bigcup_{i=0}^{k(\e)-1}\Bigl( \frac{i}{k(\e)}, \frac{i + \gamma_1 \e}{k(\e)} \Bigr) \times (0,1)^2, \\
  D^2_\e &:= \bigcup_{i=0}^{k(\e)-1}\; (0,1) \times \Bigl( \frac{i}{k(\e)}, \frac{i + \gamma_2 \e}{k(\e)} \Bigr) \times (0,1) , \\
  D^3_\e &:= \bigcup_{i=0}^{k(\e)-1}\; (0,1)^2 \times \Bigl( \frac{i}{k(\e)}, \frac{i + \gamma_3 \e}{k(\e)} \Bigr), \\
  D_\e  &:= D^1_\e \cup D^2_\e \cup D^3_\e.
\end{align*}
One computes, for $1 \leq i \neq j \leq 3$,
$$\LL^3(D^i_\e) = \gamma_i \e, \quad
  \LL^3(D^i_\e \cap D^j_\e) = \gamma_i \gamma_j \e^2, \quad
  \LL^3(D^1_\e \cap D^2_\e \cap D^3_\e) = \gamma_1\gamma_2 \gamma_3 \e^3,
 $$ 
 and thus
\begin{align*}
& \LL^3(D^i_\e \cup D^j_\e)  =  (\gamma_i + \gamma_j) \e - \gamma_i \gamma_j \e^2, \\
& \LL^3(D_\e)  =  \e -\big(\gamma_1\gamma_2+\gamma_2\gamma_3+\gamma_1\gamma_3\big)\e^2 + 2\gamma_1\gamma_2\gamma_3\e^3.
\end{align*}
Set
\[
  \mu_\e := \frac{1}{\e}\LL^3\res D_\e
\]
and
\[
  \sigma_\e := \begin{pmatrix}
    \dfrac{\alpha_1}{1-\gamma_1} \chi_{D^2_\e \cup D^3_\e} & 0 & 0 \\
   0 & \dfrac{\alpha_2}{1-\gamma_2} \chi_{D^1_\e \cup D^3_\e} & 0 \\
    0 & 0 & \dfrac{\alpha_3}{1-\gamma_3} \chi_{D^1_\e \cup D^2_\e}
  \end{pmatrix},
\]
which satisfy
\[
\mu_\e\wsto\mu\quad\text{ in }\M(\R^3),\qquad \sigma_\e\mu_\e \wsto \sigma\mu \quad \text{ in }\M(\R^3;\mathbb M^{3 \times 3}_{\rm sym})
\]
and
\[
\dive(\sigma_{\e}\mu_{\e})=0.
\]

Then, the energy is now given by
\begin{align*}
  \mathscr E_\e(\sigma_\e,\mu_\e) &= \int_{D_\e} \frac12 \abs{\sigma_\e}^2 \; \frac{\di x}{\e} \\
  &= \frac{1}{2\e} \biggl( \frac{\alpha_1^2}{(1-\gamma_1)^2}  \LL^3(D^2_\e \cup D^3_\e) + \frac{\alpha_2^2}{(1-\gamma_2)^2}  \LL^3(D^1_\e \cup D^3_\e) \\
  &\qquad\qquad + \frac{\alpha_3^2}{(1-\gamma_3)^2} \LL^3(D^1_\e \cup D^2_\e) \biggr) \\
  &= \frac12 \biggl( \frac{\alpha_1^2}{1-\gamma_1} + \frac{\alpha_2^2}{1-\gamma_2} + \frac{\alpha_3^2}{1-\gamma_3} \biggr)  + \BigO(\e)  \\
  &= \frac{1}{4} \bigl(\abs{\alpha_1} + \abs{\alpha_2} + \abs{\alpha_3}\bigr)^2  + \BigO(\e) \\
  &=\bar{j}^*(\sigma) + \BigO(\e) \\
  &= \overline{\mathscr E}(\sigma,\mu) + \BigO(\e).
\end{align*}
so again we have established the form of $\bar{j}^*$ in the present case. We finally note that if equality holds in~\eqref{eq:ex3cond}, then the constructions of Case~II is the same as the one in Case~I; correspondingly, in this situation also the two cases in the expression~\eqref{eq:jbs_elast} for $\bar j^*(\sigma)$ agree.
\end{example}


\begin{thebibliography}{10}

\bibitem{Allaire02book}
{\sc G.~Allaire}, {\em {Shape optimization by the homogenization method}},
  vol.~146 of Applied Mathematical Sciences, Springer, 2002.

\bibitem{AllaireBonnetierFrancfortJouve97}
{\sc G.~Allaire, E.~Bonnetier, G.~Francfort, and F.~Jouve}, {\em Shape
  optimization by the homogenization method}, Numer. Math., 76 (1997),
  pp.~27--68.

\bibitem{AllaireKohn93}
{\sc G.~Allaire and R.~V. Kohn}, {\em Optimal design for minimum weight and
  compliance in plane stress using extremal microstructures}, European J. Mech.
  A Solids, 12 (1993), pp.~839--878.

\bibitem{AmbrosioFuscoPallara00book}
{\sc L.~Ambrosio, N.~Fusco, and D.~Pallara}, {\em {Functions of bounded
  variation and free-discontinuity problems}}, Oxford Mathematical Monographs,
  Oxford University Press, 2000.

\bibitem{AnsiniDalMasoZeppieri14}
{\sc N.~Ansini, G.~Dal~Maso, and C.~I. Zeppieri}, {\em New results on
  {$\Gamma$}-limits of integral functionals}, Ann. Inst. H. Poincar\'{e} Anal.
  Non Lin\'{e}aire, 31 (2014), pp.~185--202.

\bibitem{AnsiniGarroni07}
{\sc N.~Ansini and A.~Garroni}, {\em {$\Gamma$}-convergence of functionals on
  divergence-free fields}, ESAIM Control Optim. Calc. Var., 13 (2007),
  pp.~809--828.

\bibitem{ArroyoRabasa19?}
{\sc A.~Arroyo-Rabasa}, {\em Characterization of generalized young measures
  generated by {$\mathcal A$}-free measures}, Arch. Ration. Mech. Anal., 242
  (2021), pp.~235--325.

\bibitem{ArroyoRabasaDePhilippisHirschRindler19}
{\sc A.~{Arroyo-Rabasa}, G.~{De Philippis}, J.~Hirsch, and F.~Rindler}, {\em
  Dimensional estimates and rectifiability for measures satisfying linear {PDE}
  constraints}, Geom. Funct. Anal., 29 (2019), pp.~639--658.

\bibitem{BabadjianIurlanoRindler19?}
{\sc J.-F. Babadjian, F.~Iurlano, and F.~Rindler}, {\em Concentration versus
  oscillation effects in brittle damage}, Comm. Pure Appl. Math., 74 (2021),
  pp.~1803--1854.

\bibitem{Bouchitte03}
{\sc G.~Bouchitt\'{e}}, {\em Optimization of light structures: the vanishing
  mass conjecture}, in Homogenization, 2001 ({N}aples), vol.~18 of GAKUTO
  Internat. Ser. Math. Sci. Appl., Gakk\={o}tosho, Tokyo, 2003, pp.~131--145.
\newblock available as arXiv:2001.02022.

\bibitem{BouchitteButtazzo01}
{\sc G.~Bouchitt\'{e} and G.~Buttazzo}, {\em Characterization of optimal shapes
  and masses through {M}onge-{K}antorovich equation}, J. Eur. Math. Soc.
  (JEMS), 3 (2001), pp.~139--168.

\bibitem{BouchitteFragalaSeppecher11}
{\sc G.~Bouchitt\'{e}, I.~Fragal\`a, and P.~Seppecher}, {\em Structural
  optimization of thin elastic plates: the three dimensional approach}, Arch.
  Ration. Mech. Anal., 202 (2011), pp.~829--874.

\bibitem{BouchitteGangboSeppecher08}
{\sc G.~Bouchitt\'{e}, W.~Gangbo, and P.~Seppecher}, {\em Michell trusses and
  lines of principal action}, Math. Models Methods Appl. Sci., 18 (2008),
  pp.~1571--1603.

\bibitem{BourdinKohn08}
{\sc B.~Bourdin and R.~V. Kohn}, {\em Optimization of structural topology in
  the high-porosity regime}, J. Mech. Phys. Solids, 56 (2008), pp.~1043--1064.

\bibitem{BraidesFonsecaLeoni00}
{\sc A.~Braides, I.~Fonseca, and G.~Leoni}, {\em {$\mathcal A$}-quasiconvexity:
  relaxation and homogenization}, ESAIM Control Optim. Calc. Var., 5 (2000),
  pp.~539--577.

\bibitem{ContiMullerOrtiz20}
{\sc S.~Conti, S.~M\"{u}ller, and M.~Ortiz}, {\em Symmetric div-quasiconvexity
  and the relaxation of static problems}, Arch. Ration. Mech. Anal., 235
  (2020), pp.~841--880.

\bibitem{Dacorogna08book}
{\sc B.~Dacorogna}, {\em {Direct methods in the calculus of variations}},
  vol.~78 of Applied Mathematical Sciences, Springer, 2nd~ed., 2008.

\bibitem{DalMaso93book}
{\sc G.~Dal~Maso}, {\em {An introduction to $\varGamma$-convergence}}, vol.~8
  of Progress in Nonlinear Differential Equations and their Applications,
  Birkh\"{a}user Boston, Inc., Boston, MA, 1993.

\bibitem{DeOliveira71}
{\sc G.~N. de~Oliveira}, {\em Matrices with prescribed principal elements and
  singular values}, Canad. Math. Bull., 14 (1971), pp.~247--249.

\bibitem{DePhilippisRindler16}
{\sc G.~{De Philippis} and F.~Rindler}, {\em On the structure of
  $\mathcal{A}$-free measures and applications}, Ann. of Math., 184 (2016),
  pp.~1017--1039.

\bibitem{DePhilippisRindler17}
\leavevmode\vrule height 2pt depth -1.6pt width 23pt, {\em Characterization of
  generalized {Young} measures generated by symmetric gradients}, Arch. Ration.
  Mech. Anal., 224 (2017), pp.~1087--1125.

\bibitem{EkelandTemam76book}
{\sc I.~Ekeland and R.~Temam}, {\em {Convex analysis and variational
  problems}}, North-Holland, 1976.

\bibitem{ErnGuermond}
{\sc A.~Ern and J.-L. Guermond}, {\em Mollification in strongly {L}ipschitz
  domains with application to continuous and discrete de {R}ham complexes},
  Comput. Methods Appl. Math., 16 (2016), pp.~51--75.

\bibitem{Evans10book}
{\sc L.~C. Evans}, {\em {Partial differential equations}}, vol.~19 of Graduate
  Studies in Mathematics, American Mathematical Society, 2nd~ed., 2010.

\bibitem{FonsecaMuller99}
{\sc I.~Fonseca and S.~M{\"u}ller}, {\em {$\mathcal{A}$}-quasiconvexity, lower
  semicontinuity, and {Y}oung measures}, SIAM J. Math. Anal., 30 (1999),
  pp.~1355--1390.

\bibitem{GilbargTrudinger98book}
{\sc D.~Gilbarg and N.~S. Trudinger}, {\em {Elliptic partial differential
  equations of second order}}, vol.~224 of Grundlehren der mathematischen
  Wissenschaften, Springer, 1998.

\bibitem{HofmannMitreaTaylor07}
{\sc S.~Hofmann, M.~Mitrea, and M.~Taylor}, {\em Geometric and transformational
  properties of {L}ipschitz domains, {S}emmes-{K}enig-{T}oro domains, and other
  classes of finite perimeter domains}, J. Geom. Anal., 17 (2007),
  pp.~593--647.

\bibitem{Horgan95}
{\sc C.~O. Horgan}, {\em Korn's inequalities and their applications in
  continuum mechanics}, SIAM Rev., 37 (1995), pp.~491--511.

\bibitem{KohnStrang86_1}
{\sc R.~V. Kohn and G.~Strang}, {\em Optimal design and relaxation of
  variational problems. {I}}, Comm. Pure Appl. Math., 39 (1986), pp.~113--137.

\bibitem{KohnStrang86_2}
\leavevmode\vrule height 2pt depth -1.6pt width 23pt, {\em Optimal design and
  relaxation of variational problems. {II}}, Comm. Pure Appl. Math., 39 (1986),
  pp.~139--182.

\bibitem{KohnStrang86_3}
\leavevmode\vrule height 2pt depth -1.6pt width 23pt, {\em Optimal design and
  relaxation of variational problems. {III}}, Comm. Pure Appl. Math., 39
  (1986), pp.~353--377.

\bibitem{KristensenRaita19?}
{\sc J.~Kristensen and B.~Raita}, {\em Oscillation and concentration in
  sequences of {PDE} constrained measures}.
\newblock arXiv:1912.09190, 2019.

\bibitem{KristensenRindler10}
{\sc J.~Kristensen and F.~Rindler}, {\em Characterization of generalized
  gradient {Young} measures generated by sequences in {W$\sp{1,1}$} and {BV}},
  Arch. Ration. Mech. Anal., 197 (2010), pp.~539--598.
\newblock Erratum: Vol. 203 (2012), 693-700.

\bibitem{LewickaMuller11}
{\sc M.~Lewicka and S.~M\"{u}ller}, {\em The uniform {K}orn-{P}oincar\'{e}
  inequality in thin domains}, Ann. Inst. H. Poincar\'{e} Anal. Non
  Lin\'{e}aire, 28 (2011), pp.~443--469.

\bibitem{LewickaMuller16}
\leavevmode\vrule height 2pt depth -1.6pt width 23pt, {\em On the optimal
  constants in {K}orn's and geometric rigidity estimates, in bounded and
  unbounded domains, under {N}eumann boundary conditions}, Indiana Univ. Math.
  J., 65 (2016), pp.~377--397.

\bibitem{Michell04}
{\sc A.~Michell}, {\em The limits of economy of material in frame-structures},
  Phil. Mag., 8 (1904), pp.~589--597.

\bibitem{Murat78}
{\sc F.~Murat}, {\em Compacit\'e par compensation}, Ann. Sc. Norm. Super. Pisa
  Cl. Sci., 5 (1978), pp.~489--507.

\bibitem{Murat79}
\leavevmode\vrule height 2pt depth -1.6pt width 23pt, {\em Compacit\'e par
  compensation. {II}}, in Proceedings of the {I}nternational {M}eeting on
  {R}ecent {M}ethods in {N}onlinear {A}nalysis ({R}ome, 1978), Pitagora
  Editrice Bologna, 1979, pp.~245--256.

\bibitem{Murat81}
\leavevmode\vrule height 2pt depth -1.6pt width 23pt, {\em Compacit\'e par
  compensation: condition n\'ecessaire et suffisante de continuit\'e faible
  sous une hypoth\`ese de rang constant}, Ann. Sc. Norm. Super. Pisa Cl. Sci.,
  8 (1981), pp.~69--102.

\bibitem{Nitsche81}
{\sc J.~A. Nitsche}, {\em On {K}orn's second inequality}, RAIRO Anal.
  Num\'{e}r., 15 (1981), pp.~237--248.

\bibitem{Olbermann17}
{\sc H.~Olbermann}, {\em Michell trusses in two dimensions as a {$\Gamma
  $}-limit of optimal design problems in linear elasticity}, Calc. Var. Partial
  Differential Equations, 56 (2017), pp.~Paper No. 166, 40.

\bibitem{Olbermann20}
\leavevmode\vrule height 2pt depth -1.6pt width 23pt, {\em Michell truss type
  theories as a {$\Gamma$}-limit of optimal design in linear elasticity}, Adv.
  Calc. Var., to appear (2020).

\bibitem{Palombaro10}
{\sc M.~Palombaro}, {\em {$\text{Rank-}(n-1)$} convexity and quasiconvexity for
  divergence free fields}, Adv. Calc. Var., 3 (2010), pp.~279--285.

\bibitem{Rindler18book}
{\sc F.~Rindler}, {\em {Calculus of Variations}}, Universitext, Springer, 2018.

\bibitem{Rockafellar70book}
{\sc R.~T. Rockafellar}, {\em {Convex Analysis}}, vol.~28 of Princeton
  Mathematical Series, Princeton University Press, 1970.

\bibitem{Sing76}
{\sc F.~Sing}, {\em Some results on matrices with prescribed diagonal elements
  and singular values}, Canad. Math. Bull., 19 (1976), pp.~89--92.

\bibitem{Tartar79}
{\sc L.~Tartar}, {\em Compensated compactness and applications to partial
  differential equations}, in Nonlinear analysis and mechanics: {H}eriot-{W}att
  {S}ymposium, {V}ol. {IV}, vol.~39 of Res. Notes in Math., Pitman, 1979,
  pp.~136--212.

\bibitem{Temam84book}
{\sc R.~Temam}, {\em {Navier-Stokes equations}}, vol.~2 of Studies in
  Mathematics and its Applications, North-Holland, 3rd~ed., 1984.

\bibitem{Ziemer89book}
{\sc W.~P. Ziemer}, {\em {Weakly differentiable functions}}, vol.~120 of
  Graduate Texts in Mathematics, Springer, 1989.

\end{thebibliography}
\end{document}